\newtheorem{thm}{Theorem}[section]
\newtheorem{cor}[thm]{Corollary}
\newtheorem{lem}[thm]{Lemma}
\newtheorem{prop}[thm]{Proposition}
\theoremstyle{definition}
\theoremstyle{remark}
\numberwithin{equation}{section}
\newcommand{\mbb}{\mathbb}
\newcommand{\ra}{\rightarrow}
\newcommand{\ep}{\epsilon}
\newcommand{\no}{\noindent}
\newcommand{\al}{\alpha}
\newcommand{\De}{\Delta}
\newcommand{\ti}{\tilde}
\newcommand{\la}{\lambda}
\newcommand{\ka}{\kappa}
\newcommand{\norm}[1]{\left\Vert#1\right\Vert}
\newcommand{\Aut}{{\rm Aut}(\mathbb C^k)}
\begin{document}
\title{Some aspects of shift--like automorphisms of $\mbb C^k$}
\keywords{}
\subjclass{Primary: 32H02  ; Secondary : 32H50}
\thanks{$^1$ Supported by CSIR-UGC(India) fellowship.}
 \thanks{$^2$ Supported by the DST SwarnaJayanti
Fellowship 2009--2010 and a UGC--CAS Grant.}

\author{Sayani Bera$^{1}$  and Kaushal Verma$^{2}$} 
\address{Sayani Bera: Department of Mathematics, Indian Institute of Science,
Bangalore 560 012, India}
\email{sayani@math.iisc.ernet.in}

\address{Kaushal Verma: Department of Mathematics, Indian Institute of Science,
Bangalore 560 012, India}
\email{kverma@math.iisc.ernet.in}

\pagestyle{plain}

\begin{abstract}
The goal of this article is two fold. First, using transcendental shift--like automorphisms of $\mbb C^k, ~k \ge 3$ we construct two examples of non--degenerate entire mappings with 
prescribed ranges. The first example exhibits an entire mapping of $\mbb C^k, ~k \ge 3$ whose range avoids a given polydisc but contains the complement of a slightly larger 
concentric polydisc. This generalizes a result of Dixon--Esterle in $\mbb C^2.$ The second example shows the existence of a Fatou--Bieberbach domain in $\mbb C^k,~k \ge 3$ that is 
constrained to lie in a prescribed region. This is motivated by similar results of Buzzard and Rosay--Rudin. 
In the second part we compute the order and type of
entire mappings that parametrize one dimensional unstable manifolds for shift-like
polynomial automorphisms and show how they can be used to prove a Yoccoz type
inequality for this class of automorphisms.
\end{abstract}

\maketitle

\section{Introduction}

\noindent Let ${\rm Aut}(\mbb C^k)$ be the group of holomorphic automorphisms of
$\mbb C^k$. For $k \ge 2$, ${\rm Aut}(\mbb C^k)$ is infinite dimensional as it
contains the
automorphisms defined by
\[
(z_1, z_2, \ldots, z_k) \mapsto (z_1, \ldots, z_{k -1}, h(z_1, z_2, \ldots,
z_{k-1}) + z_k)
\]
where $h$ is an entire function on $\mbb C^{k-1}$. Let ${\rm Aut}_1(\mbb C^k)
\subset {\rm Aut}(\mbb C^k)$ be the subgroup consisting of automorphisms $f$
whose Jacobian $J_f \equiv 1$.
Thus ${\rm Aut_1}(\mbb C^k)$ consists exactly of those automorphisms that preserve
volume. The topology on ${\rm Aut}(\mbb C^k)$ is the topology of uniform
convergence on compact subsets
of $\mbb C^k$ applied to both a map and its inverse. This gives ${\rm Aut}(\mbb
C^k)$ the structure of a Frechet space and in fact this topology can be induced
by a complete metric
which may be quickly recalled as follows. For $r>0$ and $f \in {\rm Aut}(\mbb C^k)$, let
$\norm{f}_r$ denote the supremum norm of $f$ on the closed
ball of radius $r$ centered at origin. For each $n \ge 1$ and $f, g \in
\Aut$, let
\[
d_n(f, g) = \max \Big \{ \norm{f(z) - g(z)}_n,\norm{f^{-1}(z) - g^{-1}(z)}_n\Big\}
\]
and define
\[
d(f, g) = \sum_{n \ge 1} 2^{-n} d_n(f, g)/(1 + d_n(f, g)).
\]

\medskip

For $k \ge 2$ and $1 \le \nu \le k - 1$, a shift-like mapping of type $\nu$ is
an element of $\Aut$ defined by
\[
f(z_1, z_2, \ldots, z_k) = (z_2, \ldots, z_k, h(z_{k - \nu + 1}) - a z_1)
\]
for some entire function $h$ and $a \in \mbb C^{\ast}$. Such a map is determined
by a choice of an entire function $h$ and a non-zero constant $a$. We shall also consider a finite
composition of such maps $f
= f_m \circ \ldots \circ f_1$ where each $f_s$ is shift-like of type $\nu$ and
is determined by an entire function $h_s$ and $a_s \in \mbb C^{\ast}$. 
The automorphisms $f$ obtained by choosing $h$ to be a
polynomial were introduced and studied by Bedford--Pambuccian in \cite{BP} and
it was noted that they
share many properties with the much studied class of H\'{e}non maps in 
${\rm Aut}(\mbb C^2)$ such as the existence of a filtration, the construction of Green
functions and the
corresponding stable and unstable currents. These shift-like maps can therefore
be thought of as generalizations of H\'{e}non maps and in fact each $f_s$ is
regular in the sense
of Sibony (\cite{Sb}). However, the composition of more than one shift-like maps of type $\nu$ is only
$q$-regular, for some $q \ge 1$, as defined by Guedj--Sibony in \cite{GS}.

\medskip

Let $f_j \in {\rm Aut}_1(\mbb C^k)$ be a sequence that converges uniformly on
compact subsets of $\mbb C^k$ to $f$. It is evident that $f$ is injective and
$J_f \equiv 1$. In fact,
Dixon--Esterle observed that (see \cite{DE}) the range of $f$ is precisely those
points $p \in \mbb C^k$ for which the set of preimages $\{f^{-1}_j(p)\}$ is
bounded. Using this
observation they constructed several non-degenerate holomorphic endomorphisms of
$\mbb C^2$ with prescribed ranges -- here, a non-degenerate map is one whose
Jacobian does
not vanish identically. To cite an example, Theorem 8.13 in \cite{DE} constructs
a non-degenerate holomorphic endomorphism $G$ of $\mbb C^2$ for every given $\ep
> 0$ such that
the range $G(\mbb C^2)$ avoids the closure of the unit polydisc $D^2(0, 1)
\subset \mbb C^2$ but contains the complement of the slightly large polydisc
$D^2(0, 1 + \ep) = \{z \in
\mbb C^2 : \vert z_1 \vert, \vert z_2 \vert < 1 + \ep\}$. Moreover, for each $w
\in G(\mbb C^2)$, there is a bound on the cardinality of the fibre $G^{-1}(w)$
in terms of $\ep$ alone.
This should be compared with Theorem 8.5 of Rosay--Rudin (\cite{RR1}) that
constructs for a given strictly convex compact set $K \subset \mbb C^k$ ($k \ge
2$) and a countable dense
subset $E$ of $\mbb C^n \setminus K$, a volume preserving biholomorphism from
$\mbb C^k$ to $\mbb C^k$ whose range avoids $K$ but contains $E$. At the heart
of the construction in
\cite{DE} is a sequence in ${\rm Aut}_1(\mbb C^2)$ of the form \[
\theta_m(z_1, z_2) = (u_1, u_2)
\]
where
\begin{align}
u_1 &= z_1 + h_m(z_2)\\
u_2 &= z_2 + h_m(u_1) \notag
\end{align}
and the $h_m$'s are judiciously chosen entire functions with prescribed
behaviour in apriori fixed vertical strips in the complex plane. The sequence
$G_m = \theta_1 \circ \theta_2
\circ \ldots \circ \theta_m$ is shown to converge uniformly on compact subsets
of $\mbb C^2$ to, say $\ti G$ whose range can be understood by keeping track of
the conditions imposed on
the $h_m$'s. Doing this is the main part of the construction and it sheds a
great deal of light on the behaviour of $\ti G$. Upto an affine change of
coordinates, the desired map $G =
\psi \circ \tilde G$ where $\psi(z) = (z_1^n, z_2^n)$ for a suitable $n \ge 1$.
The genesis of this note lies in our attempt to understand this construction in
\cite{DE} with a view to
constructing similar examples in $\mbb C^k$, $k \ge 3$. For $r > 0$, let
$D^k(0, r) \subset \mbb C^k$ be the polydisc around the origin with radius $r$
in each of the coordinate
directions. It must be mentioned that this technique of constructing entire
mappings with prescribed ranges by starting with a suitable sequence in ${\rm
Aut}_1(\mbb C^k)$ has been
used rather successfully in the recent past and examples include \cite{Gl},
\cite{GlSt}, \cite{St} and \cite{W} to cite only a few.

\begin{thm}\label{thm1.1}
For every $k \ge 3$ and every $\ep > 0$, there is a non-degenerate holomorphic
mapping $G : \mbb C^k \ra \mbb C^k$ such that the range $G(\mbb C^k)$ avoids the
closure of the unit
polydisc $D^k(0, 1)$ but contains the complement of the larger polydisc
$D^k(0, 1 + \ep)$. Moreover, for each $w \in G(\mbb C^k)$, the cardinality of
the fibre $G^{-1}(w)$ is at
most $n^k$ where $n$ depends only on $\ep > 0$.
\end{thm}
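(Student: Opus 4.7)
The plan is to adapt the Dixon--Esterle strategy described above by replacing the two-dimensional maps $\theta_m$ of (1.1) with volume-preserving compositions of shift-like maps in $\mbb C^k$. For each $m$ I would choose an entire function $h_m$ on $\mbb C$, set $f_m(z) = (z_2, \ldots, z_k, h_m(z_k) - a z_1)$ with $a = \pm 1$ chosen so that $f_m$ lies in ${\rm Aut}_1(\mbb C^k)$, and define $\theta_m$ to be a $k$-fold (or, for parity reasons if needed, $2k$-fold) composition $f_m \circ \ldots \circ f_m$. A direct computation shows that after $k$ iterates every coordinate of $\theta_m(z)$ depends non-trivially on every input coordinate, which supplies the mutual coupling between coordinates that the two-dimensional $\theta_m$ of (1.1) produces in a single line. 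Setting $G_m = \theta_1 \circ \theta_2 \circ \ldots \circ \theta_m$ and imposing $\sup_{|\z| \le R_m} |h_m(\z)| \le 2^{-m}$ for a rapidly increasing sequence $R_m \ra \infty$ guarantees that $G_m$ converges uniformly on compact subsets of $\mbb C^k$ to an injective entire map $\ti G$. The Dixon--Esterle observation recalled in the introduction then identifies $\ti G(\mbb C^k)$ with $\{p \in \mbb C^k : \{G_m^{-1}(p)\}_{m \ge 1} \text{ is bounded}\}$.

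The heart of the argument is to design the $h_m$ so that, for some $\delta = \delta(\ep) > 0$, the backward orbit of every $p \in \mbb C^k \sm \ov{D^k(0, 1 + \delta)}$ remains bounded while that of every $p \in \ov{D^k(0, 1)}$ escapes. The first property follows from the smallness of $h_m$ on $\{|\z| \le R_m\}$: so long as the coordinates of $G_j^{-1}(p)$ stay inside this disc, each $\theta_m^{-1}$ acts as a small perturbation of a coordinate permutation composed with a sign change, and an inductive estimate keeps the orbit in a fixed compact set. The second property is enforced by arranging $h_m$ to be extremely large, say of size much greater than the $m$-th term of a prescribed exhaustion of $\mbb C^k$, on a family of small discs in $\mbb C$ positioned so that, for every $p \in \ov{D^k(0, 1)}$, the argument of $h_m$ inside at least one of the shift-like inverses constituting $\theta_m^{-1}$ is forced into one of these discs. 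A single application of Runge's theorem in $\mbb C$ then yields an $h_m$ satisfying both requirements.

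The final map is $G = \psi \circ \ti G$ with $\psi(z_1, \ldots, z_k) = (z_1^n, \ldots, z_k^n)$ and $n = n(\ep)$ chosen so that $(1 + \delta)^n \le 1 + \ep$. Since $\psi^{-1}(\ov{D^k(0, 1)}) = \ov{D^k(0, 1)}$ and $\psi(\mbb C^k \sm \ov{D^k(0, 1 + \delta)}) = \mbb C^k \sm \ov{D^k(0, (1 + \delta)^n)}$, the range of $G$ avoids $\ov{D^k(0, 1)}$ but contains $\mbb C^k \sm \ov{D^k(0, 1 + \ep)}$; the bound $n^k$ on fibres comes from $\psi$ being an $n^k$-to-one branched cover. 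The principal obstacle, compared with the two-dimensional argument in \cite{DE}, is the cyclic nature of shift-like maps: the single function $h_m$ is evaluated at different coordinates at each of the $k$ stages of $\theta_m^{-1}$, so the ``bad'' discs where $h_m$ is forced to be large must be positioned to catch \emph{every} coordinate of \emph{every} backward iterate of a polydisc point---a combinatorially more intricate task than the single-coordinate choice of vertical strips used in \cite{DE}.
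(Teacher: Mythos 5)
Your overall template (a limit of volume--preserving shift--like compositions followed by the power map $\psi(z)=(z_1^n,\ldots,z_k^n)$) matches the paper's, but the intermediate map $\ti G$ you ask for cannot exist, and this sinks the argument at precisely the step you call ``the heart of the argument.'' You want $\ti G$ injective (as a locally uniform limit of maps in ${\rm Aut}_1(\mbb C^k)$) with $\ti G(\mbb C^k)$ containing $\mbb C^k\setminus \ov{D^k(0,1+\delta)}$ and disjoint from $\ov{D^k(0,1)}$; then $\mbb C^k\setminus \ti G(\mbb C^k)$ would be a nonempty compact set. But $\ti G(\mbb C^k)$ is biholomorphic to $\mbb C^k$, hence Stein, hence a domain of holomorphy (it is even Runge by Lemma \ref{1}(ii)); and by the Hartogs phenomenon, for $k\ge 2$ every function holomorphic on a connected domain containing the complement of a compact set extends to all of $\mbb C^k$, so such a domain of holomorphy must be all of $\mbb C^k$ --- a contradiction. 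Equivalently, a domain biholomorphic to $\mbb C^k$ can never have nonempty compact complement. Consequently no choice of the $h_m$'s, however cleverly the ``bad'' discs are positioned, can force the backward orbits of all points of $\ov{D^k(0,1)}$ to escape while keeping bounded the backward orbits of all points outside $\ov{D^k(0,1+\delta)}$.

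The moral is that $\psi$ is not merely a bookkeeping device converting $\delta$ into $\ep$ and yielding the $n^k$ fibre bound: it is the essential step that compactifies an unavoidably \emph{unbounded} omitted set. In the paper, the limit map $F$ (and its affine modification $H=L\circ F$) omits the tube domain $\{\,z: \Re\pi_j(z)>1 \ \text{for all}\ j\,\}$ and contains prescribed complementary tube regions; these are conditions on real parts only, which is exactly what the Arakelian--Gauthier approximation on unions of half--planes can deliver (a closed set consisting of a disc and the complement of a slightly larger one would not even satisfy the hypotheses of that theorem). Only after applying $\psi$ --- every point of the range of $H$ has some $\Re w_j>1$, hence $|w_j^n|>1$, while any point outside $D^k(0,1+\ep)$ admits a choice of $n$-th roots of its coordinates whose real parts land in one of the covered tube regions --- does the omitted set become a neighbourhood of the compact polydisc. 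If you redesign the construction so that the $h_m$'s are prescribed on half--planes, producing translations on a lattice of tube domains as in Theorems \ref{thm1a} and \ref{theoremA}, and only then apply $\psi$, the argument goes through.
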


The first step in the proof of this theorem is to identify a sequence in ${\rm
Aut}_1(\mbb C^k)$ that has the right dynamical properties to create the desired
mapping $G$. To do this,
note that $\theta_m$ as in (1.1) can be factored as
\[
\theta_m = \tau_m \circ \tau_m
\]
where
\[
\tau_m(z_1, z_2) = (z_2, z_1 + h_m(z_2))
\]
is a shift-like map of type $1$. Thus we are led to consider the sequence
\[
\Theta_m = \eta_m \circ \eta_m \circ \ldots \circ \eta_m
\]
where
\[
\eta_m(z_1, z_2, \ldots, z_k) = (z_2, \ldots, z_k, z_1 + h_m(z_k))
\]
is a shift-like map of type 1 and $h_m$ is a sequence of entire functions with
prescribed behaviour in apriori fixed vertical strips. The conditions imposed on
the $h_m$'s guarantee the
uniform convergence of $G_m = \Theta_1 \circ \Theta_2 \circ \ldots \circ
\Theta_m$ on compact subsets of $\mbb C^k$ and since the method of \cite {DE}
does not directly generalize to the case $k \ge 3,$ Section $2$ extends and
streamlines the proof in \cite{DE} in
$\mbb C^2$ to obtain the desired holomorphic endomorphism of $\mbb C^k$ for all
$k \ge 3$.
\medskip
\par

The next result is motivated by the constructions of Fatou-Bieberbach domains
that are constrained to lie in apriori prescribed small regions in $\mbb C^k$
due to Buzzard \cite{Bu}
and Rosay--Rudin \cite{RR2}. In particular, Theorem 4.3 in \cite{Bu} shows that
for a given lattice $\Lambda \in \mbb C^k$, $k \ge 2$ there exists an $\ep > 0$
small enough
and a Fatou-Bieberbach domain that lies in the complement of the union of the
balls $B(q, \ep)$ as $q$ runs over all points in $\Lambda$. By working with
transcendental shift-like
maps of type $1$ again, the basic idea of \cite{DE} as discussed above can be
adapted to show the existence of a volume preserving endomorphism of $\mbb C^k$,
$k \ge 2$, whose
range avoids large open sets and a specified collection of disjoint polydiscs, but
at the same time contains an apriori given collection of disjoint polydiscs as well.
To make this precise,
fix $a > 0$ and for an integer $K \ge 1$, define:
\medskip

\begin{figure}[H]\label{fig1}
\includegraphics[height=4in,width=7in]{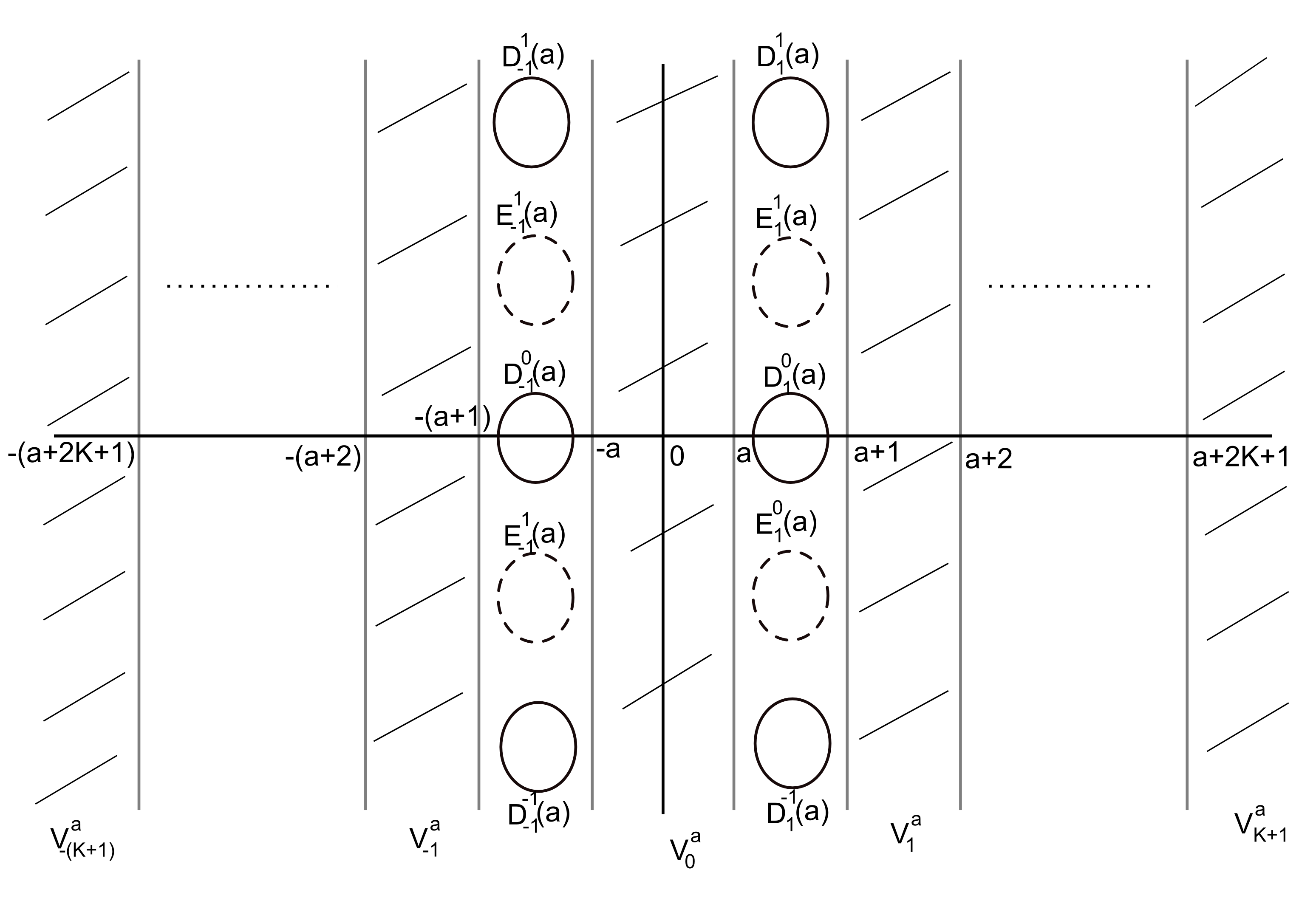}
\caption{}
\end{figure}
\begin{itemize}

\item $V^a_l = \big\{ z \in \mbb C : a + (2l-1) \le \Re z \le a + 2l \big\},\; 1
\le l \le K$,
\medskip
\item $V^a_{K + 1} = \big\{ z \in \mbb C : \Re z \ge a + 2K + 1 \big\}$,
\medskip
\item $V^a_0 = \big\{ z \in \mbb C : \vert \Re z \vert \le a \big\}$,
\medskip
\item $V^a_{-l} = \big\{ z \in \mbb C : -(a + 2l) \le \Re z \le -(a + 2l - 1)
\big\}, \; 1 \le l \le K$,
\medskip
\item $V^a_{-(K+1)} = \big\{ z \in \mbb C : \Re z \le -(a + 2K + 1)  \big\}$,
\medskip
\item $D^a_l = D^0_l(a) \cup D^1_{\pm l}(a) \cup D^{\pm 2}_l(a) \cup \ldots$, where for each $n \in \mbb Z$ and
$1 \le \vert l \vert \le K + 1$, $D^n_l(a)$ is a disc of radius $1/8$ centered at the
point $(a + (4l -3)/2 ,
2n)\in \mbb C$, and
\medskip
\item $E^a_l = E^0_l(a) \cup E^{\pm 1}_l(a) \cup E^{\pm 2}_l(a) \cup \ldots$, where for each $n \in \mbb Z$ and
$1 \le \vert l \vert \le K + 1$, $E^n_l(a)$ is a disc of radius $1/8$ centered at the
point $(a + (4l -3)/2,
2n+1) \in \mbb C$.

\end{itemize}

\medskip
As seen in Figure 1, the regions $V^a_{\pm l}, V^a_0, V^a_{\pm(K + 1)}$ are
pairwise disjoint vertical strips in the plane and the discs $D^n_l(a),
E^n_l(a)$, which are pairwise disjoint
as well, are contained in the corridors between them.

\begin{thm}\label{thm1.2}
For every $a >0$ and $K \ge 1$, there exists an injective, volume preserving holomorphic map
$F : \mbb C^k \ra \mbb C^k$ such that
\begin{enumerate}
\item[(i)] $V^a_{l_1} \times V^a_{l_2} \times \ldots \times V^a_{l_k} \notin
F(\mbb C^k)$, where $1 \le l_i \le K + 1$ for every $1 \le i \le k$,
\item[(ii)] $V^a_{-l_1} \times V^a_{-l_2} \times \ldots \times V^a_{-l_k} \notin
F(\mbb C^k)$, where $1 \le l_i \le K + 1$ for every $1 \le i \le k$,
\item[(iii)] $D^a_{l_1} \times D^a_{l_2} \times \ldots \times D^a_{l_k} \notin
F(\mbb C^k)$, where $\vert l_i \vert \le K + 1$ for every $1 \le i \le k$, and
\item[(iv)]  $E^a_{l_1} \times E^a_{l_2} \times \ldots \times E^a_{l_k} \in
F(\mbb C^k)$, where $\vert l_i \vert \le K + 1$ for every $1 \le i \le k$.
\end{enumerate}
\end{thm}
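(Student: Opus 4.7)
The plan is to follow the Dixon--Esterle strategy described in the introduction and deployed for Theorem \ref{thm1.1}, suitably adapted so that a single sequence of shift-like building blocks realises all four constraints simultaneously. Concretely, I would write $F = \lim_m F_m$ uniformly on compact subsets of $\mbb C^k$, where $F_m = \Theta_1 \circ \Theta_2 \circ \cdots \circ \Theta_m$ and each $\Theta_m = \eta_m \circ \cdots \circ \eta_m$ ($k$ factors) with
\[
\eta_m(z_1, \ldots, z_k) = (z_2, \ldots, z_k, z_1 + h_m(z_k))
\]
for a judicious sequence $h_m$ of entire functions of one complex variable. Since each $\eta_m$ has Jacobian determinant $\pm 1$, every $F_m$ is volume preserving, as is $F$ once uniform convergence on compacta is arranged. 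By the Dixon--Esterle principle,
\[
F(\mbb C^k) = \bigl\{\, w \in \mbb C^k : \{F_m^{-1}(w)\}_{m \ge 1} \text{ is bounded} \,\bigr\},
\]
so (i)--(iii) amount to forcing the backward orbit of any $w$ in a listed forbidden product to escape to infinity, while (iv) requires the backward orbit to stay bounded whenever $w$ lies in an $E$-product.

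Unwinding the inverse of a $k$-fold shift gives, for each $m$,
\[
\Theta_m^{-1}(w_1, \ldots, w_k) = \bigl(w_1 - h_m(w_k - h_m(w_{k-1})),\, w_2 - h_m(w_1),\, \ldots,\, w_k - h_m(w_{k-1})\bigr),
\]
so each component of $\Theta_m^{-1}$ is controlled by the values of $h_m$ on an individual coordinate slice of the product in question. The closed set
\[
A \;=\; V^a_0 \cup \bigcup_{l=1}^{K+1} \bigl(V^a_l \cup V^a_{-l}\bigr) \cup \bigcup_{|l| \le K+1} \bigl(D^a_l \cup E^a_l\bigr) \subset \mbb C
\]
has pairwise disjoint closed pieces and complement with no bounded components, hence is admissible for Arakelyan's approximation theorem. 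I would use Arakelyan (combined with an exhaustion argument to guarantee the convergence $F_m \to F$) to construct each $h_m$ that is uniformly $\epsilon_m$-close to $0$ on each $E^n_l(a)$ and on a large neighbourhood of $V^a_0$, with $\sum_m \epsilon_m < \infty$, and uniformly close to preassigned constants $d^m_l$ on each $D^n_l(a)$ and each $V^a_{\pm l}$, $1 \le l \le K+1$, with $|d^m_l|$ large enough to dominate the sizes so far of $F_{m-1}^{-1}$ on the forbidden products. An additional largeness condition is imposed so that, on a forbidden product, the inner nested argument $w_k - h_m(w_{k-1})$ is forced into a region where $h_m$ is also large, ensuring escape of the twisted first coordinate.

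With these choices (iv) follows quickly: if $w \in E^a_{l_1} \times \cdots \times E^a_{l_k}$ then every $w_{j-1}$ lies in some $E^a_{l_{j-1}}$, so $h_m(w_{j-1})$ and $h_m(w_k - h_m(w_{k-1}))$ are of size $O(\epsilon_m)$; hence $\Theta_1^{-1}(w)$ lies in a slightly fattened $E$-product, and by induction $\|F_m^{-1}(w) - w\| \le C\sum_{j\ge 1} \epsilon_j$ uniformly in $m$, so the backward orbit stays bounded. For (i)--(iii), for $w$ in a listed forbidden product, at least one of the $w_{j-1}$'s lies in a $V$-strip or $D$-disc where $h_m$ is huge, hence some coordinate of $\Theta_m^{-1}(w)$ has large modulus, and by inductively choosing the constants $d^{m'}_l$ ($m' > m$) larger than the preceding backward orbit norms, this growth is preserved and the orbit escapes.

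The main obstacle is the inductive balancing act that couples (iv) to (i)--(iii): the constants $d^m_l$ must grow fast enough to dominate the already known sizes of the escaping backward orbits of earlier forbidden products, yet $h_m$ must remain small on the $E$-discs, on $V^a_0$, and on the chosen compact exhaustion of $\mbb C^k$, all simultaneously and with summable errors. Since the $D$-discs and $E$-discs sit inside the same corridors of $\mbb C$, a crude prescription would allow the ``large'' and ``small'' values of $h_m$ to interfere; Arakelyan's theorem on the unbounded closed set $A$ is precisely what allows the two prescriptions to coexist. The most delicate verification is that the first coordinate of $\Theta_m^{-1}$, namely $w_1 - h_m(w_k - h_m(w_{k-1}))$, behaves correctly on every forbidden product; this requires the two-step prescription mentioned above, and it is the part of the construction that most genuinely uses $k \ge 2$ shift-like factors rather than a single one.
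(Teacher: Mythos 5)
Your overall framework --- a Dixon--Esterle limit of $k$-fold shift-like compositions, with the building functions prescribed via Arakelian--Gauthier to be small on the $E$-discs and close to large constants on the $V^a_{\pm l}$-strips and $D$-discs --- is the right one, and your analysis of $\Theta_m^{-1}$ and of condition (iv) is essentially sound. But there is a fatal gap in the convergence of $F_m$, and it is exactly at the point where you wave at ``an exhaustion argument.'' For $F_m = \Theta_1\circ\cdots\circ\Theta_m$ to converge uniformly on compact sets one needs $\sum_m\norm{I-\Theta_m}_r<\infty$ for every $r$, i.e.\ the \emph{late} functions $h_m$ must be summably small on every fixed ball. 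In your scheme the set on which $h_m$ is prescribed is \emph{fixed} for all $m$: in particular $h_m$ must be close to the large (indeed growing) constant $d^m_1$ on the disc $D^0_1(a)$, which is a fixed compact subset of $\mbb C$ contained in every ball of radius $>a+3/2$. So $\norm{I-\Theta_m}_r\gtrsim |d^m_1|$ for all $m$, the series diverges, and $F_m$ does not converge. Arakelian's theorem lets the ``large'' and ``small'' prescriptions coexist on the disjoint closed pieces of your set $A$, but it cannot reconcile ``large on $D^0_1(a)$'' with ``small on a ball containing $D^0_1(a)$.''

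The device you are missing, and which the paper uses, is to make the decomposition \emph{move} with the step index: at step $n$ the parameter is $a+2n$, so the central strip $V_0^{a+2n}=\{|\Re z|\le a+2n\}$ on which $f_{a+2n}$ is small (of size $2^{-(4+2n)}$) grows to engulf every compact set --- this is what gives $\sum\norm{I-\theta_{a+2n}^{-1}}_r<\infty$ --- while the strips and $D$-discs on which $f_{a+2n}$ is large recede to infinity. This also replaces your inductive growth of the constants $d^m_l$ by a single \emph{fixed} constant $M>2a+4K+5$: one shows $\theta_a$ maps each forbidden $V$- or $D$-product into $V^{a+2}_{K+1}\times\cdots\times V^{a+2}_{K+1}$, and that $\theta_{a+2i}$ maps $V^{a+2i}_{K+1}\times\cdots$ into $V^{a+2(i+1)}_{K+1}\times\cdots$, so the real parts grow linearly and $|F_n^{-1}(z)|\to\infty$; your alternative of taking $d^{m}_l$ ``larger than the preceding backward orbit norms'' is unnecessary for escape and, as above, incompatible with convergence. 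Your treatment of (iv) survives the transfer to the moving decomposition (the fixed $E$-discs get absorbed into $V_0^{a+2n}$ after finitely many steps, and one needs the radius-$1/4$ fattened discs to accommodate the accumulated perturbation $\sum 2^{-(4+2n)}$), but as written your proof does not produce a well-defined limit map $F$.
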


\noindent We will henceforth focus on polynomial shift-like maps of type $1$.
Let $F = F_m \circ F_{m-1} \circ \ldots \circ F_1$, $m \ge 2$ where each
\[
F_i(z_1, z_2, \ldots, z_k) = (z_2, z_3, \ldots, z_k, \al_i z_1 + p_i(z_k))
\]
for some polynomial $p_i$ of degree $d_i \ge 2$ and $\al_i \not= 0$. The degree
of $F$ is $d = d_1d_2 \ldots d_m$. Let
\[
K^{\pm} = \big\{ z \in \mbb C^k : \text{the sequence} \; F^{\pm n}(z) \;
\text{is bounded} \big\}
\]
and $K = K^+ \cap \;K^-$. Suppose $a \in \mbb C^k$ is a saddle point of $F$ such
that the eigenvalues $\la_1, \la_2, \ldots, \la_k$ of the derivative $DF(a)$
satisfy $\vert \la_k \vert
> 1$ and $\vert \la_i \vert < 1$ for all $1 \le i \le k -1$. Such an $a$ will be
called a $(k-1, 1)$ saddle. The existence of a filtration for $F$ was
established in \cite{BP} and here
we are interested in using it to study the unstable manifolds associated to a
$(k-1, 1)$ saddle. Now given a $(k-1, 1)$ saddle $a$, we will drop the subscript
$k$ on the
unique eigenvalue which is greater than $1$ in modulus and simply write it as
$\la$. By Sternberg's theorem, there is an
entire mapping $H : \mbb C \ra \mbb C^k$ such that the unstable manifold $W^u(a)
= H(\mbb C)$ with $H(0) = a$ and $F \circ H(t) = H(\la t)$ for $t \in \mbb C$.
Note that $W^u(a) \subset
K^-$. Let $H(t) = (h_1(t), h_2(t), \ldots, h_k(t))$. Recall that the order
$\rho$ of $g \in \mathcal O(\mbb C)$ is
\[
\rho = {\rm ord} g = \limsup_{r \ra \infty} \frac{\log \log \sup_{ \vert z \vert
= r} \vert g(z) \vert}{\log r}
\]
and if $0 < \rho < \infty$, we define the type $\sigma$ of $g$ by
\[
\sigma = \limsup_{r \ra \infty}(1/r^{\rho}) \log {\rm sup}_{ \vert z \vert = r} \vert
g(z) \vert.
\]
We say that $g$ is of mean type if $0 < \sigma < \infty$. It was shown by
Fornaess--Sibony \cite{FS} that for degree two H\'{e}non maps in $\mbb C^2$,
each of the coordinate functions
$h_1, h_2$ of the map $H$ that parametrizes the unstable manifold of a saddle
point, with eigenvalues $0 < \vert \mu \vert < 1 < \vert \la \vert$, are entire
functions of order at most
$\log 2 / \log \vert \la \vert$. This was strengthened by Jin (\cite{J}) who
showed that the order is in fact equal to $\log d / \log \vert \la \vert$ where
$d$ is the degree of a given
generalized H\'{e}non map. A related result was also obtained by Cantat
(\cite{C}).

\medskip

For $z = (z_1, z_2, \ldots, z_k) \in \mbb C^k$, let $\Vert z \Vert = \max_{1 \le
i \le k} \vert z_i \vert$ and for large $R < \infty$ define
\begin{align*}
V_i &= \big\{ z = (z_1, z_2, \ldots, z_k) \in \mbb C^k : \vert z_i \vert > R,
\vert z_i \vert = \Vert z \Vert \big\} \; \text{for} \;1 \le i \le k,\\
V   &= \big\{ z \in \mbb C^k : \Vert z \Vert \le R  \big\},\\
V^+ &= V_k \; \text{and} \; V^- = V_2 \cup V_3 \cup \ldots \cup V_{k-1}.
\end{align*}
Let $G_j = F_j \circ F_{j-1} \circ \ldots \circ F_1$ for all $1 \le j \le m$.
The coordinate projection on the $j$-th factor will be denoted by $\pi_j$, i.e.,
$\pi_j(z) = \pi_j(z_1,
z_2, \ldots, z_k) = z_j$ for $1 \le j \le k$ and for a positive integer $n$, let
$[n] = n \mod m$. For a given $\ep > 0$ and every $1 \le i \le k$, define $m-1$
constants by
\[
D^l_i = d_{[m-k+i]} d_{[m-k+i-1]} \ldots d_{[m-k+i-(l-1)]}
\]
where $1 \le l \le m-1$. Finally, for $n \ge 0$ and $1 \le i \le k$, let $z_i^n
= \pi_i \circ F^n(z)$.

\begin{thm}\label{thm1.3}
For a given $\ep > 0$ there exist $2(k-1)$ constants defined by
\[
\log C_j(\pm \ep) = (1 + D^1_j + D^2_j + \ldots + D^{m-1}_j) \log (1 \pm \ep)
\]
where $2 \le j \le k$ and $M >0$ such that if $z = (z_1, z_2, \ldots, z_k) \in
K^-$, then for every $n \ge 0$
\[
\vert z_j^n \vert \le C_j(\ep)^{d^n} \max \Big\{ \vert z_j^0 \vert^{d^n},
M^{d^n} \Big\}
\]
and
\[
\max \Big\{ \vert z_j^n \vert, C_j(-\ep)^{d^n} M^{d^n} \Big\} \ge C_j(-\ep)^{d^n}
\vert z_j^0 \vert^{d^n}.
\]
\end{thm}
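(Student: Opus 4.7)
The plan is to reduce the $k$-variable iteration of $F$ to a single scalar recurrence. I would define a sequence $\{\eta_l\}_{l \in \mathbb{Z}}$ by $\eta_{j-k} = z_j$ for $1 \leq j \leq k$ and
\begin{equation*}
\eta_l \;=\; \alpha_{[l]}\,\eta_{l - k} \;+\; p_{[l]}\bigl(\eta_{l-1}\bigr) \qquad (l \geq 1).
\end{equation*}
Since each $F_i$ cyclically shifts the first $k - 1$ coordinates and places $\alpha_i z_1 + p_i(z_k)$ in the last slot, a direct induction on $l$ shows that $\eta_l$ is $\pi_k$ applied to the iterate of $z$ after $l$ shift-like factors (cyclically $F_1, F_2, \ldots, F_m, F_1, \ldots$). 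Tracking the shifts yields the identification
\begin{equation*}
z_j^n \;=\; \eta_{mn + j - k}, \qquad 1 \leq j \leq k,\; n \geq 0,
\end{equation*}
so Theorem~\ref{thm1.3} reduces to an estimate on the scalar sequence $\{\eta_l\}$.

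Next, I would choose $M = M(\epsilon)$ large enough that, after absorbing the leading coefficients of the $p_l$, the polynomial estimate $(1 - \epsilon)|w|^{d_l} \leq |p_l(w)| \leq (1 + \epsilon)|w|^{d_l}$ holds for each $l \in \{1, \ldots, m\}$ whenever $|w| \geq M$, and such that the lag-$k$ linear term satisfies $|\alpha_{[l]}\,\eta_{l-k}| \leq \epsilon\,|p_{[l]}(\eta_{l-1})|$ whenever $|\eta_{l-1}| \geq M$. In that regime the recursion collapses to the one-step two-sided inequality
\begin{equation*}
(1 - \epsilon)\,|\eta_{l-1}|^{d_{[l]}} \;\leq\; |\eta_l| \;\leq\; (1 + \epsilon)\,|\eta_{l-1}|^{d_{[l]}}.
\end{equation*}

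Iterating on the log scale: writing $\gamma_l = \log|\eta_l|$ and unfolding the one-step inequality $\gamma_l \leq d_{[l]}\gamma_{l-1} + \log(1 + \epsilon)$ from $l_0 = j - k$ to $l = mn + j - k$ gives
\begin{equation*}
\gamma_{mn + j - k} \;\leq\; d^n\,\gamma_{j - k} \;+\; \log(1 + \epsilon)\sum_{i = 0}^{mn - 1} D^i_j,
\end{equation*}
with the convention $D^0_j = 1$. The combinatorial heart is that $d_{[l]} d_{[l-1]} \cdots d_{[l - m + 1]} = d$ for every $l$, which forces the self-similarity $D^{m + r}_j = d\,D^r_j$. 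Splitting the sum into $n$ blocks of length $m$ accordingly,
\begin{equation*}
\sum_{i = 0}^{mn - 1} D^i_j \;=\; \frac{d^n - 1}{d - 1}\bigl(1 + D^1_j + D^2_j + \cdots + D^{m - 1}_j\bigr),
\end{equation*}
and since $(d^n - 1)/(d - 1) \leq d^n$ for $d \geq 2$ this is dominated by $d^n \log C_j(\epsilon)/\log(1 + \epsilon)$. Exponentiating gives $|z_j^n| \leq C_j(\epsilon)^{d^n}|z_j|^{d^n}$ in the regime where $|\eta_l| \geq M$ throughout the iteration; the lower bound is identical with $1 - \epsilon$ in place of $1 + \epsilon$.

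The main obstacle is handling the transition between the bounded regime $|\eta_l| < M$ and the polynomial regime $|\eta_l| \geq M$; the $\max$ with $M^{d^n}$ in the statement is precisely what absorbs this. The hypothesis $z \in K^-$ guarantees that the backward-indexed seed $\{\eta_l : l \leq 0\}$, consisting of coordinates of backward iterates of $z$, remains bounded. The final checks are that once $|\eta_l|$ crosses $M$ the polynomial estimate with $d_{[l]} \geq 2$ keeps the sequence above $M$ thereafter, and that while the sequence lies below $M$ it is bounded by a constant absorbed into $C_j(\epsilon)^{d^n} M^{d^n}$. Calibrating $M$ so that both absorbed constants are compatible with the prescribed value $C_j(\epsilon) = (1 \pm \epsilon)^{1 + D^1_j + \cdots + D^{m-1}_j}$ is the technical core of the argument.
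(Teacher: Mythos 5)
Your reduction to the scalar recurrence $\eta_l = \alpha_{[l]}\eta_{l-k} + p_{[l]}(\eta_{l-1})$, the identification $z_j^n = \eta_{mn+j-k}$, and the telescoping via $D_j^{m+r} = d\,D_j^r$ are all correct and are essentially the same computation the paper performs (the paper tracks the single new coordinate produced by each shift-like factor and first iterates over one period $m$, then over $n$ periods, using $1+d+\cdots+d^{n-1} \le d^n$). The gap is in the justification of the one-step inequality $(1-\epsilon)|\eta_{l-1}|^{d_{[l]}} \le |\eta_l| \le (1+\epsilon)|\eta_{l-1}|^{d_{[l]}}$. You claim you can choose $M$ so that $|\alpha_{[l]}\eta_{l-k}| \le \epsilon\,|p_{[l]}(\eta_{l-1})|$ \emph{whenever} $|\eta_{l-1}| \ge M$; no choice of $M$ achieves this for an arbitrary point of $\mathbb{C}^k$, since $\eta_{l-k}$ can be arbitrarily large while $\eta_{l-1}$ is only moderately large. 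The absorption of the lag-$k$ linear term is exactly where the hypothesis $z \in K^-$ must enter, and it enters through the Bedford--Pambuccian filtration: $K^- \subset V \cup V^+$ and, crucially, $G_j(K^-) \subset V \cup V^+$ for every intermediate composition $G_j = F_j \circ \cdots \circ F_1$. On $V^+$ the last coordinate dominates all others ($|\eta_{l-k}| \le |\eta_{l-1}|$ and $|\eta_{l-1}| > R$), which makes the linear term negligible compared with $|\eta_{l-1}|^{d_{[l]}}$; on $V$ all coordinates are bounded by $R$, which is absorbed into $M$. This is the content of the paper's Lemma 3.2, which you would need to state and prove (or cite) before the scalar iteration can start.

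Your stated use of $K^-$ --- that it bounds the ``backward-indexed seed'' $\{\eta_l : l \le 0\}$ --- is off the mark: for the forward recursion the seed is just the coordinates of $z$ itself, and boundedness of backward iterates is not what is needed. What $K^-$ actually supplies is the forward invariance statement $G_j(K^-) \subset V \cup V^+$ at every intermediate step of every iterate, without which the two-sided one-step estimate (and hence the whole telescoping argument) does not get off the ground. The remaining bookkeeping about the transition between the regimes $|\eta_l| < M$ and $|\eta_l| \ge M$ is handled correctly by the $\max\{\cdot, M^{d^n}\}$ as you indicate, but it too silently relies on the same filtration control of the linear term in the bounded regime.
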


\noindent As a consequence, for the map $F$ as above that has a $(k-1, 1)$ saddle point 
$a \in \mbb C^k$ and a corresponding parametrization $H$ of
$W^u(a)$, we have the following:

\begin{thm}\label{thm1.4}
The coordinates $h_i$ ($1 \le i \le k$) of the parametrization $H$ are
transcendental entire functions of mean type of order $\rho = \log d /\log \vert
\la \vert$.
\end{thm}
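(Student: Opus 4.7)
The plan is to transfer the iterate estimates of Theorem~\ref{thm1.3} to the coordinate functions $h_j$ of $H$ via the functional equation $F^n \circ H(t) = H(\la^n t)$, which reads coordinate-wise as $h_j(\la^n t) = z_j^n$ evaluated at $z = H(t)$. Since $W^u(a) \subset K^-$, every point on $W^u(a)$ lies in $K^-$, so both halves of Theorem~\ref{thm1.3} are available with $z = H(t)$ for every $t \in \mbb C$ and every $n \ge 0$.

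For the upper bound on the order and type, I fix a large radius $r_0$, set $M_j(r) = \sup_{|t|=r}|h_j(t)|$, substitute $z = H(t)$ into the first inequality of Theorem~\ref{thm1.3}, and take the supremum over $|t| = r_0$ to obtain
\[
M_j(|\la|^n r_0) \le \bigl(C_j(\ep)\max\{M_j(r_0), M\}\bigr)^{d^n}.
\]
Writing $d^n = |\la|^{n\rho}$ with $\rho = \log d/\log|\la|$ and filling in the radii between the geometric sequence $|\la|^n r_0$ using the monotonicity of $M_j$ yields an estimate of the form $\log M_j(r) \le C r^\rho$ for all large $r$, proving simultaneously that $\mathrm{ord}\,h_j \le \rho$ and $\sigma(h_j) < \infty$.

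For the reverse direction I use the second inequality of Theorem~\ref{thm1.3}: for each $j$, pick $t_0 \in \mbb C$ with $|h_j(t_0)| > M$ and $C_j(-\ep)|h_j(t_0)| > 1$, so that the $M^{d^n}$-term in the $\max$ is dominated. The inequality then gives, for every $n \ge 0$,
\[
|h_j(\la^n t_0)| \ge \bigl(C_j(-\ep)|h_j(t_0)|\bigr)^{d^n},
\]
and evaluating at $r_n = |\la|^n |t_0|$ produces $\log M_j(r_n)/r_n^\rho \ge \log\bigl(C_j(-\ep)|h_j(t_0)|\bigr)/|t_0|^\rho > 0$. This simultaneously forces $\mathrm{ord}\,h_j \ge \rho$ and $\sigma(h_j) > 0$; in particular, since $\rho > 0$, each $h_j$ is automatically transcendental because no polynomial has positive order.

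The only delicate point is establishing the existence of such a $t_0$, i.e., that each coordinate $h_j$ is non-constant. Differentiating the functional equation at $t = 0$ shows that $H'(0)$ is the $\la$-eigenvector of $DF(a)$; using the almost-companion form of each $DF_i$ (first $k-1$ rows the cyclic shift, last row $(\al_i, 0, \ldots, 0, p_i'(\cdot))$) together with the recursion $v^{(i)}_j = v^{(i-1)}_{j+1}$ for $j < k$, one verifies that every coordinate of this eigenvector is non-zero, whence $h_j'(0) \ne 0$ and each $h_j$ is non-constant. Liouville's theorem then upgrades non-constancy to unboundedness, which supplies the required $t_0$ and completes the argument.
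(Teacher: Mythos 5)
Your strategy for the coordinates $h_j$ with $2 \le j \le k$ --- pushing the two--sided iterate bounds of Theorem~\ref{thm1.3} through the functional equation $H(\la^n t) = F^n\circ H(t)$, reading off $\log M_j(|\la|^n r_0)\le d^n\log\bigl(C_j(\ep)\max\{M_j(r_0),M\}\bigr)$ and the matching lower bound along the radii $|\la|^n|t_0|$, and concluding $0<\sigma_j<\infty$ for the exponent $\rho=\log d/\log|\la|$ --- is exactly the paper's argument, and that part is sound. But there are two genuine gaps. First, Theorem~\ref{thm1.3} supplies the constants $C_j(\pm\ep)$ only for $2\le j\le k$ (there are $2(k-1)$ of them); it gives no estimate relating $z_1^n$ to $z_1^0$, so your blanket ``for each $j$'' does not cover $h_1$. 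The obstruction is structural: $\pi_1\circ F=\pi_2\circ G_{m-1}$, so the first coordinate of an iterate is controlled by an \emph{intermediate} composition rather than by $z_1^0$, and the paper needs a separate lemma --- computing $\alpha_i^j=\limsup_{r\to\infty} r^{-\rho}\log\max_{|t|=r}|\pi_i\circ G_j\circ H(t)|$, using the shift relation $\alpha_i^j=\alpha_{i+1}^{j-1}$ together with $\alpha_1^m=d\,\alpha_1^0$ forced by the functional equation --- to deduce $0<\sigma_1<\infty$ from the already established finiteness and positivity of $\sigma_2$. Some version of that step is unavoidable when $m\ge 2$.

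Second, your non-constancy argument is incorrect: it is \emph{not} true that every coordinate of the $\la$-eigenvector of $DF(a)$ is nonzero. Take $k=m=3$, $p_1=p_2=p_3=z^2$ and $|\al_1|,|\al_2|<1<|\al_3|$; then $a=0$ is a fixed $(2,1)$ saddle, $DF(0)$ is lower triangular with unstable eigenvector $e_3=(0,0,1)$, so $h_1'(0)=h_2'(0)=0$ and your argument produces no admissible $t_0$ for $j=1,2$. (The $h_j$ are still non-constant --- e.g.\ $h_1(\la t)=\al_1h_1(t)+h_3(t)^2$ forces $h_1$ non-constant once $h_3$ is --- but one cannot see this from $H'(0)$.) The paper's route is more robust: if some $h_{i_0}$ were constant, the two-sided estimates of Lemma~\ref{lem3.1} propagate boundedness to every coordinate, so all $h_i$ would be constant by Liouville, contradicting $h_k'(0)=1$, which is built into the Sternberg parametrization of Lemma~\ref{3lem(a)}. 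You should replace the eigenvector computation with an argument of this kind.
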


\noindent Let $\ti K = H^{-1}(K)$. Following Jin (\cite{J}), we say that $\ti K$
is {\it bridged} if it does not contain the origin as an isolated point. In this case,
as the order of the
$h_i$'s is finite, the Denjoy--Carleman--Ahlfors theorem implies that $\mbb C
\setminus \ti K$ consists of a finite number of components, say $q'$. These
components may be given a
circular ordering, say $U_1, U_2, \ldots, U_q'$, and multiplication by $\la$
preserves the ordering on the collection of components. This means that there is
an integer $p'$ such that
$\la U_j \subset U_{j + p'}$ for every $1 \le j \le q'$. Let $N = \gcd(p', q')$
and $p = p'/N, q = q'/N$. The reasoning given in \cite{J} yields the following
Yoccoz-type inequality
for shift-like maps of type $1$ and we will briefly indicate the main steps for
the sake of completeness. An enlightening discussion of the relevance of such
Yoccoz-type inequalities
may be found in \cite{H} while \cite{BS} discusses such an inequality for
H\'{e}non maps.

\begin{thm}\label{thm1.5}
If $\ti K$ is bridged, there is a choice of logarithm $\tau = \log \la$ such
that
\[
\frac{\Re \tau}{\vert \tau - 2 \pi i p/q \vert^2} \ge \frac{Nq}{2 \log d}.
\]
\end{thm}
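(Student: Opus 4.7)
The plan is to follow Jin's proof \cite{J} of the Yoccoz inequality for H\'{e}non maps, substituting Theorem \ref{thm1.4} for the order/type estimate on the H\'{e}non unstable parametrization. The argument is essentially one-dimensional, taking place on the unstable manifold $W^u(a)$ via $H$.

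First I would transfer the dynamics to $\mbb C$. Let $g(t) = G^-(H(t))$, where $G^-$ is the pluricomplex Green function of $K^-$; then $g$ is a non-negative subharmonic function on $\mbb C$ that vanishes exactly on $\ti K$, is harmonic and positive on each component of $\mbb C \setminus \ti K$, and satisfies the functional equation $g(\la t) = d\, g(t)$ coming from $G^- \circ F = d\, G^-$. Theorem \ref{thm1.4} identifies the order of $g$ as $\rho = \log d /\log \vert \la \vert$ with mean type, which is the quantitative analytic input. Next pass to the log plane by $w = \log t$ and set $u(w) = g(e^w)$: a $2\pi i$-periodic subharmonic function satisfying $u(w + \tau) = d\, u(w)$ for $\tau = \log \la$. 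Fix one of the $N$ invariant cycles of $q$ components of $\mbb C \setminus \ti K$, let $V$ be its union, and pick a connected component $\hat U$ of $\pi^{-1}(V) \subset \mbb C$ where $\pi(w) = e^w$. The bridged hypothesis on $\ti K$ ensures that $\hat U$ is simply connected with a single end at $\Re w = -\infty$. Choose the branch $\tau = \log \la$ so that translation by $\tau$ realizes the combinatorial rotation $p/q$ on the $q$ sub-pieces of $\hat U$; with this choice, $\hat U$ is invariant under $w \mapsto w + (q\tau - 2\pi i p)$.

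A harmonic conjugate of $u$ then promotes it to a holomorphic function $\psi$ on $\hat U$ satisfying $\psi(w + \tau) = d \cdot \psi(w)$, and the change of variable $w' = \log \psi$ conjugates $(\hat U, w \mapsto w + \tau)$ to a subdomain of a right half plane under $w' \mapsto w' + \log d$. Comparing the source cylinder $\hat U / \langle w \mapsto w + (q\tau - 2\pi i p)\rangle$, whose modulus is computed from the translation lattice to be proportional to $N \Re \tau / \vert \tau - 2\pi i p/q \vert^2$, with the target cylinder in the log-$\psi$ picture, whose modulus is proportional to $q/\log d$, via a Schwarz-type argument yields
\[
\frac{\Re \tau}{\vert \tau - 2 \pi i p/q \vert^2} \ge \frac{Nq}{2 \log d}.
\]
The principal obstacle is this modulus comparison: correctly identifying the fundamental annulus and tracking numerical constants through Schwarz so that the coefficient $Nq/(2 \log d)$ emerges exactly, as well as verifying that the $N$ parallel orbits contribute the correct factor. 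These steps transfer from the H\'{e}non setting in \cite{J} with only cosmetic changes once Theorem \ref{thm1.4} is available.
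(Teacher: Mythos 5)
There is a genuine gap, and also a substantive error in the setup. First the error: you define $g = G^- \circ H$ with $G^-$ the Green function of $K^-$. But $H(\mbb C) = W^u(a) \subset K^-$, so $G^- \circ H \equiv 0$; moreover $G^-$ satisfies $G^-\circ F = d^{-1}G^-$, not $G^-\circ F = d\,G^-$. The correct object is $u^+ = G^+\circ H$ (note $\ti K = H^{-1}(K) = H^{-1}(K^+)$ on the unstable manifold), which is subharmonic, vanishes exactly on $\ti K$, and satisfies $u^+(\la t) = d\,u^+(t)$. This is what the paper uses, together with a separate proposition showing ${\rm ord}\,u^+ = \log d/\log|\la|$ (Theorem \ref{thm1.4} gives the order of the coordinates $h_i$, not directly of $u^+$; a short maximum-principle argument is still needed).

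The more serious issue is that the entire quantitative content of the theorem is deferred to a ``modulus comparison via Schwarz'' that is not the method of \cite{J} and does not obviously work here. Your route requires: (a) that the lift $\hat U$ be simply connected so that a single-valued harmonic conjugate of $u$ exists --- bridgedness only guarantees that a branch of $\log$ is single-valued on each component $U_j$ (since the unbounded component of $\ti K$ through $0$ blocks loops around the origin), not that $U_j$ or its logarithm is simply connected; and (b) that the resulting $\psi$ be injective enough to compare moduli of annuli --- but $u^+$ may have critical points on multiply connected components, so $\psi$ is in general neither globally defined nor univalent, and there is no B\"ottcher coordinate in this setting. Precisely because of these obstacles, Jin's proof (and the paper's) does not use a conformal-modulus argument at all. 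Instead it truncates to $v = \max\{u^+-1,0\}$, shows the logarithmic tracts $D_j'$ are distributed along the direction $\tau - 2\pi i p/q$ and invariant under $s\mapsto s + q\tau - 2\pi i p$, proves the length bound ${\rm len}\big(\{\Re s=\xi\}\cap\bigcup_{j,n}\log(\la^n U_j)\big)\le 2\pi$, and then combines Tsuji's inequality for the $N$ disjoint subharmonic tracts with two applications of Cauchy--Schwarz to produce the constant $Nq/(2\log d)$. Those length--area steps are exactly where the factor $N$ from the parallel cycles and the factor $|\tau-2\pi i p/q|^2/\Re\tau$ emerge, and they are absent from your sketch; asserting they ``transfer with only cosmetic changes'' mischaracterizes what \cite{J} actually does and leaves the theorem unproved.
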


\par

\medskip
Note that Proposition A in \cite{J} gives a criterion for $\tilde{K}$ to be bridged in the case of H\'{e}non maps in $\mbb C^2.$ 
It is not clear to us whether an analogous 
criterion is valid for shift-line maps in $\mbb C^k,~k \ge 3.$
\section{Proof of Theorem ~\ref{thm1.1}}
\noindent
As mentioned before, the main idea here is to construct a sequence of
automorphisms $\{ f_j \} \in {\rm Aut}_1 (\mbb{C}^k)$, $k \geq 3$ and use the
following observation in \cite{DE} to obtain injective, volume preserving
holomorphic maps with non-dense range.
\begin{lem}\label{1}
Let $\{f_n\}_{n \geqslant 1}$ be sequence of elements of ${\rm
Aut}_1(\mathbb{C}^k)$ such that $f_n \to F$ uniformly on compact subsets of
$\mbb{C}^k$. Then
\begin{enumerate}
  \item[(i)] $F$ is injective,
  \item[(ii)] $F(\mathbb{C}^k)$ is a Runge domain,
  \item[(iii)] $ f_n^{-1}(z)$ converges uniformly to $F^{-1}(z)$ on compact
subsets of $F(\mathbb{C}^k)$, and
  \item[(iv)] $|{f_n^{-1}(z)}| \to \infty $ uniformly on $\mathbb{C}^k\diagdown
F(\mathbb{C}^k)$.
\end{enumerate}
\end{lem}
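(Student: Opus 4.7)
The plan is to establish the four assertions in the order (i), (iii), (iv), (ii), since the Runge property will be deduced from the earlier items. For (i), note that the volume preserving hypothesis gives $J_{f_n} \equiv 1$, so locally uniform convergence of the derivatives forces $J_F \equiv 1$; the several variable Hurwitz-type theorem -- a locally uniform limit of injective holomorphic maps on a connected domain is either injective or has identically degenerate Jacobian -- then rules out the non-injective alternative.

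For (iii), I would fix a compact set $K \subset F(\mbb C^k)$ and set $L = F^{-1}(K)$. Because $F$ is injective with nowhere vanishing Jacobian it is open, hence a homeomorphism onto its image, so $L$ is compact. After choosing $R$ with $L \subset B(0,R)$ and some larger $R'$, injectivity of $F$ separates $K$ from the compact set $F(\pa B(0,R'))$, and a standard degree theoretic (or Rouch\'e type) argument, combined with uniform convergence on $\ov{B(0,R')}$, shows that the unique preimage $f_n^{-1}(z)$ lies in $B(0,R')$ for every $z \in K$ and every sufficiently large $n$. Montel then gives normality of $\{f_n^{-1}\}$ on $K$, and any subsequential limit $G$ satisfies $G \circ F \equiv {\rm id}$ on a neighbourhood of $L$, so $G = F^{-1}$ and the whole sequence converges.

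Item (iv) should be a short contradiction argument: if for some compact $K \subset \mbb C^k \sm F(\mbb C^k)$ one had a subsequence $z_n \in K$ with $f_n^{-1}(z_n)$ bounded, then after passing to further subsequences with $z_n \ra z \in K$ and $f_n^{-1}(z_n) \ra w$, the locally uniform convergence $f_n \ra F$ on a ball containing the $f_n^{-1}(z_n)$ would yield $z = F(w) \in F(\mbb C^k)$, a contradiction.

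The main obstacle will be (ii). The decisive observation is that although $f_n^{-1}$ is not a polynomial map, each of its coordinates is an entire function on $\mbb C^k$ and is therefore the locally uniform limit of polynomials, for instance of its own Taylor polynomials about the origin. Consequently the polynomial hull inequality $|P(z)| \le \sup_K |P|$ passes through this approximation to give, for every compact $K \subset F(\mbb C^k)$ and every $z$ in its polynomial hull $\widehat K$,
\[
\bigl\vert f_n^{-1}(z) \bigr\vert \;\le\; \sup_{w \in K} \bigl\vert f_n^{-1}(w) \bigr\vert.
\]
By (iii) the right hand side stays uniformly bounded in $n$, while (iv) would force the left hand side to tend to infinity if $z$ were not in $F(\mbb C^k)$. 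Hence $\widehat K \subset F(\mbb C^k)$, which is precisely the Runge property.
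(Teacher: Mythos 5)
The paper does not prove this lemma at all --- it is quoted verbatim as an ``observation'' from Dixon--Esterle \cite{DE} --- so there is no internal proof to compare against; your argument is correct and reconstructs the standard route of \cite{DE}: Hurwitz plus $J_F\equiv 1$ for (i), a degree/Rouch\'e localization of preimages plus normal families for (iii), a subsequence extraction for (iv), and the maximum principle on the polynomial hull applied to the coordinates of $f_n^{-1}$ for (ii). Two points worth tightening when you write it up: Montel requires uniform boundedness of $\{f_n^{-1}\}$ on an open neighbourhood of $K$, not just on $K$ (available since $F(\mbb C^k)$ is open, so enlarge $K$ slightly inside the image); and what your hull argument literally establishes is that $F(\mbb C^k)$ is polynomially convex, from which the Runge property (density of polynomials in $\mathcal O(F(\mbb C^k))$) follows via Oka--Weil, using that $F(\mbb C^k)$, being biholomorphic to $\mbb C^k$, is Stein. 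Note also that (iv) as you prove it --- and as it is actually used later in the paper --- is uniform divergence on compact subsets of the complement.
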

\medskip
\par
 We first define an element in ${\rm Aut}_1(\mbb C^k)$ which is essentially a
prescribed translation in apriori specified domains in $\mbb{C}^k$. To loosely
define these domains, take $k$ copies of $\mbb C$ and divide each copy of $\mbb
C$ into two pieces by the vertical line say $\Re z=0$ . Then $\mbb C^k$ is the
union of $2^k$ disjoint domains and their boundaries, where each domain is
created by taking the product of $k$ copies of half planes, either the left or
right half plane in each factor. Note that each domain is unbounded and the boundary
of each domain is piecewise smooth, the smooth faces being open pieces of affine
real hypersurfaces. By the Arakelian--Gauthier theorem \cite{AG} it is possible
to construct an automorphism in ${\rm Aut}_1(\mbb C^k)$ that behaves like an
apriori specified translation in this domain. Call this Step 0. Now we
iterate the procedure. In Step 1 we dissect the $k$ copies of $\mbb
C$ by say $\Re z=1$. This gives rise to another decomposition of $\mbb C^k$ into
$2^k$ disjoint domains, and just as before, we can apply \cite{AG} again to construct
another element of ${\rm Aut}_1(\mbb C^k)$, which behaves as a translation in
each of these domains. Note that these newly created $2^k$ domains intersect the
previous collection of domains. This gives rise to a finer decomposition of $\mbb
C^k$ by taking all possible intersections of domains in Step 1 and Step 0. For every
$n \geq 1$, Step $n$
is then unambigously defined. The limit of the compositions of these elements in
${\rm Aut}_1(\mbb C^k)$ which are created at each step is of interest to us. The
main difficulty in controlling the sequence of composition is to understand its
behaviour on the finer and finer domains that are created at each step of this
inductive procedure.
\medskip
 \par
 The following lemma establishes the existence of maps in ${\rm Aut}_1(\mbb
C^k)$ as described above when $k=3.$ The case when $k \geq 4 $ is stated
separately in the Appendix for the purpose of clarity.
 \begin{thm}\label{thm1a}
Let $\rho(z)=\min\{|\pi_1(z)|,|\pi_2(z)|,|\pi_3(z)|\}$. Let $\delta>0$ and $0<
\eta < \delta /2.$ For every pair of real triples $\lambda_1,\lambda_2,\lambda_3
$ and $a_1,a_2,a_3,$ there exists a map $\theta \in Aut_1(\mbb{C}^k)$ such that
\begin{enumerate}
 \item [(i)]$|\theta(z)-z+(\lambda_1, \lambda_2, \lambda_3)|<\eta
e^{-\rho(z)^{1/4}}$ whenever
 \[
  \Re \pi_{1}(z)\geqslant a_{1}+\delta,~\Re \pi_{2}(z)\geqslant
a_{2}+\delta,~\Re \pi_{3}(z)\geqslant a_{3}+\delta.
 \]
 \item [(ii)]$|\theta(z)-z+(\lambda_1,0,\lambda_3)|<\eta e^{-\rho(z)^{1/4}}$
whenever
     \[
 \Re \pi_{1}(z)\leqslant a_{1}-\delta,~\Re \pi_{2}(z)\geqslant a_{2}+\delta,~\Re
\pi_{3}(z)\geqslant a_{3}+\delta.
 \]
 \item [(iii)]$|\theta(z)-z+(0,\lambda_2,\lambda_3)|<\eta e^{-\rho(z)^{1/4}}$
whenever
 \[
 \Re \pi_{1}(z)\geqslant a_{1}+\delta,~\Re \pi_{2}(z)\geqslant a_{2}+\delta,~\Re
\pi_{3}(z)\leqslant a_{3}-\delta.\]
 \item [(iv)]$|\theta(z)-z+(0,0,\lambda_3)|<\eta e^{-\rho(z)^{1/4}}$ whenever
 \[
 \Re \pi_{1}(z)\leqslant a_{1}-\delta,~\Re \pi_{2}(z)\geqslant a_{2}+\delta,~\Re
\pi_{3}(z)\leqslant a_{3}-\delta.\]
 \item [(v)]$|\theta(z)-z+(\lambda_1, \lambda_2, 0)|<\eta e^{-\rho(z)^{1/4}}$
whenever
 \[
  \Re \pi_{1}(z)\geqslant a_{1}+\delta,~\Re \pi_{2}(z)\leqslant
a_{2}-\delta,~\Re \pi_{3}(z)\geqslant a_{3}-\lambda_3+\delta.
 \]
 \item [(vi)]$|\theta(z)-z+(\lambda_1,0,0)|<\eta e^{-\rho(z)^{1/4}}$ whenever
     \[
 \Re \pi_{1}(z)\leqslant a_{1}-\delta,~\Re \pi_{2}(z)\leqslant a_{2}-\delta,~\Re
\pi_{3}(z)\geqslant a_{3}-\lambda_3+\delta.
 \]
 \item [(vii)]$|\theta(z)-z+(0,\lambda_2,0)|<\eta e^{-\rho(z)^{1/4}}$ whenever
 \[
 \Re \pi_{1}(z)\geqslant a_{1}+\delta,~\Re \pi_{2}(z)\leqslant a_{2}-\delta,~\Re
\pi_{3}(z)\leqslant a_{3}-\lambda_3-\delta.\]
 \item [(viii)]$|\theta(z)-z+(0,0,0)|<\eta e^{-\rho(z)^{1/4}}$ whenever
 \[
 \Re \pi_{1}(z)\leqslant a_{1}-\delta,~\Re \pi_{2}(z)\leqslant a_{2}-\delta,~\Re
\pi_{3}(z)\leqslant a_{3}-\lambda_3-\delta.\]
 \end{enumerate}
 \end{thm}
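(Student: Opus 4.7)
The plan is to realize $\theta$ as a composition of three volume-preserving shears in ${\rm Aut}_1(\mbb C^3)$, each driven by an entire function of one variable produced via the Arakelian--Gauthier approximation theorem. The key observation is that the shifted thresholds $a_3 - \lambda_3 \pm \delta$ in the hypotheses of (v)--(viii) are exactly what is needed to absorb a pre-translation of the third coordinate by $-\lambda_3$ that the innermost shear induces in cases (i)--(iv); this allows a single one-variable function $h_1$, with threshold chosen at $a_3 - \lambda_3$, to handle the first-coordinate translation in all eight cases simultaneously.

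First, I would invoke the Arakelian--Gauthier theorem three times in $\mbb C$ to construct entire functions $h_1, h_2, h_3 : \mbb C \ra \mbb C$ satisfying, with error bounded by $(\eta/3) e^{-|w|^{1/4}}$ in each case,
\begin{itemize}
\item $h_2(w) \approx -\lambda_2$ on $\{\Re w \geq a_1 + \delta/2\}$ and $h_2(w) \approx 0$ on $\{\Re w \leq a_1 - \delta/2\}$;
\item $h_3(w) \approx -\lambda_3$ on $\{\Re w \geq a_2 + \delta/2\}$ and $h_3(w) \approx 0$ on $\{\Re w \leq a_2 - \delta/2\}$;
\item $h_1(w) \approx -\lambda_1$ on $\{\Re w \geq a_3 - \lambda_3 + \delta/2\}$ and $h_1(w) \approx 0$ on $\{\Re w \leq a_3 - \lambda_3 - \delta/2\}$.
\end{itemize}
In each case the closed set is a union of two vertical half-planes separated by a vertical strip, which is of Arakelian type in $\mbb C$; the choice of threshold $a_3 - \lambda_3$ rather than $a_3$ for $h_1$ is the non-obvious ingredient.

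Next I would define the three shears
\[
\sigma_3(z) = (z_1, z_2, z_3 + h_3(z_2)), \quad \sigma_2(z) = (z_1, z_2 + h_2(z_1), z_3), \quad \sigma_1(z) = (z_1 + h_1(z_3), z_2, z_3),
\]
each a volume-preserving element of ${\rm Aut}_1(\mbb C^3)$, and set $\theta = \sigma_1 \circ \sigma_2 \circ \sigma_3$. A direct computation gives
\[
\theta(z_1, z_2, z_3) = \bigl(z_1 + h_1(z_3 + h_3(z_2)),\; z_2 + h_2(z_1),\; z_3 + h_3(z_2)\bigr).
\]
The estimates on $\pi_2 \theta(z) - z_2$ and $\pi_3 \theta(z) - z_3$ in each of the eight cases are immediate from the Arakelian bounds on $h_2, h_3$ and the hypotheses on $\Re z_1, \Re z_2$. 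For $\pi_1 \theta(z) - z_1 = h_1(z_3 + h_3(z_2))$ one checks that the argument $w = z_3 + h_3(z_2)$ of $h_1$ lies in the appropriate half-plane: in (i)--(iv) where $\Re z_2 \geq a_2 + \delta$, $h_3(z_2)$ is within $\eta/3 < \delta/6$ of $-\lambda_3$, so the hypothesis $\Re z_3 \geq a_3 + \delta$ forces $\Re w \geq a_3 - \lambda_3 + \delta/2$ (with the analogous statement for $\Re z_3 \leq a_3 - \delta$); in (v)--(viii) where $\Re z_2 \leq a_2 - \delta$, $h_3(z_2)$ is within $\eta/3$ of $0$, and the shifted hypotheses $\Re z_3 \geq a_3 - \lambda_3 + \delta$ (resp.\ $\leq a_3 - \lambda_3 - \delta$) again place $w$ in the correct half-plane of $h_1$.

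The main obstacle is converting the Arakelian decay $e^{-|w|^{1/4}}$ in the first component, where $w = z_3 + h_3(z_2)$, into the prescribed decay $e^{-\rho(z)^{1/4}}$. Since $|h_3(z_2)|$ is bounded uniformly by $M = |\lambda_3| + \eta/3$, one has $|w| \geq |z_3| - M$, so for $|z_3|$ above an absolute threshold $|w|^{1/4}$ exceeds a fixed positive multiple of $|z_3|^{1/4}$; choosing the Arakelian error for $h_1$ with slightly stronger decay, say $e^{-C|w|^{1/4}}$ for suitable $C > 1$, then yields an estimate of the required form $(\eta/3) e^{-\rho(z)^{1/4}}$, using $\rho(z) \leq |z_3|$. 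The quantitative hypothesis $\eta < \delta/2$ is used precisely to keep the perturbed argument of $h_1$ inside the Arakelian-valid half-plane with safety margin $\delta/2$, which is the only place in the case analysis where the sharper constraint on $\eta$ enters.
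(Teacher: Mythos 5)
Your proposal is correct and is essentially the paper's own proof: the same three Arakelian--Gauthier functions with thresholds $a_1$, $a_2$, and the shifted threshold $a_3-\lambda_3$, composed into the very same automorphism $(z_1+h_1(z_3+h_3(z_2)),\,z_2+h_2(z_1),\,z_3+h_3(z_2))$ (which the paper writes instead as the inverse of a composition of type-$1$ shifts), followed by the same half-plane check on the perturbed argument of $h_1$ and the same comparison of $e^{-|z_3+h_3(z_2)|^{1/4}}$ with $e^{-\rho(z)^{1/4}}$. The only cosmetic differences are your sign convention ($h_j\approx-\lambda_j$, which actually matches the theorem's statement more cleanly than the paper's) and your use of a strengthened decay constant $C$ in place of the paper's absorption of a multiplicative constant into the approximation accuracy.
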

 \begin{figure}[H]\label{fig2}
\includegraphics[height=4in,width=7in]{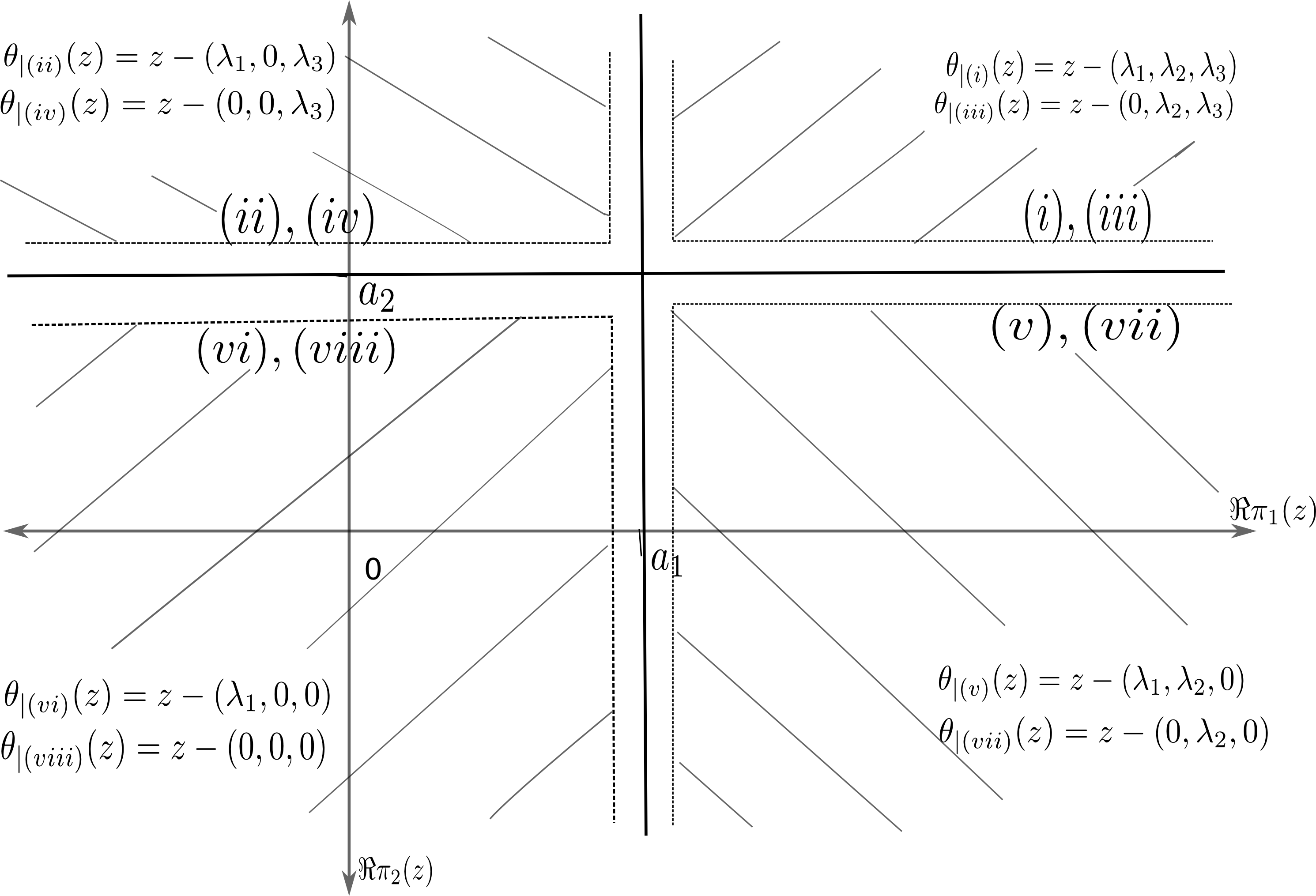}
\caption{}
\end{figure}
\noindent
Figure \ref{fig2} depicts the regions described by the various conditions in the theorem above. These regions are tube domains
since their defining conditions involve only the real parts of the coordinate functions. It is therefore sufficient to describe
the base of the regions. Note that the base is a domain in $\mbb R^3$ where the coordinates are $\Re \pi_1(z),\Re \pi_2(z)$ and
$\Re \pi_3(z).$ The $\Re \pi_1(z)$ and $\Re \pi_2(z)$ axes had been indicated while the $\Re \pi_3(z)$ axis is to be understood as
pointing outwards. With this convention we have:
\begin{itemize}
\item For region (i) and (ii), $\Re \pi_3(z) \ge a_3+ \delta,$
\item For region (iii) and (iv), $\Re \pi_3(z) \le a_3- \delta,$
\item For region (v) and (vi), $\Re \pi_3(z) \ge a_3- \lambda_3+\delta,$
\item For region (vii) and (viii), $\Re \pi_3(z) \le a_3-\lambda_3- \delta.$
\end{itemize}
\par
\begin{proof}
With $\delta>0$, $0 < \eta < \delta/2$ and $n \gg 1$ there exist entire
functions $f_n^j$ for every $1 \leq j  \leq 3$ satisfying
\begin{align*}
 \mbox{(i)} \hspace{10 mm} &|f_n^1(z)-\lambda_1| \leqslant\frac{1}{n}
e^{-|z|^{1/4}} <\eta e^{-|z|^{1/4}} ~~\mbox{if}~~\Re z\geqslant
a_3-\lambda_3+\delta/2 ,~\mbox{and} \\
&|f_n^1(z)-0| \leqslant \frac{1}{n} e^{-|z|^{1/4}}<\eta
e^{-|z|^{1/4}}~~\mbox{if}~~ \Re z\leqslant a_3-\lambda_3-\delta/2. &\\
\mbox{(ii)} \hspace{10 mm} &|f_n^j(z)-\lambda_n| \leqslant \frac{1}{n}
e^{-|z|^{1/4}} <\eta e^{-|z|^{1/4}}~~\mbox{if}~~\Re z\geqslant a_{j-1}+\delta/2
, ~\mbox{and}\\
&|f_n^j(z)-0| \leqslant \frac{1}{n} e^{-|z|^{1/4}} <\eta
e^{-|z|^{1/4}}~~\mbox{if}~~\Re z\leqslant a_{j-1}-\delta/2, ~~\mbox{where}~~
j=2,3.
\end{align*}

The existence of such entire functions with the above
properties follows from the Arakelian- Gauthier theorem \cite{AG} (see \cite{DE} as well) which says that
if $V \subset \mbb C \cup \{\infty \}  \simeq \mbb P^1 $ is a closed subset of $\mbb C$, such that its
complement is connected and locally connected at $\{\infty\}$ then any continuous function
$h:V \rightarrow \mathbb{C}$ which is  holomorphic in the interior of $V$ can be
uniformly approximated on $V$ by entire functions with any apriori specified
accuracy. More precisely, if $\omega$ is a positive continuous function on
$\mathbb{R}^+$ such that
\[
    \int\limits_1^{\infty} \frac{|{\log\omega(t)}|}{t^{3/2}} dt < +\infty
\]
then there exists an entire function $g$ over $\mathbb{C}$ such that
$|{h(z)-g(z)}| \leqslant \omega(|z|)$ for every $z \in V.$
\par
\medskip
\noindent
We apply this theorem to construct, say $f_n^1$. Take $V$ to be the union of the
closed disjoint half planes specified by the condition in (i). The function
\begin{align*}
h(z)= \left\{
           \begin{array}{ll}
             \lambda_1,&\hbox{if $\Re z \geq a_3-\lambda_3+\delta/2$;} \\
             0,&\hbox{if $\Re z \leq a_3-\lambda_3-\delta/2$.}
           \end{array}
         \right.
\end{align*}
can therefore be approximated by entire functions $f_n^1$ that satisfy the
desired conditions. A similar argument shows the existence of $f_n^2$ and
$f_n^3.$
Define $\theta_n \in {\rm Aut}_1(\mbb{C}^3)$ as
$\theta_n(z_1,z_2,z_3)=(v_1,v_2,v_3)$ where
\begin{align*}
v_1 &= z_1 + f_{n}^1(v_3),\\
v_2 &= z_2 + f_{n}^2(z_1), \\
v_3 &= z_3 + f_{n}^3(z_2).
\end{align*}
A straight forward calculation shows that $\theta_n= (S_3 \circ S_3 \circ
S_1)^{-1}$ where each $S_j$ is a type 1 shift of the form
\[
 S_j(z_1,z_2,z_3)=(z_2,z_3,z_1+f_n^j(z_3)) ~\mbox{for all}~~ 1\leq j \leq 3.
\]
\noindent
Now let $z \in \mbb{C}^3$ be such that $\Re\pi_{1}(z)\geqslant a_{1}+\delta ,\;
\Re\pi_{2}(z)\geqslant a_{2}+\delta ,\; \Re\pi_3(z)\geqslant
a_3-\lambda_3+\delta.$
Then $$|f_n^j(z_{j-1})-\lambda_j|=|\pi_j(\theta_n(z))-\pi_j(z)+\lambda_j|< \eta
e^{-\rho(z)^{1/4}}$$
for $j=2,3$ . So
$$\Re v_3\geqslant \Re z_3-\lambda_3-\eta\geqslant a_3-\lambda_3+\delta/2.$$
Thus
$$|\pi_1(\theta_n(z))-\pi_1(z)+\lambda_1|=|f_n^1(v_3)-\lambda_1| <
e^{-|v_3|^{1/4}}/n.$$ 
Since $|v_3|>|z_3|-|\lambda_3|-1$ and $t^{1/4}-(t-\lambda_3
-1)^{1/4}\to 0$ as $t \to \infty$ there exists a constant $C>0$ such that
\[e^{-|v_3|^{1/4}}<C e^{-|z_3|^{1/4}}.\]
If we choose $n$ large enough so that
$C/n<\eta$ and $1/n<\eta$, then $\theta_n$ satisfies condition (i) of the theorem.
Similarly it can be shown that for large $n$, $\theta_n$ satisfies all the other
conditions.
 \end{proof}

For $k\geqslant3$ and $n\geqslant 1$, let
\begin{align*}
\Lambda_j^1(n)=\Big\{z\in \mbb{C}^k: \Re\pi_j(z)>n\Big\}
~\mbox{and }\Lambda_j^0(n)=\Big\{z\in \mbb{C}^k: \Re\pi_j(z)<n\Big\}
\end{align*}
for $1 \leq j \leq k $ and define $2^k$ subsets of $\mbb{C}^k$ as
\begin{align*}
&Y^n(i_1,i_2,\hdots,0,0)=\bigcap_{j=1}^{k-2}\Lambda_j^{i_j}(n)\cap\Lambda_{k-1}
^0(n)\cap \Lambda_k^0(n-1),\\
&Y^n(i_1,i_2,\hdots,0,1)=\bigcap_{j=1}^{k-2}\Lambda_j^{i_j}(n)\cap\Lambda_{k-1}
^0(n)\cap \Lambda_k^1(n-1),\\
&Y^n(i_1,i_2,\hdots,1,0)=\bigcap_{j=1}^{k-2}\Lambda_j^{i_j}(n)\cap\Lambda_{k-1}
^1(n)\cap \Lambda_k^0(n),\mbox{~and}\\
&Y^n(i_1,i_2,\hdots,1,1)=\bigcap_{j=1}^{k-2}\Lambda_j^{i_j}(n)\cap\Lambda_{k-1}
^1(n)\cap \Lambda_k^1(n),
\end{align*}

where $i_j \in \{0,1\}$ and $1 \le j \le k-2.$

\medskip

Let \[\widetilde{Y}^n=\Big\{z\in \mbb{C}^k: z\in
Y^n(i_1,i_2,\hdots,i_k),\;i_j\in\{0,1\}\; ~\mbox{and}~ 1 \le j \le k\Big\}.\] Define
$P^n:\widetilde{Y}^n \to \mbb C^k$ by

 \[
P^n(z)= z-(i_k,i_1,i_2,\hdots,i_{k-1})
\]
for $z \in {Y^n(i_1,i_2,\hdots,i_k)}.$
\par
\medskip
\noindent
It is evident that the map $P^n$ is injective on $\widetilde{Y}^n$
and $P^n(z)=z$ on $Y^n(0,0,\hdots,0)$ for every $n.$
Also $$P^n (z)= z-(1,1,\hdots ,1)$$
for $z \in {Y^n(1,1, \hdots,1)},$ and so we have
\begin{align}\label{eq0}
P^n(Y^n(1,1,...,1))=Y^{n-1}(1,1,\hdots,1) 
\end{align}
for every $ n \ge 2.$
Let $l_j\in \mbb N$ for $1 \le j \le k-1$ and $l_k \in \{0\} \cup \mbb{N}.$ Define
$\Gamma_j^{l_j},\Gamma_k^{l_k} \subset \mbb{C}^k$ as
\begin{enumerate}
\item[(i)]$\Gamma_j^{l_j}=\Big\{z\in \mbb{C}^k: l_j-1<\Re\pi_j(z)<l_j ~\mbox{if}~
l_j>1~\mbox{else}~\Re\pi_j(z)<1 \Big\}$ for all $1 \le j \le k-1.$
\item[(ii)]$\Gamma_k^{l_k}=\Big\{z\in \mbb{C}^k: l_k-1<\Re\pi_k(z)<l_k ~\mbox{if}~
l_k>0~\mbox{else}~\Re\pi_k(z)<0 \Big\}.$
\end{enumerate}
Let $\Delta(l_1,\hdots,l_k)=\bigcap\limits_{j=1}^k \Gamma_j^{l_j}$ and
\[
\widetilde{\Delta}= \bigcup \Delta(l_1,l_2, \hdots ,l_k)
\]
where the union is taken over all possible $l_j \ge 1$ for $1 \le j \le k-1$ and $l_k \ge 0.$ 
\par
\medskip
\noindent
Then $\widetilde{\Delta}\subset\widetilde{Y}^n$ and hence the map $P^n$ is
defined on $\widetilde{\Delta}$ for every $n.$
Also note that $P^n(\widetilde{\Delta})\subset \widetilde{\Delta}.$
\par
\begin{figure}[H] \label{fig3}
\includegraphics[height=4in,width=6in]{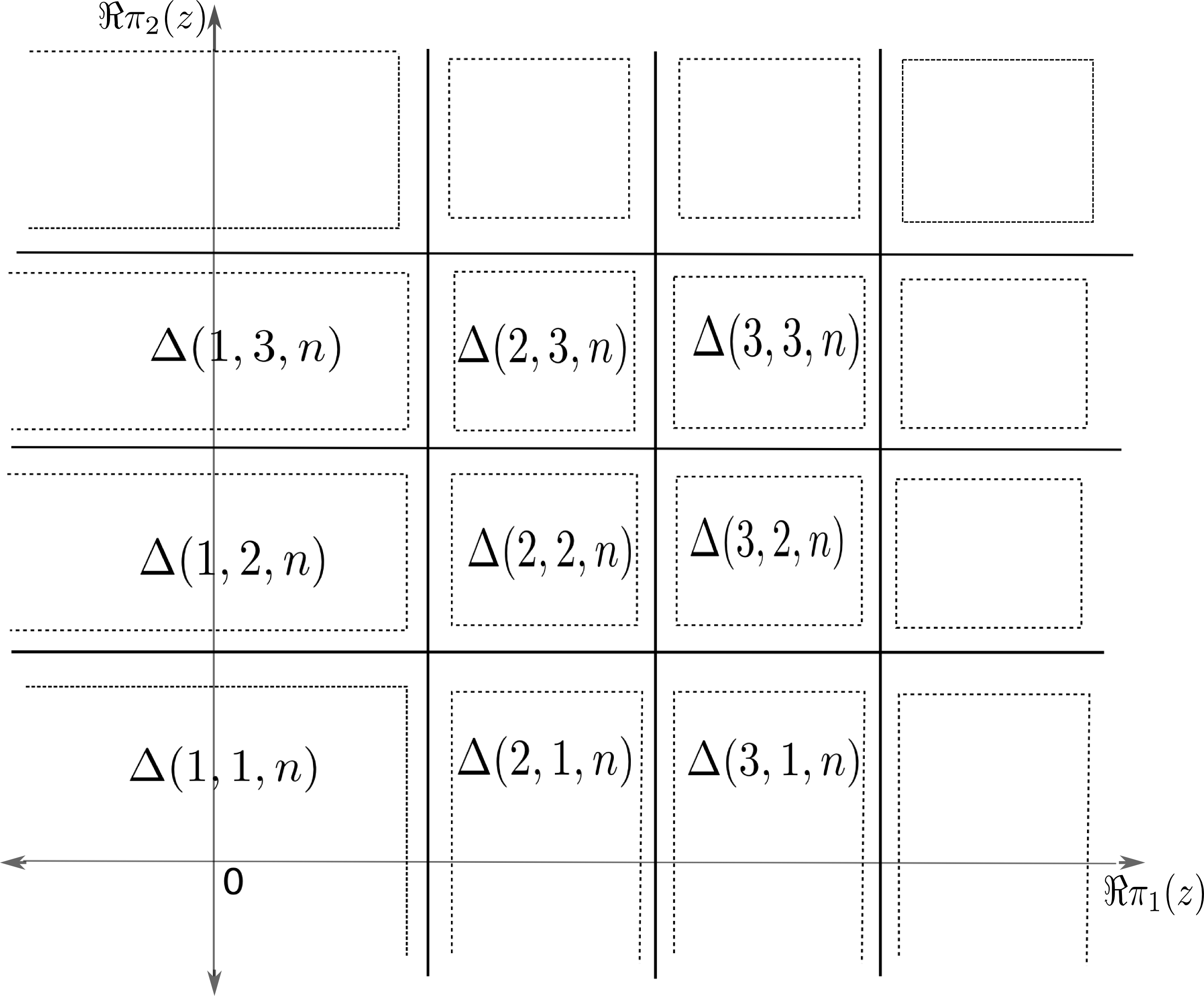}
\caption{}
\end{figure}
\noindent
Figure 3 here represents the sets $\De(l_1,l_2, \hdots, l_k)$ when $k=3.$
\par
\medskip
\noindent
Now suppose that $z\in \widetilde{\Delta}$. Then $z \in \Delta(l_1,\hdots,l_k)$
for some $l_j \ge 1$ for $1 \le j \le k-1$ and $l_k \ge 0$. If $l_z=\max\{l_1,l_2,\hdots,l_k\}$ and $m \ge l_z$, then
$z \in Y^m(0,\hdots,0).$ So
\[
P^m(z)=z~
\]
whenever $m>l_z.$
\par
\medskip
\noindent
Therefore the map defined by $S(z)=P^1\circ P^2 \circ \hdots\circ P^{l_z}(z)$ is well
defined for every $z\in \widetilde{\Delta}$ and hence on $\widetilde{\Delta}.$
Let $\Omega=S(\widetilde{\Delta}).$

\begin{lem}\label{lem1}
$\Omega \cap \Big\{ z \in \mbb{C}^k: \Re\pi_j(z)>0 ~{\rm for}~
1 \le j \le k \Big \}=\emptyset.$
\end{lem}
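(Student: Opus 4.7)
The plan is to argue by contradiction. Suppose there is a point $w = S(z) \in \Omega$ with $\Re \pi_j(w) > 0$ for every $1 \le j \le k$, where $z \in \Delta(l_1, \ldots, l_k)$ and $l_z = \max_j l_j$. Define the iterates $w^{(m)} = P^m \circ P^{m+1} \circ \cdots \circ P^{l_z}(z)$ for $1 \le m \le l_z + 1$, so that $w^{(l_z+1)} = z$, $w^{(1)} = w$, and $w^{(m)} = P^m(w^{(m+1)})$. Each $w^{(m+1)}$ lies in $\widetilde\Delta \subset \widetilde{Y}^m$, and the injectivity of $P^m$ on $\widetilde{Y}^m$ determines a unique tuple $(i_1^{(m)}, \ldots, i_k^{(m)}) \in \{0, 1\}^k$ for which $w^{(m+1)} \in Y^m(i_1^{(m)}, \ldots, i_k^{(m)})$ and $w^{(m+1)} = w^{(m)} + (i_k^{(m)}, i_1^{(m)}, \ldots, i_{k-1}^{(m)})$.

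The central assertion is that the positivity hypothesis forces $(i_1^{(m)}, \ldots, i_k^{(m)}) = (1, 1, \ldots, 1)$ for every $m = 1, \ldots, l_z$. I would prove this by induction on $m$, with the inductive hypothesis that $\Re \pi_j(w^{(m)}) > m - 1$ for every $j$. The crucial first step is to show $i_k^{(m)} = 1$. In either case $i_{k-1}^{(m)} \in \{0, 1\}$, one computes $\Re \pi_k(w^{(m+1)}) = \Re \pi_k(w^{(m)}) + i_{k-1}^{(m)}$ and checks it against the cutoff for $\Lambda_k$, which is $m - 1$ when $i_{k-1}^{(m)} = 0$ and $m$ when $i_{k-1}^{(m)} = 1$; either way, the inductive hypothesis rules out $i_k^{(m)} = 0$. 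Once $i_k^{(m)} = 1$, a cascade sets in: $\Re \pi_1(w^{(m+1)}) = \Re \pi_1(w^{(m)}) + 1 > m$ forces $i_1^{(m)} = 1$, then $\Re \pi_2(w^{(m+1)}) = \Re \pi_2(w^{(m)}) + 1 > m$ forces $i_2^{(m)} = 1$, and so on through $i_{k-1}^{(m)} = 1$. This proves the claim and also advances the inductive hypothesis to level $m + 1$, since $\Re \pi_j(w^{(m+1)}) > m$.

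With this claim in hand, iterating $l_z$ times gives $z = w^{(l_z + 1)} = w^{(1)} + (l_z, l_z, \ldots, l_z)$, whence $\Re \pi_j(z) > l_z$ for every $j$. However, for any $1 \le j \le k - 1$, $z \in \Delta(l_1, \ldots, l_k)$ forces $\Re \pi_j(z) < l_j \le l_z$, which is the desired contradiction.

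The main obstacle I anticipate is the asymmetric definition of $Y^n$ in the $k$-th coordinate, whose threshold depends on $i_{k-1}$ through the distinction between $\Lambda_k^{i_k}(n)$ and $\Lambda_k^{i_k}(n-1)$. This very asymmetry is what allows the cascade to begin at $i_k$ (rather than stall at $i_{k-1}$), since it is the mechanism by which $i_k^{(m)} = 0$ is ruled out under both possible values of $i_{k-1}^{(m)}$. A careful case split on $i_{k-1}^{(m)}$ in the first step of the induction is the key technical point that unlocks the rest of the argument.
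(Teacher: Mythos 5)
Your proposal is correct and follows the same route as the paper: pull the point back through $P^1,\dots,P^{l_z}$, show each pullback is forced into $Y^m(1,\dots,1)$, and contradict $z\in\Delta(l_1,\dots,l_k)$ via $\Re\pi_j(z)>l_z$. Your induction with the case split on $i_{k-1}^{(m)}$ and the ensuing cascade is exactly the justification the paper leaves implicit when it passes from $P^n(Y^n(1,\dots,1))=Y^{n-1}(1,\dots,1)$ to a statement about the preimages, so your write-up is, if anything, more complete.
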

\begin{proof}
Let $z_0 \in \Omega \cap \{ z \in \mbb{C}^k: \Re\pi_j(z)>0 ~\mbox{for}
~1 \le j \le k\}$. Then there exists $w_0 \in \widetilde{\Delta}$ such that
$S(w_0)=z_0$. As $w_0 \in \widetilde{\Delta}$ it follows that $w_0 \in
\Delta(l_1,l_2,\hdots,l_k)$ for some $k$-tuple $(l_1,l_2,\hdots ,l_k)$ and if $l_{w_0}=\max\{l_1,l_2,\hdots,l_k\}$ then
$$z_0=S(w_0)=P^1\circ P^2 \circ \hdots\circ P^{l_{w_0}}(w_0),$$ i.e.,
$$w_0=(P^{l_{w_0}})^{-1}\circ\hdots\circ(P^2)^{-1}\circ(P^1)^{-1}(z_0).$$ Since
\[P^n(Y^n(1,1,...,1))=Y^{n-1}(1,1,\hdots,1)
\]
for every $n \ge 2$ and when $n=1,$ it follows that
\[
P^1(Y^1(1,1,\hdots,1))=\{z \in \mbb{C}^k: \Re\pi_j(z)>0 ~\mbox{for} ~1 \le j \le k \}
\]
which implies $\Re\pi_j(w_0)>l_{w_0} ~\mbox{for}
~1 \le j \le k$. This is a contradiction!
\end{proof}
\medskip
Define $T$ on $\Omega$ as $T=S^{-1}_{|\Omega}$ and let $A$ and $B$ be sets
defined as
\begin{equation*}
 A=\Big\{z\in \mbb{C}^k\setminus\Omega: \min_{1 \le l \le k}\{\Re\pi_l(z)\}\leqslant
0\Big\},~B=\mbb{C}^k\setminus\widetilde{\Delta}.
\end{equation*}
\begin{lem}
Both $A$ and $B$ have empty interior.
\end{lem}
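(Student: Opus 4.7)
For $B$, the observation is essentially combinatorial. By distributing, $\widetilde{\Delta} = \bigcap_{j=1}^{k}\bigcup_{l_j}\Gamma_j^{l_j}$, and each union $\bigcup_{l_j\ge 1}\Gamma_j^{l_j}$ equals the complement of the discrete set $\{z:\Re\pi_j(z)\in \mbb Z_{\ge 1}\}$ (respectively $\mbb Z_{\ge 0}$ when $j=k$). Hence $B=\mbb C^k\setminus\widetilde{\Delta}$ is a countable union of real affine hyperplanes $\{\Re\pi_j=n\}$, each of which is nowhere dense in $\mbb C^k$, so $B$ has empty interior.

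For $A$, my plan is to prove the stronger inclusion
\[
A\;\subseteq\;B\;\cup\;\bigcup_{j=1}^{k-1}\{z:\Re\pi_j(z)=0\},
\]
which is again a countable union of real hyperplanes and so has empty interior. This inclusion reduces to the key claim: every $z\in\widetilde{\Delta}$ with $\min_l\Re\pi_l(z)<0$ strictly lies in $\Omega$. (The remaining case $\min_l\Re\pi_l(z)=0$ is automatic: if the vanishing coordinate is $\pi_k$ then $z\in B$ by the definition of $\Gamma_k^{l_k}$, and otherwise $z$ lies on one of the listed hyperplanes.) To produce the required preimage $w=S^{-1}(z)\in\widetilde{\Delta}$, I would iteratively invert the $P^n$'s: starting with $z_0=z$, let $z_n$ be the unique point of the form $z_{n-1}+\sigma(i)$, with $\sigma(i)=(i_k,i_1,\ldots,i_{k-1})\in\{0,1\}^k$, that lies in $Y^n(i)$. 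If this iteration stabilizes at some $w$, then unwinding the composition gives $z=P^1\circ\cdots\circ P^{l_w}(w)=S(w)$, so $z\in\Omega$.

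The main obstacle is proving that the iteration terminates. I would track the auxiliary quantities $\beta_j(n):=\Re\pi_j(z_n)-n$. The update rule $z_n=z_{n-1}+\sigma(i_n)$ yields $\beta_j(n)-\beta_j(n-1)=\sigma(i_n)_j-1\in\{-1,0\}$, so every $\beta_j$ is non-increasing; the stabilization condition $\sigma(i_n)=0$ translates to each $\beta_j(n)$ lying below its threshold (essentially $\beta_j\le 0$ for $j\le k-1$, with a small correction for $j=k$ depending on $i_{k-1}$). The hypothesis $\beta_{l_0}(0)=\Re\pi_{l_0}(z)<0$ for some $l_0$ seeds a cyclic cascade along $\sigma$: once $\beta_{l_0}$ is pinned below zero the coordinate it feeds under the cyclic shift receives no further increments, so its $\beta$ decreases linearly to $-\infty$, which triggers the next link in the cascade, and so on around the $k$-cycle. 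After finitely many rounds every $\beta_j$ sits below its threshold, the shifts $\sigma(i_n)$ become zero, and the iteration stabilizes at some $w\in\widetilde{\Delta}$. The delicate case is $\beta_k(0)\in(-1,0]$, where $i_k$ can momentarily be $1$; this is resolved by noting that $i_k=1$ forces $i_{k-1}=0$, so the $(k-1)$-link of the cascade eventually pushes $\beta_{k-1}$ below zero and then $\beta_k$ below $-1$, after which $i_k$ is permanently zero.
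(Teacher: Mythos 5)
Your argument for $B$ is exactly the paper's (the paper simply calls it evident), and your argument for $A$ is essentially the paper's as well: the paper also reduces $A$ to a countable union of real affine hyperplanes (its sets $C_l$, where $\Re\pi_l(z)\in\mbb N\cup\{0\}$) by showing that a point off these hyperplanes with $\min_l\Re\pi_l(z)<0$ must lie in $\Omega$, and it produces the $S$--preimage by the same backward iteration $w_n=(P^n)^{-1}\circ\cdots\circ(P^1)^{-1}(z)$ together with the same cyclic cascade: a coordinate pinned below its (moving) threshold stops feeding the next coordinate under the shift, which therefore stabilizes and eventually falls below its own growing threshold, and so on around the $k$--cycle. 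Your bookkeeping via $\beta_j(n)=\Re\pi_j(z_n)-n$ is a cleaner phrasing of the paper's step ``$\Re\pi_{j_0}(w_1)<1$ implies $\Re\pi_{j_0}(w_n)<n$ and $\Re\pi_{j_0+1}(w_n)=\Re\pi_{j_0+1}(w_1)$.''

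One caveat, which your write-up shares with the paper: the exceptional set must be taken slightly larger than you state. The terminal point satisfies $\Re\pi_j(w)=\Re\pi_j(z)+c_j$ with $c_j\in\mbb Z_{\ge 0}$, and $c_j$ can exceed $|\Re\pi_j(z)|$, because coordinate $j$ keeps receiving increments for as long as coordinate $j-1$ sits above its threshold, which can persist for arbitrarily many steps when $\Re\pi_{j-1}(z)$ is large. Hence a coordinate of $z$ with \emph{negative integer} real part can stabilize at a \emph{positive integer} real part, so that $w\notin\widetilde{\Delta}$ and the argument fails to place $z$ in $\Omega$. Concretely, for $k=3$ and $z=(100.5,-2,50.5)\in\widetilde{\Delta}$ with $\min\Re\pi_l(z)=-2<0$, the chain stabilizes at $w=(151.5,149,50.5)\notin\widetilde{\Delta}$, and one checks that $z\notin\Omega$; so your key claim, and the inclusion $A\subseteq B\cup\bigcup_{j<k}\{\Re\pi_j=0\}$, are literally false. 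The repair is immediate: the cascade argument works verbatim whenever $\Re\pi_j(z)\notin\mbb Z$ for all $j$, giving $A\subseteq\bigcup_{j=1}^{k}\bigcup_{n\in\mbb Z}\{\Re\pi_j=n\}$, still a countable union of nowhere dense closed sets. The same enlargement is needed to make the paper's own proof airtight, since its assertion ``$\Re\pi_j(w)\notin\mbb N\cup\{0\}$'' has the identical loophole at negative integers.
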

\begin{proof}
That $B$ has empty interior is evident by the definition of $\widetilde{\Delta}.$ Now let
\begin{align*}
C_l =\Big\{ z \in \mbb{C}^k: \min_{1 \le j \le k}\{\Re \pi_j(z))\}\leqslant 0 ~\mbox{and}~
\Re \pi_l(z) \in \mbb{N}\cup \{0\}\Big\}
\end{align*}
for every $1 \le l \le k$. Then observe that each $C_l$ has empty
interior.
Suppose there exists $z \in A$ such that $z \notin C_l$ for any
$l$, then
\begin{align*}
\Re \pi_j(z) \notin \mbb{N} \cup \{0\} ~\mbox{for} ~1 \le j \le k
~\mbox{and}~\min_{1 \le j \le k}\{\Re\pi_j(z)\}< 0.
\end{align*}
{\it Claim: }There exists $n_z \in \mbb{N}$ such that $$\max~\Big\{\Re
\pi_j((P^{n_z})^{-1}\circ\hdots\circ(P^2)^{-1}\circ(P^1)^{-1}(z))\Big\} \le
n_z.$$
Let $$w_n=(P^{n})^{-1}\circ\hdots\circ(P^2)^{-1}\circ(P^1)^{-1}(z).$$
Then note that there exists $j_0 \in \{1,2,\hdots,k\}$ such that $\Re
\pi_{j_0}(w_1)<1.$ This implies that
$$ \Re \pi_{j_0}(w_n)<n ~\mbox{and}~\Re \pi_{j_0+1}(w_1)=\Re \pi_{j_0+1}(w_n)$$
for $n > 1.$ When $j_0=k,$ then the above conditions are to be understood as
$$ \Re \pi_{k}(w_n)<n ~\mbox{and}~\Re \pi_{1}(w_1)=\Re \pi_{1}(w_n)$$
for $n>1.$ Then $\Re \pi_{j_0+1}(w_n)$ is bounded, or there exists $n_1$ such that
$\Re \pi_{j_0+1}(w_{n_1}) \leqslant n_1$. Applying the same argument repeatedly
we see that there exists $n_z$ such that $$\max~\Big\{\Re
\pi_j((P^{n_z})^{-1}\circ\hdots\circ(P^2)^{-1}\circ(P^1)^{-1}(z))\Big\} \le
n_z.$$
Let $$w=(P^{n_z})^{-1}\circ\hdots\circ(P^2)^{-1}\circ(P^1)^{-1}(z)$$ then $$\Re
\pi_j(w) \notin \mbb{N} \cup \{0\} ~\mbox{for} ~1 \le j \le k$$ i.e., $w
\in \widetilde{\Delta}$ and $S(w)=z$, which is a contradiction!
\end{proof}
\begin{thm}\label{theoremA}
 Let $A,B$ be as above and $T$, $S$ the corresponding mappings on $\Omega$ and $\widetilde{\Delta}$ as mentioned
earlier. Then for a given 
$0 < \epsilon <e 1/2 $
 there exist an injective, volume preserving holomorphic map $F:\mbb{C}^k\to
\mbb{C}^k$ such that,
 \begin{enumerate}
 \item [(i)] If $\Re\pi_j(z)>\epsilon ~\mbox{for} ~1 \le j \le k$ then $z \notin
F(\mbb{C}^k).$
 \item [(ii)] $|F(z)-S(z)|<\epsilon$ for every $z\in \mbb{C}^k \setminus
B_{\epsilon/2}$ where $B_{\epsilon/2}:=\{z\in \mbb{C}^k:{\rm dist}(z;B)<\epsilon/2\}.$
 \item [(iii)] If $z \in\Omega$ such that ${\rm dist}(z;A)>\epsilon$ then $z\in
F(\mbb{C}^k)$ and $|F^{-1}(z)-T(z)|<\epsilon$.
\end{enumerate}
\end{thm}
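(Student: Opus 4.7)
The plan is to produce a sequence $\{\theta_n\}_{n \ge 1} \subset {\rm Aut}_1(\mbb C^k)$ such that each $\theta_n$ approximates the piecewise translation $P^n$ on $\widetilde Y^n$ with exponentially small error, and to take $F$ as the limit of $F_N = \theta_1 \circ \theta_2 \circ \cdots \circ \theta_N$. For each $n \ge 1$, I would apply Theorem \ref{thm1a} (and its $k \ge 4$ counterpart from the Appendix) with $a_j = n$ (or $n-1$ in those coordinates where the definition of $Y^n$ demands it), $\lambda_j = 1$, and small parameters $\delta_n, \eta_n > 0$ chosen so that $\sum \eta_n$ and $\sum \delta_n$ are both bounded by $\epsilon/C$ for a suitable $C$. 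The output is a $\theta_n \in {\rm Aut}_1(\mbb C^k)$ satisfying $|\theta_n(z) - P^n(z)| < \eta_n e^{-\rho(z)^{1/4}}$ on the $\delta_n$-interior of each of the $2^k$ cells of $\widetilde Y^n$.

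To verify convergence of $\{F_N\}$ on a compact $L \subset \mbb C^k$, I note that for all $n$ beyond some $N_0(L)$ one has $L \subset Y^n(0, \ldots, 0)$, on which $P^n = {\rm id}$; hence $|\theta_n - {\rm id}| < \eta_n$ on $L$, and a standard telescoping argument combined with control of the intermediate iterates yields a limit $F$. Lemma \ref{1} then gives injectivity, volume preservation, the convergence $F_N^{-1} \to F^{-1}$ on compacta of $F(\mbb C^k)$, and $|F_N^{-1}(z)| \to \infty$ uniformly on $\mbb C^k \setminus F(\mbb C^k)$.

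For (ii), given $z \in \mbb C^k \setminus B_{\epsilon/2}$, the point $z$ sits inside some $\Delta(l_1, \ldots, l_k)$ with margin exceeding $\epsilon/2$; set $l_z = \max_j l_j$. For $n > l_z$, $\theta_n$ is $O(\eta_n)$-close to the identity on $z$, and for $n \le l_z$ each $\theta_n$ tracks $P^n$, the intermediate points having total translation of size at most $l_z$ and thus staying away from the $\delta_n$-slabs around the integer walls. Accumulating errors gives $|F_N(z) - S(z)| < \epsilon$. A symmetric argument using $\theta_n^{-1}$ in place of $\theta_n$ and $T$ in place of $S$ proves (iii): for $z \in \Omega$ with ${\rm dist}(z, A) > \epsilon$, the iterates $F_N^{-1}(z)$ remain in a fixed compact set near $T(z)$, so $\{F_N^{-1}(z)\}$ is bounded and $z \in F(\mbb C^k)$ by Lemma \ref{1}(iv).

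The crux is (i), and this is where Lemma \ref{lem1} enters. By the identity (2.1), $(P^n)^{-1}$ acts as translation by $+(1, 1, \ldots, 1)$ on $Y^{n-1}(1, \ldots, 1) = \{z : \Re \pi_j(z) > n-1\}$, and any $z$ with $\Re \pi_j(z) > \epsilon > 0$ lies in $P^1(Y^1(1, \ldots, 1))$; inductively $(P^n)^{-1} \circ \cdots \circ (P^1)^{-1}(z) = z + (n, \ldots, n)$, each intermediate point remaining in $Y^{m-1}(1, \ldots, 1)$ with margin at least $\epsilon$. Since $\theta_n^{-1}$ tracks $(P^n)^{-1}$ to order $\eta_n e^{-\rho^{1/4}}$ and $\rho$ at the intermediate points grows linearly with $n$, the perturbed chain $F_N^{-1}(z)$ also escapes to infinity, forcing $z \notin F(\mbb C^k)$ by Lemma \ref{1}(iv). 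The principal difficulty in executing (i)--(iii) in a single construction is preventing the intermediate iterates, both forward and backward, from wandering into the width-$\delta_n$ transition slabs around the integer walls where Theorem \ref{thm1a} provides no approximation; I would control this by taking $(\eta_n)$ and $(\delta_n)$ to decay geometrically so that at every step the accumulated perturbation is dominated by the margin preserved from the previous step.
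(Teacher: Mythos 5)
Your overall architecture (approximate each piecewise translation $P^n$ by $\theta_n\in{\rm Aut}_1(\mbb C^k)$ via Theorem \ref{thm1a}, pass to the limit of $F_N=\theta_1\circ\cdots\circ\theta_N$, and invoke Lemma \ref{1}) is exactly the paper's, and your treatment of convergence and of part (ii) — forward tracking of $S_n=P^1\circ\cdots\circ P^n$ with geometrically decaying error budgets so that the intermediate iterates stay clear of the transition slabs — matches the paper's induction with the shrinking sets $\widetilde\Delta^n$.

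However, there is a genuine gap in your arguments for (i) and (iii): both rest on the assertion that $\theta_n^{-1}$ tracks $(P^n)^{-1}$ (equivalently, that $F_N^{-1}(z)$ stays near $T_N(z)$), and this does not follow from Theorem \ref{thm1a}. That theorem controls $\theta_n(w)-w$ only for $w$ lying in specified regions; to deduce anything about $\theta_n^{-1}(z)$ you must first know that the preimage $w=\theta_n^{-1}(z)$ lies in the region where the forward estimate applies — which is precisely what you are trying to establish. The argument as written is circular. The paper breaks this circularity with a Rouch\'e-type step: for $z$ with ${\rm dist}(z;A)>\epsilon$ (or with $\Re\pi_j(z)>\epsilon$ for all $j$ in case (i)) one sets $G_n(u)=F_n(T_n(z)+u)-z$ and shows $|G_n(u)-u|<\epsilon/2-\epsilon/2^{n+1}$ for $|u|\le\epsilon/2-\epsilon/2^{n+1}$, using only the \emph{forward} estimate $|F_n-S_n|$ at the points $T_n(z)+u$, which are known in advance to lie in $\widetilde\Delta^n$. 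Rouch\'e's theorem then produces $u_0$ with $G_n(u_0)=0$, i.e.\ $F_n^{-1}(z)=T_n(z)+u_0$ with $|u_0|$ small; from this one reads off both the boundedness of $\{F_n^{-1}(z)\}$ in case (iii) and its divergence (since $|T_n(z)|\to\infty$) in case (i), and Lemma \ref{1} finishes. You need to supply this existence-of-preimage step (Rouch\'e, degree theory, or a quantitative inverse function argument); without it the claims about $F_N^{-1}$ are unsupported.
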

\begin{proof}
Let $\theta_n$ be the element of ${\rm Aut}_1(\mbb{C}^k)$ which satisfies the
conditions of the Theorem \ref{thm1} with
$\lambda_1=\lambda_2=\hdots=\lambda_k=1$, $a_1=a_2=\hdots=a_k=n$ and
$\delta=\epsilon/2^{n+1}.$
Define $$\tilde{Z}^n=\Big\{z\in\mbb{C}^k:
\mbox{dist}(z;({\tilde{Y}^n})^{c})>{\epsilon}/{2^{n+1}}\Big\}$$ for every
$n\geqslant1$, where for a set $E \subset \mbb C^k, ~E^c$ denotes the complement
$\mbb C^k \setminus E.$ Then
\begin{equation}\label{a}
 |\theta_n(z)-P^n(z)|<{\epsilon}/{2^{n+1}}
\end{equation}
for all $ z \in \tilde{Z}^n$. Here $\widetilde{Y}^n$ and $P^n$ are as defined earlier. So by condition (viii) of
Theorem \ref{thm1} it follows that
\begin{equation*}
 \sum_{n=m+2}^{\infty}\|I-\theta_n\|_m < {\epsilon}/{2^{m+2}}
\end{equation*}
for $m\geqslant1.$ Then from Lemma 8.3 in \cite{DE} the sequence $F_n$ defined as
$$F_n=\theta_1 \circ \theta_2 \circ\hdots\circ\theta_n$$
converges uniformly on compact sets in $\mbb{C}^k$ to
an injective entire mapping $F$ in $\mbb{C}^k.$
\par
\medskip
\noindent{\it Claim:} The mapping $F$ so obtained is the required map.
\par
\medskip
\noindent
We will first need to prove the following auxiliary step. let
$$\widetilde{\Delta}^n=\Big\{z\in\mbb{C}^k:\mbox{dist}(z;B)>{\epsilon}/{2}-{\epsilon}/{2^{n+1}}\Big\}.$$
Note that $\mbb{C}^k\setminus B_{\epsilon/2}\subset
\widetilde{\Delta}^n \subset \widetilde{\Delta}$ and
$\widetilde{\Delta}^n \subset \tilde{Z}^n$ for every $n\geqslant 1$.
Now define maps $S_n$ on $\widetilde{\Delta}^n$ as $$S_n(z)=P^1\circ P^2\circ
\hdots\circ P^n(z),$$ for every $n$. Then for a given $z \in \mbb{C}^k \setminus
B_{\epsilon/2}$, $S_n(z)=S(z)$ for large values of $n$. We claim that:
\begin{equation}\label{ind}
|F_n(z)-S_n(z)|<{\epsilon}/{2}-{\epsilon}/{2^{n+1}}
\end{equation}
for $z\in\tilde{\Delta}^n ,~n\geqslant1.$ It follows from (\ref{a}) that  (\ref{ind}) holds when $n=1$. So
assume that (\ref{ind}) holds for some $n\geqslant1$ and we will show
that it holds for $n+1$ as well.
\medskip
\par
\noindent
\textbf{Step 1:}
$\theta_n(\widetilde{\Delta}^n) \subset \widetilde{\Delta}^{n-1}$  for
every $n \ge 1.$
\par
\medskip
\noindent
Note that $\widetilde{\Delta}^n$ can be described as:
{\small\begin{align*}
\widetilde{\Delta}^n=&\bigcap_{j=1}^{k-1}\Big\{z\in \mbb{C}^k:
l_j-1+{\epsilon}/{2}-{\epsilon}/{2^{n+1}}<\Re\pi_j(z)<
l_j-{\epsilon}/{2}+{\epsilon}/{2^{n+1}}, l_j \ge 2
,\Re\pi_j(z)<1-{\epsilon}/{2}+{\epsilon}/{2^{n+1}}\Big\}\\
&\bigcap\Big\{z\in \mbb{C}^k:
l_k-1+{\epsilon}/{2}-{\epsilon}/{2^{n+1}}<\Re\pi_k(z)<
l_k-{\epsilon}/{2}+{\epsilon}/{2^{n+1}}, l_k \ge 1
,\Re\pi_k(z)<-{\epsilon}/{2}+{\epsilon}/{2^{n+1}}\Big\}.
\end{align*}}
By Theorem \ref{thm1}, $$|\theta_n(z)-(z+(i_1,i_2,\hdots,i_k)|\leqslant
{\epsilon}/{2^{n+1}}$$ for all $z \in \widetilde{\Delta}^n$ where $i_j
\in\{0,1\}, ~\mbox{and}~1 \le j \le k.$
Then $$\Re\pi_j(z)-i_j-{\epsilon}/{2^{n+1}}\leqslant
\Re\pi_j(\theta_n(z))\leqslant \Re\pi_j(z)-i_j+{\epsilon}/{2^{n+1}}$$ for
$1 \le j \le k.$ Since $z \in \widetilde{\Delta}^n$ we have that
\begin{align*}
&l_j-1+{\epsilon}/{2}-{\epsilon}/{2^{n+1}}-i_j-{\epsilon}/{2^{n+1}}<
\Re\pi_j(\theta_n(z))<
l_j-{\epsilon}/{2}+{\epsilon}/{2^{n+1}}-i_j+{\epsilon}/{2^{n+1}}
\end{align*}
 when $l_j  \ge 2,$ or
$$\Re\pi_j(\theta_n(z))<
1-{\epsilon}/{2}+{\epsilon}/{2^{n+1}}-i_j+{\epsilon}/{2^{n+1}},
$$
where $i_j \in\{0,1\}~\mbox{and}~1 \le j \le k-1.$ Hence
\begin{align*}
l_j-1-i_j+{\epsilon}/{2}-{\epsilon}/{2^{n}}
<\Re\pi_j(\theta_n(z))<l_j-i_j-{\epsilon}/{2}+{\epsilon}/{2^{n}}
\end{align*}
where $l_j \ge 2,$ or $$\Re\pi_j(\theta_n(z))<
1-i_j-{\epsilon}/{2}+{\epsilon}/{2^{n}},$$
where $i_j \in\{0,1\}$.When $j=k$ we have
\begin{align*}
&l_k-1+{\epsilon}/{2}-{\epsilon}/{2^{n+1}}-i_k-{\epsilon}/{2^{n+1}}<
\Re\pi_k(\theta_n(z))<
l_k-{\epsilon}/{2}+{\epsilon}/{2^{n+1}}-i_k+{\epsilon}/{2^{n+1}}
\end{align*}
when $l_k  \ge 1$, or
$$\Re\pi_k(\theta_n(z))<
-{\epsilon}/{2}+{\epsilon}/{2^{n+1}}-i_k+{\epsilon}/{2^{n+1}},
$$
where $i_k \in\{0,1\}.$ Hence
\begin{align*}
l_k-1-i_k+{\epsilon}/{2}-{\epsilon}/{2^{n}}
<\Re\pi_k(\theta_n(z))<l_k-i_k-{\epsilon}/{2}+{\epsilon}/{2^{n}}
\end{align*}
when $l_k \ge 1,$ or $$\Re\pi_k(\theta_n(z))<
-i_k-{\epsilon}/{2}+{\epsilon}/{2^{n}},$$
where $i_k \in\{0,1\}.$ Thus note that by the definition of 
$\widetilde{\Delta}^{n-1}$ it follows that
$\theta_n(z)\in \widetilde{\Delta}^{n-1}$.\\
\par
\noindent
\textbf{Step 2:}
Assuming that (\ref{ind}) holds for some $n$, we will now show that it holds
for $n+1$, i.e.,
\begin{equation*}
|F_{n+1}(z)-S_{n+1}(z)|< {\epsilon}/{2}-{\epsilon}/{2^{n+2}}
\end{equation*}
for $z\in\widetilde{\Delta}^{n+1}$. Now $F_{n+1}(z)=F_n\circ\theta_{n+1}(z)$, and by Step 1,
$\theta_{n+1}(z) \in \widetilde{\Delta}^n$ if $z\in \widetilde{\Delta}^{n+1}$.
By the induction hypothesis it follows that
\begin{equation}\label{eq1}
|F_{n+1}(z)-S_n(\theta_{n+1}(z))|<{\epsilon}/{2}-{\epsilon}/{2^{n+1}}
\end{equation}
for $z\in\widetilde{\Delta}^{n+1}.$ Also $S_{n+1}(z)=S_n\circ P^{n+1}(z)$ and by Theorem \ref{thm1}
$$|\theta_{n+1}(z)-P^{n+1}(z)|<{\epsilon}/{2^{n+2}}$$ if
$z\in\widetilde{\Delta}^{n+1}.$ Since the maps $S_n$ by definition are
translation maps it follows that
\begin{equation}\label{eq2}
 |S_n(\theta_{n+1}(z))-S_{n+1}(z)|<{\epsilon}/{2^{n+2}}
 \end{equation}
for $z\in\widetilde{\Delta}^{n+1}.$ Thus from (\ref{eq1}) and (\ref{eq2}) we get
 \begin{equation*}
|F_{n+1}(z)-S_{n+1}(z)|< {\epsilon}/{2}-{\epsilon}/{2^{n+2}}
\end{equation*}
for $z\in\widetilde{\Delta}^{n+1}.$ Hence (\ref{ind}) holds for every $n\geqslant1.$
\par
\medskip
\noindent
If $z \in \mbb{C}^k \setminus B_{\epsilon/2}$ then $z\in
\widetilde{\Delta}^n$ for all $n\geqslant1$, and $S_n(z)=S(z)$ for sufficiently large
$n$, i.e., for every $n\geqslant n_z, ~n_z\in \mathbb{N}$. So
\begin{align*}
& |F_n(z)-S(z)|<{\epsilon}/{2}-{\epsilon}/{2^{n+1}}
\end{align*}
for every $n \geqslant n_z.$ This implies \[|F(z)-S(z)|<\epsilon.\]
Hence (ii) follows.
\par
\medskip
To prove (iii), define maps $T_n$ on $S_n(\tilde{\Delta}^n)$ as $T_n=S_n^{-1}$.
Let $z\in\Omega$ be such that $\mbox{dist}(z;A)\geqslant \epsilon$. Then
 \begin{align*}
 &\mbox{dist}(z;A)>\epsilon-{\epsilon}/{2^{n}}
 \end{align*}
  for every $n$, means that
 \[\mbox{dist}(T_n(z);B)>\epsilon-{\epsilon}/{2^{n}}\]
 for every $n.$
 \par
 \medskip
 \noindent
Also, for the given $z\in\Omega$ there exist $n_z \in \mathbb{N}$ such that
$T_n(z)=T(z) ~\mbox{whenever} ~n\geqslant n_z.$
 Let $n\geqslant n_z$, and choose $u \in \mbb{C}^k$ such that
$|u|\leqslant{\epsilon}/{2}-{\epsilon}/{2^{n+1}}$. Since
$$\mbox{dist}(T_n(z);\beta)>\epsilon-{\epsilon}/{2^{n}} ~\mbox{for every}
~\beta \in B$$ it follows that
 \begin{equation*}
 \mbox{dist}(T_n(z)+u,\beta)\geqslant
\mbox{dist}(T_n(z),\beta)-\mbox{dist}(T_n(z)+u,T_n(z))>
{\epsilon}/{2}-{\epsilon}/{2^{n+1}}
 \end{equation*}
whenever $\beta \in B$. So $T_n(z)+u \in \tilde{\Delta}^n$ and the pair of points $T_n(z)$, $T_n(z)+u$ lie in the same
component, say $K$ of $\tilde{\Delta}^n.$ There exist $a
\in \mbb{C}^k$ such that $S_n(u)=u-a$ for every $u\in K.$ Set
$$G_n(u)=F_n(T_n(z)+u)-z $$ for $u \in \mbb C^k$. Again, if
$|u|\leqslant{\epsilon}/{2}-{\epsilon}/{2^{n+1}}$, then
$$S_n(T_n(z)+u)=T_n(z)+u-a=S_n(T_n(z))+u=z+u.$$ Hence
 \begin{align*}
|G_n(u)-u|&=|F_n(T_n(z)+u)-z-u|\\&=|F_n(T_n(z)+u)-S_n(T_n(z)+u)|<{\epsilon}/
{2}-{\epsilon}/{2^{n+1}}.
\end{align*}
Thus it follows from Rouche's Theorem that there exist $u_0 \in
D(0;{\epsilon}/{2}-{\epsilon}/{2^{n+1}})$ such that $G_n(u_0)=0$. This
implies that $$T_n(z)+u_0=F_n^{-1}(z)$$ and so
$$|F_n^{-1}(z)-T_n(z)|=|u_0|<{\epsilon}/{2}-{\epsilon}/{2^{n+1}}.$$ Since
$T_n(z)=T(z)$ we get
$$|F_n^{-1}(z)-T(z)|<{\epsilon}/{2}-{\epsilon}/{2^{n+1}}$$ for every
$n\geqslant n_z.$ Thus $|F_n^{-1}(z)|<+\infty$ for every $n$ and by Lemma
\ref{1}, $z \in F(\mbb{C}^k)$ and $|F^{-1}(z)-T(z)|<\epsilon.$ Hence (iii) follows.
\medskip
\par
To prove (i), let $z\in\mbb{C}^k$ such that $\Re\pi_j(z)>\epsilon$ for every
$1 \le j \le k$ then by Lemma \ref{lem1}, $\mbox{dist}(z;A)\geqslant
\epsilon$, and $T_n(z)=z+(n,n,\hdots,n).$ Let
\begin{equation*}
\alpha_n =\Big\{z\in \mbb{C}^k:\min_{1 \le j \le k}\{\Re\pi_j(z)\}=n\Big\}
\end{equation*}
for every $n \ge 0.$ Then $T_n(\alpha_0)=\alpha_n$ and $\mbox{dist}(z;\alpha_0)\geqslant \epsilon$,
so $$\mbox{dist}(T_n(z);\alpha_n)\geqslant \epsilon
>\epsilon-{\epsilon}/{2^{n}}.$$ For some $n$ choose $u \in \mbb{C}^k$ such
that $|u|\leqslant{\epsilon}/{2}-{\epsilon}/{2^{n+1}}$. Since
$$\mbox{dist}(T_n(z);\beta)>\epsilon-{\epsilon}/{2^{n}}
$$ for every $\beta \in \alpha_n,$ it follows that
\begin{equation*}
\mbox{dist}(T_n(z)+u,\beta)\geqslant
\mbox{dist}(T_n(z),\beta)-\mbox{dist}(T_n(z)+u,T_n(z))>
{\epsilon}/{2}-{\epsilon}/{2^{n+1}},
 \end{equation*}
 $~\mbox{whenever} ~\beta \in
\alpha_n.$ So for $|u|\leqslant {\epsilon}/{2}-{\epsilon}/{2^{n+1}}$, the pair of points $T_n(z)$ and
$T_n(z)+u$ lie in $Y^n(1,1,\hdots,1)$, i.e., in the same component, and \[S_n(u)=u-(n,n,\hdots,n)\] for $u \in Y^n(1,1,\hdots,1).$ Set $$G_n(u)=F_n(T_n(z)+u)-z$$ for $u \in \mbb{C}^k$. Again if
$|u|\leqslant{\epsilon}/{2}-{\epsilon}/{2^{n+1}}$, then
$$S_n(T_n(z)+u)=T_n(z)+u-(n,n,\hdots,n)=S_n(T_n(z))+u=z+u.$$ Hence
 \begin{align*}
|G_n(u)-u|&=|F_n(T_n(z)+u)-z-u|\\&=|F_n(T_n(z)+u)-S_n(T_n(z)+u)|<{\epsilon}/
{2}-{\epsilon}/{2^{n+1}}.
\end{align*}
Thus it follows from Rouche's Theorem that there exist $u_0 \in
D(0;{\epsilon}/{2}-{\epsilon}/{2^{n+1}})$ such that $G_n(u_0)=0$. This
implies that $T_n(z)+u_0=F_n^{-1}(z)$, i.e.,
$$|F_n^{-1}(z)-T_n(z)|<{\epsilon}/{2}-{\epsilon}/{2^{n+1}}$$ for every
$n\in\mathbb{N}.$ Now $|T_n(z)|\to +\infty$, hence $|F_n^{-1}(z)| \to +\infty$
as $n\to\infty.$ Thus by Lemma \ref{1}, $z \notin F(\mbb{C}^k).$
\end{proof}
\hspace{5 mm}
\par
\begin{proof}[Proof of Theorem \ref{thm1.1}]
 Set $\alpha=(1+\epsilon)^{{1}/{n}}(\cos(\pi/n))-1$ and note that
$0<\alpha<\epsilon$. Choose $m \ge 1$ such that $m\geqslant
\alpha+\epsilon+2.$ Let $F$ be the injective entire mapping satisfying the
conditions of Theorem \ref{theoremA} with $\epsilon=\alpha/2m.$ Then $F$
satisfies the following properties:
 \begin{enumerate}
   \item[(i)] If $\Re\pi_j(z)\geqslant \alpha/2m $ for every $1 \le j \le k$
then $z \notin F(\mbb{C}^k).$ \\
   \item[(ii)] If  $\Re\pi_j(z)\leqslant1-\alpha/2m ~\mbox{for every}~1 \le j \le k-1 $
and $\Re\pi_k(z)\leqslant -\alpha/2m$ then $z\in F(\mbb{C}^k).$\\
   \item[(iii)] If $\Re\pi_{1}(z)\leqslant -\alpha/2m$, $\Re\pi_j(z)\leqslant
1-\alpha/2m ~\mbox{for every}  ~2 \le j \le k-1$ and  \\ $\alpha/2m\leqslant
\Re\pi_k(z)\leqslant 1-\alpha/2m$ then $z\in F(\mbb{C}^k).$ \\
   \item[(iv)] Fix an $l$ such that $1 \le l \le k-2$. If 
\begin{align*}
\alpha/2m \leqslant &\Re\pi_i(z)\leqslant 1-\alpha/2m ~\mbox{for every}~1 \le i \le l, \\
&\Re\pi_{l+1}(z)\leqslant -\alpha/2m, \\ &\Re\pi_j(z)\leqslant 1-\alpha/2m
~\mbox{for every} ~l+2 \le j \le k-1,~{\rm and}\\ \alpha/2m\leqslant
&\Re\pi_k(z)\leqslant 1-\alpha/2m
\end{align*}
 then $z\in F(\mbb{C}^k).$
 \end{enumerate}
 Property (i) follows directly from 
Theorem \ref{theoremA}(a).
 \par
 \medskip
 \noindent
 Now consider the set $\Delta(1,1,\hdots,1,0)\subset \widetilde{\Delta}$ and note that
$S_{|\Delta(1,1,\hdots,1,0)}=P^1\equiv\mbox{Identity}$. So if $z$ satisfies
property (ii) then $$z \in S(\Delta(1,1,\hdots,1,0))~\mbox{and}
~\mbox{dist}(z;(S(\Delta(1,1,\hdots,1,0)))^c) > \alpha/2m.$$ Since $A \subset
(S(\Delta(1,1,\hdots,1,0)))^c$ and dist$(z;A)>\alpha/2m$, it follows from
Theorem \ref{theoremA}(c) that, $z \in F(\mbb{C}^k).$
 \par
 \medskip
 \noindent
 Again, consider $$\Delta(1,1,\hdots,1,1)=\{z\in \mbb{C}^k:
\Re\pi_j(z)<1, ~\forall ~1 \le j \le k-1 ,~\mbox{and} ~0<\Re\pi_k(z)<1\}$$ which is a subset
of $\widetilde{\Delta}$ and
$S_{|\Delta(1,1,\hdots,1,1)}(z)=P^1(z)=z-(1,0,\hdots,0)$. So if $z$ satisfies
property (iii) then $$z \in S(\Delta(1,1,\hdots,1,1))~\mbox{and}
~\mbox{dist}(z;(S(\Delta(1,1,\hdots,1,1)))^c) > \alpha/2m.$$ Since $A \subset
(S(\Delta(1,1,\hdots,1,1)))^c$ and dist$(z;A)>\alpha/2m$, it follows from
Theorem \ref{theoremA}(c) that, $z \in F(\mbb{C}^k)$.
 \par
 \medskip
 \noindent
 Similarly if $z$ satisfies property (iv) for any $1 \le l \le k-2$
means \begin{align*}
z \in S(\Delta(\underbrace{2,2,\hdots,2}_l,1,\hdots1,1))
\end{align*}
and $$~\mbox{dist}(z;(S(\Delta(2,2,\hdots,2,1,\hdots,1,1)))^c) > \alpha/2m.$$ So ${\rm
dist}(z;A)>\alpha/2m$, and $z \in F(\mbb{C}^k)$.\\

 Define an affine map $L:\mbb C^k \to \mbb C^k$ as
 \begin{equation*}
 L(z_1,z_2,...,z_k)=(-mz_1,-mz_2,\hdots,-mz_k)+(1+{\alpha}/{2},1+{
\alpha}/{2},\hdots,1+{\alpha}/{2}).
 \end{equation*}
  Let $H:\mbb C^k \to \mbb C^k$ be given by $H(z)=L\circ F(z).$ Then $H$
satisfies the following properties:
  \begin{enumerate}
   \item[(i)$'$] If $\Re\pi_j(z)\leqslant 1 $ for every $1 \le j \le k$ then $z
\notin H(\mbb C^k).$ \\
   \item[(ii)$'$] If  $\Re\pi_k(z)\geqslant-m+1+\alpha~\mbox{for all}~1 \le j \le k-1 $ and
$\Re\pi_k(z)\geqslant 1+\alpha$ then $z\in H(\mbb C^k).$\\
   \item[(iii)$'$] If $\Re\pi_{1}(z)\geqslant 1+\alpha$, $\Re\pi_j(z)\geqslant
-m+1+\alpha~\mbox{for all} ~2 \le j \le k-1$ and $-m+1+\alpha\leqslant
\Re\pi_k(z)\leqslant 1+\alpha$ then $z\in H(\mbb C^k).$\\
   \item[(iv)$'$] Fix $l$ such that $1 \le l \le k-2$. If
\begin{align*}
-m+1+\alpha \leqslant  &\Re\pi_i(z)\leqslant 1+\alpha ~\mbox{for all} ~1 \le i \le l, \\
&\Re\pi_{l+1}(z)\geqslant 1+\alpha, \\ &\Re\pi_j(z)\geqslant -m+1+\alpha~\mbox{for all}
~l+2 \le j \le k-1~\mbox{, and} \\-m+1+\alpha\leqslant &\Re\pi_k(z)\leqslant 1+\alpha
\end{align*}
then $z\in H(\mbb C^k).$
 \end{enumerate}
 This can be checked in a straight forward manner using the properties (i)-(iv) stated above and the definition of $L(z).$
 \par
 \medskip
 \noindent
Let $\psi(z) = (z_1^n,z_2^n,\hdots,z_k^n) $
and set $G=\psi \circ H.$
\par
\medskip
\noindent
 {\it Claim}: $G$ is the required mapping.\\
  \par
  \no
  Clearly, the fibre $G^{-1}(z)$ possesses at most $n^k$ elements. Now if $z\in \mbb
C^k$ then by property (i)$'$
  \begin{equation*}
  \max\Big\{\Re(\pi_1\circ H)(z),\Re(\pi_2\circ H)(z),\hdots,\Re(\pi_k\circ H)(z)\Big\}> 1
  \end{equation*}
  so that
  \begin{align*}
  &\max \Big\{|\Re(\pi_1\circ H)(z)|,|\Re(\pi_2\circ H)(z)|,\hdots,|\Re(\pi_k\circ
H)(z)|\Big\} > 1 ,
\end{align*}
and this gives
\begin{align*}
  &\max \Big\{|\Re(\pi_1\circ G)(z)|,|\Re(\pi_2\circ G)(z)|,\hdots,|\Re(\pi_k\circ
G)(z)|\Big\} > 1.
  \end{align*}
Hence $G(\mbb C^k)\cap \overline{D(0;1)}=\emptyset.$ \\
 \par
 Let $u=(\rho_1e^{i\theta_1},\rho_2e^{i\theta_2},\hdots,\rho_pe^{i\theta_k})$
with $|\theta_j|\leqslant \pi$ and $\rho_j\geqslant 0$  for every $1 \le j \le k$,
be a point in $\mbb C^k \setminus D(0;1+\epsilon).$\\
 \par
 \noindent
\textit{Case 1:} Suppose $\rho_k\geqslant 1+\epsilon$. Set
$v_k=\rho_k^{1/n}\cos(\theta_k/n)$. Then $$\Re
v_k\geqslant(1+\epsilon)^{1/n}\cos(\pi/n)=1+\alpha.$$ Let $\xi_k$ be the $n$-th
root of $e^{i\theta_j}$ such that $\Re\xi_j\geqslant 0$ and set
$v_j=\rho_j^{1/n}\xi_j$ for every $1 \le j \le k-1.$ So $\Re
v_j\geqslant0>-m+1+\alpha$  for every $1 \le j \le k-1.$
 Hence by property (ii)$'$, $v=(v_1,v_2,\hdots,v_k)\in H(\mbb C^k)$ and
$u=\psi(v)\in G(\mbb C^k).$\\
 \par
 \noindent
\textit{Case 2:} Suppose $\rho_1\geqslant 1+\epsilon$ and $\rho_k<1+\epsilon$.
Set $v_1=\rho_1^{1/n}\cos(\theta_1/n)$. Then $$\Re
v_1\geqslant(1+\epsilon)^{1/n}\cos(\pi/n)=1+\alpha.$$ Let $\xi_j$ be the $n$-th
root of $e^{i\theta_j}$ such that $\Re \xi_j\geqslant 0$ and set
$v_j=\rho_j^{1/n}\xi_j$ for every $2 \le j \le k-1.$ So $\Re
v_j\geqslant0>-m+1+\alpha$  for every $2 \le j \le k-1.$ Let $\xi_k$ be the $n$-th
root of $e^{i\theta_k}$ such that $\Re \xi_k\leqslant 0$ and set
$v_k=\rho_k^{1/n}\xi_k.$ So
$$-m+1+\alpha\leqslant-(1+\epsilon)<-(1+\epsilon)^{1/n}<\Re
v_k\leqslant0<1+\alpha.$$ Hence by property (iii)$'$,
$v=(v_1,v_2,\hdots,v_k)\in H(\mbb C^k)$ and $u=\psi(v)\in G(\mbb C^k).$\\
 \par
 \noindent
\textit{Case 3:} Fix $l \in \{1,2,\hdots,k-2\}$ and suppose that $\rho_{l+1}\geqslant
1+\epsilon$, $\rho_k<1+\epsilon$ and $\rho_p<1+\epsilon$ for every
$1 \le p \le l$. Set $v_{l+1}=\rho_{l+1}^{1/n}\cos(\theta_{l+1}/n)$. Then
 $$\Re v_{l+1}\geqslant(1+\epsilon)^{1/n}\cos(\pi/n)=1+\alpha.$$ Let $\xi_j$ be
the $n$-th root of $e^{i\theta_j}$ such that $\Re~\xi_j\geqslant 0$ and set
$v_j=\rho_j^{1/n}\xi_j$ for every $l+2 \le j \le k-1.$
  \par
  \medskip
  \noindent
So $\Re v_j\geqslant0>-m+1+\alpha$  for every $l+2 \le j \le k$. Let $\xi_p$
be the $n$-th root of $e^{i\theta_p}$ such that $\Re \xi_p\leqslant 0$. Set
$v_p=\rho_p^{1/n}\xi_p$ for every $1 \le p \le l$. Also let $\xi_k$ be the $n$-th
root of $e^{i\theta_k}$ such that $\Re \xi_k\leqslant 0$. Set
$v_k=\rho_k^{1/n}\xi_k.$ Thus
$$-m+1+\alpha\leqslant-(1+\epsilon)<-(1+\epsilon)^{1/n}<\Re
v_p\leqslant0<1+\alpha$$ for every $1 \le p \le l.$ Also similarly
$$-m+1+\alpha<\Re v_k\leqslant0<1+\alpha.$$
Hence by property (iv), $v=(v_1,v_2,\hdots,v_k)\in H(\mbb C^k)$ and
$u=\psi(v)\in G(\mbb C^k).$
\par
\medskip
\noindent
Therefore $\mbb C^k \setminus D(0;1+\epsilon)\subset G(\mbb C^k)$ and hence $G$ is the
required napping.
\end{proof}

\section{Proof of Theorem \ref{thm1.2}}
\noindent
In this section we will again use transcendental shift-like maps to construct an injective holomorphic map from $\mbb C^k$
to $\mbb C^k$ whose range satisfies the properties mentioned in Theorem \ref{thm1.2}. The range is then a
Fatou-Bieberbach domain in $\mbb C^k, ~k \ge 2$ with these properties.
\par
\medskip
\noindent
Recall the definition of the vertical strips $V_{\pm l}^a, V_0^a, V_{\pm (K+1)}^a$ as given before the statement of Theorem \ref{thm1.2}.
Note that the definition depends upon the parameters $a$ and $K$. First we construct an entire function on $\mbb C$ that can
be approximated with suitably chosen constants in these strips. This is used to construct a shift-like map, hence an element in
${\rm Aut}_1(\mbb C^k), k \ge 2.$ Call this Step 0. In Step 1, the parameter of definition of these strips is changed from $a$
to $a+2$, and similarly as before we use this entire function to construct an element of ${\rm Aut}_1(\mbb C^k).$ Proceeding
inductively in this way we create at Step $n$ (the parameter value is $a+2n$ now) an entire function on $\mbb C$ with prescibed
behaviour in $V_{\pm l}^{a+2n}, V_0^{a+2n}, V_{\pm (K+1)}^{a+2n}$ and hence an element in ${\rm Aut}_1(\mbb C^k).$ For every $n \ge 0$
we compose the $n$ elements in ${\rm Aut}_1(\mbb C^k)$ so obtained. The limit of the sequence of compositions is of interest to us.
The main job here is to define the constants appropriately on the strips at every step so that the sequence of compositions satisfies
some desired properties.

\begin{proof}[Proof of Theorem \ref{thm1.2}]
Define the following sets:

\begin{itemize}
\item $\bar{D}^a_l = \bar{D}^0_l(a) \cup \bar{D}^1_{\pm l}(a) \cup \bar{D}^{\pm 2}_l(a) \cup \ldots$, where for
each $n \in \mbb Z$ and $1  \le \vert l \vert \le K + 1$, $\bar{D}^n_l(a)$ is a disc of
radius $1/4$ centered at the point $(a + (4l -3)/2,
2n) \in \mbb C$, and
\medskip
\item $\bar{E}^a_l = \bar{E}^0_l(a) \cup \bar{E}^{\pm 1}_l(a) \cup \bar{E}^{\pm 2}_l(a) \cup \ldots$, where for
each $n \in \mbb Z$ and $1 \le \vert l \vert \le K + 1$, $\bar{E}^n_l(a)$ is a disc of
radius $1/4$ centered at the point $(a + (4l -3)/2,
2n+1) \in \mbb C$.
\end{itemize}
These are slightly thicker discs than the corresponding ones defined in Section 1.
\par \medskip \noindent
For any $a>0$ there exist entire functions $f_{a+i}, ~i \ge 0$, satisfying the following
properties:
\begin{align*}
 \mbox{(i)} \hspace{10 mm} &|f_{a+i}|<2^{-(4+i)}~\mbox{on}~
V_0^{a+i}~\mbox{for every}~ i \ge 0 ,\\
 \mbox{(ii)} \hspace{10 mm} &|f_{a+i}-M|<1~\mbox{on}~ V_l^{a+i}~\mbox{for every}~
i \ge 0 , ~\mbox{and}~1 \le l \le K+1 ,\\
 \mbox{(iii)} \hspace{10 mm} &|f_{a+i}+M|<1~\mbox{on}~ V_{-l}^{a+i}~\mbox{for
every}~ i \ge 0 , ~\mbox{and}~1 \le l \le K+1,\\
 \mbox{(iv)} \hspace{10 mm} &|f_{a+i}-M|<1~\mbox{on}~ \bar{D}_l^{a+i}~\mbox{for
every}~ i \ge 0 , ~\mbox{and}~1 \le |l| \le K+1,\\
\mbox{(v)} \hspace{10 mm} &|f_{a+i}-M|<2^{-(4+i)}~\mbox{on}~
\bar{E}_l^{a+i}~\mbox{for every}~ i \ge 0
~\mbox{and}~1 \le |l| \le K+1,\\
 \end{align*}
where $M$ is such that $M>2a+4K+5.$
\medskip
\par
\noindent The existence of these functions is assured by the Arakelian--Gauthier Theorem \cite{AG}, used similarly as in the proof of Theorem \ref{thm1a}.
Now define $\theta_{a+i}: \mbb C^k \to \mbb C^k$, $\theta_{a+i}(z_1,z_2,\hdots,z_k)=(u_1,u_2,\hdots,u_k)$ where
\begin{align*}
u_1 &= z_1 + f_{a+i}(z_2),\\
u_2 &= z_2 + f_{a+i}(z_3), \\
&\vdots\\
u_k &= z_k + f_{a+i}(u_1),
\end{align*}
for every $i \ge 1$ and let
$$F_n=\theta_a^{-1}\circ \theta_{a+2}^{-1}\circ\hdots \circ
\theta_{a+2(n-1)}^{-1}$$
for every $n \ge 1.$
\par
\medskip
\noindent
Note that 
\begin{align*}\theta_{a+2i}^{-1}(u_1,u_2, \hdots,u_k)&=(z_1,z_2,\hdots,z_k) \\
&=(u_1-f_{a+2i}(z_2), u_2-f_{a+2i}(z_3),\hdots,u_k-f_{a+2i}(u_1)).
\end{align*}

\noindent
{\it Claim:} $F_n$ converges uniformly on compact subsets of $\mbb C^k$ to $F: \mbb C^k \to \mbb C^k.$
\par
\medskip
\noindent
By Lemma 8.3 of \cite{DE} it suffices to show that
\[ \sum_{i=1}^\infty \| I-\theta_{a+2i}^{-1}\|_m< \infty\] for any $m>0$.
\par
\medskip
\noindent
Note that
\[\| I-\theta_{a+2i}^{-1}\|_m \le \max\Big\{ \|f_{a+2i}(u_1)\|_m,
\|f_{a+2i}(z_2)\|_m,\hdots, \|f_{a+2i}(z_k)\|_m\Big\}.\]
\medskip
\noindent
Pick $u=(u_1,u_2,\hdots,u_3) \in \mbb C^k$ with $|u|\le m$. Then there exists sufficiently large $i$ such that $a+2i>2m$,
which means that $u_j \in V_0^{a+2i}$ for all $1 \le j \le k$ and hence $|f_{a+2i}(u_j)|<{2^{-(4+2i)}}$. Also
since $|u_k|<m$,
\begin{align*}
 |z_k|<|u_k|+|f_{a+2i}(u_1)|<m+2^{-(2i+4)}<a+2i,
\end{align*}
i.e., $z_k \in V_0^{a+2i}$ and so
\[|f_{a+2i}(z_k)|<{2^{-(4+2i)}}.\]
Since
\[|z_{k-1}|<|u_{k-1}|+|f_{a+2i}(z_k)|< m+2^{-(2i+4)}<a+2i,\]
 $z_{k-1} \in V_0^{a+2i}$ and
\[|f_{a+2i}(z_{k-1})|<{2^{-(4+2i)}}.\]
By applying the same argument for $k-2,k-3, \hdots, 1$ we have
\[|f_{a+2i}(z_j)|<{2^{-(4+2i)}}\]
for every $1\le j \le k.$ Hence
\[\| I-\theta_{a+2i}^{-1}\|_m < {2^{-(2i+4)}}  \]
whenever $a+2i > m.$ Thus $F_n$ converges to some $F$ uniformly on compact subsets of $\mbb C^k.$
\par
\medskip
\noindent
 {\it Claim:} $F$ satisfies all the properties of Theorem \ref{thm1.2}.
 \par
 \medskip
 \noindent
{\bf Step 1:} We first show that
\[|F_n^{-1}(z)| \to \infty ~\mbox{if}~ z=(z_1,z_2,\hdots,z_k) \in V_{l_1}^a
\times V_{l_2}^a \times \hdots \times V_{l_k}^a\]
 whenever $1 \le |l_j| \le K+1.$
 \par
 \medskip
 \noindent
 {\it Claim:}
 \[\theta_{a}\Big(V_{l_1}^a \times V_{l_2}^a \times
\hdots \times V_{l_k}^a\Big) \subset V_{K+1}^{a+2} \times V_{K+1}^{a+2} \times \hdots \times
V_{K+1}^{a+2} \]
for every $1 \le l_j \le K+1.$
\par
\medskip
\noindent
Pick
 $z \in V_{l_1}^a \times V_{l_2}^a \times
\hdots \times V_{l_k}^a.$ Then $\Re z_j> a+1$ for every $1 \le j \le k$ and therefore $\Re f_a(z_j)> M-1.$
 Since  $$\Re u_j=\Re z_j+ \Re f_a(z_{j+1}),$$
 we have
 \[\Re u_j> a+3+2K=(a+2)+(2K+1)\]
i.e., $u_j \in V_{k+1}^{a+2}$
 for every $1 \le j \le k-1.$ When $j=k,$
 $$\Re u_k=\Re z_k+ \Re f_a(u_1).$$
 Since $u_1 \in V_{K+1}^{a+2}\subset V_{K+1}^{a}$, $\Re f_a(u_1)> M-1.$ Thus by a similar argument as above
 we have that $u_k \in V_{k+1}^{a+2}.$ Hence the claim.
 \par
 \medskip
 \noindent
Pick $z \in V_{K+1}^{a+2i} \times V_{K+1}^{a+2i} \times \hdots \times
V_{K+1}^{a+2i}$ for some $i \ge 1$. Then $\Re z_j>a+2i+1$ for every $1 \le j \le k$ and therefore $\Re f_{a+2i}(z_j)> M-1$. So if we apply the same
arguments as used before it follows that
\begin{equation}\label{2a}
\theta_{a+2i}\Big(V_{K+1}^{a+2i} \times V_{K+1}^{a+2i} \times \hdots \times
V_{K+1}^{a+2i}\Big) \subset V_{K+1}^{a+2(i+1)} \times V_{K+1}^{a+2(i+1)}\times
\hdots \times V_{K+1}^{a+2(i+1)}
\end{equation}
for every $i \ge 1.$
Thus $$|F_n^{-1}(z)|=|\theta_{a+2(n-1)}\circ\hdots\circ\theta_a(z)|\to
\infty$$ if $z \in V_{l_1}^a \times V_{l_2}^a \times \hdots \times V_{l_k}^a ~,
~1\le l_j \le K+1 .$
\par
\medskip
\noindent
 Similarly for $z \in V_{-l_1}^a \times V_{-l_2}^a \times \hdots \times
V_{-l_k}^a ~, ~1\le l_j \le K+1,$ we use the fact that
\[\Re f_a(z_i)< -(a+1)\] i.e., \[\Re f_a(z_j)<-M+1 ~\mbox{for every}
~1 \le j \le k\]
 to obtain
 \[ \theta_{a}\Big(V_{-l_1}^a \times V_{-l_2}^a \times \hdots \times V_{-l_k}^a\Big)
\subset V_{-(K+1)}^{a+2} \times V_{-(K+1)}^{a+2} \times \hdots \times V_{-(k+1)}^{a+2}\]
and also
\begin{equation*}
\theta_{a+2i}\Big(V_{-(K+1)}^{a+2i} \times V_{-(K+1)}^{a+2i} \times \hdots
\times V_{-(K+1)}^{a+2i}\Big) \subset V_{-(K+1)}^{a+2(i+1)} \times
V_{-(K+1)}^{a+2(i+1)}\times \hdots \times V_{-(K+1)}^{a+2(i+1)}
 \end{equation*}
 for every $i \ge 1.$
 Hence $$|F_n^{-1}(z)|=|\theta_{a+2(n-1)}\circ\hdots\circ\theta_a(z)|\to
\infty$$ if $z \in V_{-l_1}^a \times V_{-l_2}^a \times \hdots \times V_{-l_k}^a
~, ~1 \le l_j \le K+1.$
 \par
 \medskip
 \noindent
 {\bf Step 2:} We show that
 \[|F_n^{-1}(z)| \to \infty ~\mbox{if}~z=(z_1,z_2,\hdots,z_k) \in D_{l_1}^a
\times D_{l_2}^a \times \hdots \times D_{l_k}^a\]
 whenever $1 \le |l_j| \le K+1.$
 \par
 \medskip
 \noindent
 {\it Claim:}
 \[\theta_{a}\Big(D_{l_1}^a \times D_{l_2}^a \times
\hdots \times D_{l_k}^a\Big) \subset V_{K+1}^{a+2} \times V_{K+1}^{a+2} \times \hdots \times
V_{K+1}^{a+2} \]
for every $ 1 \le |l_j| \le K+1.$
\par
\medskip
\noindent
To see this note that if $z_j \in D_{l_j}^a$ then $$\Re z_j>-(a+2K+1),$$ for every $1 \le j \le k$. See Figure 1. Now
since,  $$\Re f_a(z_j)> M-1>2a+4K+4,$$
by similar arguments as before we get that
 \[ \Re u_j> -(a+2K+1)+2a+4K+4=(a+2)+(2K+1)\]
for every $~1 \le j \le k.$
Thus the claim.
\par
\medskip
\noindent
Now from (\ref{2a}) it follows
$|F_n^{-1}(z)|\to \infty$ if $z \in D_{l_1}^a \times D_{l_2}^a \times \hdots
\times D_{l_k}^a $ where $1 \le |l_j| \le K+1.$
 \par
 \medskip
 \noindent
 {\bf Step 3:} We show that
 \[|F_n^{-1}(z)| < \infty ~\mbox{if}~ z=(z_1,z_2,\hdots,z_k) \in E_{l_1}^a
\times E_{l_2}^a \times \hdots \times E_{l_k}^a.\]
 If $z_j \in E_{l_j}^a$ then $z_j \in E_{l_j}^{n_j}(a)$ for some $n_j \ge 0.$
 Let $u_j^n = \pi_j(F_n^{-1}(z))$ for $n \ge 0.$ Then
 \begin{align*}
 u_j^n=&u_j^{n-1}+f_{a+2(n-1)}(u_{j+1}^{n-1})
 \end{align*}
for every
$1 \le j \le k-1$, and
\begin{align*}
 u_k^n=&u_k^{n-1}+f_{a+2(n-1)}(u_{1}^{n}).
\end{align*}
Thus
\begin{align*}
    |u_j^1 \pm (a+{4|l_j|-3})/{2}+i(2n_j+1)| &\le |z_j\pm
(a+{4|l_j|-3})/{2}+i(2n_j+1)|+|f_{a}(z_{j+1})| \\ &< {1}/{8}+{1}/{16} <
{1}/{4},
 \end{align*}
  i.e., $u_j^1 \in \bar{E}_{l_j}^{n_j}(a)$ for every $1 \le j \le k-1,$ and
 \begin{align*}
   |u_k^1 \pm (a+{4|l_k|-3})/{2}+i(2n_k+1)| &\le |z_k\pm
(a+{4|l_k|-3})/{2}+i(2n_k+1)|+|f_{a}(u_{1})| \\ &< {1}/{8}+{1}/{16} <
{1}/{4},
 \end{align*}
  i.e., $u_k^1 \in \bar{E}_{l_k}^{n_k}(a).$
  \medskip
  \par
  \noindent
  {\it Case 1:} If $\max \{|l_1|,|l_2|, \hdots ,|l_k|\}=1$, then from 
the equations above we have
  \begin{align*}
  \Re u_j^1 &\le a+({4|l_j|-3})/{2}+{1}/{4} < a+1, \\
  \Re u_j^1 &\ge -(a+({4|l_j|-3})/{2}+{1}/{4})>-(a+1)
  \end{align*}
  for every $1 \le j \le k.$ Hence
  \[\theta_n\Big(E_{l_1}^{n_1}(a)\times \hdots \times E_{l_k}^{n_k}(a) \Big) \subset
V_0^{a+2}\times \hdots \times V_0^{a+2}.\]
\noindent
We make the following induction statement:
 \begin{equation}
 -\bigg(a+1 + \sum_{m=1}^{n-1} {2^{-(2m+4)}}\bigg)\le \Re u_j^n \le \bigg(a+1 +
\sum_{m=1}^{n-1} {2^{-(2m+4)}}\bigg)
 \end{equation}
for all $n \ge 1.$ The induction statement is true when $n=1.$ Assume the statement is true for
some $n \ge 1$, i.e., $u_j^n \in V_0^{a+2n}.$ Then
\begin{align}\label{2b}
 &|f_{a+2n}(u_j^n)|< {2^{-(2n+4)}}
\end{align}
 which gives
 \[ -{2^{-(n+4)}}< \Re f_{a+2n}(u_j^n)<{2^{-(2n+4)}}\]
 and so
 \[-\bigg(a+1 + \sum_{m=1}^n {2^{-(2m+4)}}\bigg)\le \Re u_j^{n+1} \le \bigg(a+1
+ \sum_{m=1}^n {2^{-(2m+4)}}\bigg)
\]
This completes the induction argument. Thus we get
 \[ -(a+2) \le \Re u_j^n \le a+2  \]
 for every $n \ge 0.$
 \par
 \medskip
 \noindent
Also, by arguing as in (\ref{2b}) we have that
 \[ |u_j^n| \le |u_j^0|+\sum_{m=1}^{n-1} {2^{-(m+4)}}\]
for any $n \ge 0.$ Hence the sequence $| F_n^{-1}(z)|$ is bounded which implies that $z \in F(\mbb
C^k).$
 \par
 \medskip
 \noindent
 {\it Case 2:} If $\max \{|l_1|,|l_2|, \hdots, |l_k|\}>1,$ 
 let $N=\max \{|l_j|:1 \le j \le k \}.$
 \medskip
 \par
 \noindent
 We make the following induction statement:
 \begin{align}\label{2c}
 |u_j^n \pm (a+(4|l_j|-3)/{2}+i(2n_j+1))|\le {1}/{8}+
\sum_{m=0}^{n-1}{2^{-(4+2m)}}
 \end{align}
\noindent
for every $n \ge 1.$
Observe that when $n=1$, since \[|f_{a}(u_j^0)| \le 2^{-4},\hspace{3mm} |f_{a}(u_1^1)| \le 2^{-4}\] for every $1 \le j \le k$ the induction statement (\ref{2c}) is true. So if $|l_j|>1$ then
$$u_j^1 \in \bar{E}_{\pm(|l_j|-1)}^{n_j}(a+2)$$ otherwise, as in Case 1
\[u_j^1 \in V_0^{a+2}.\]
Now assume the induction statement holds for some $n$. Then if $n < N$
 $$u_j^n \in \bar{E}_{\pm(|l_j|-n)}^{n_j}(a+2n)$$
 for some $1 \le j \le k$ otherwise, i.e., if $n \ge N$ then 
 \[u_j^n \in V_0^{a+2n}\] for every $1 \le j \le k.$ 
 In either case
 $|f_{a+2n}(u_j^n)|<{2^{-(n+4)}}$
 which implies that $$|u_j^{n+1} \pm (a+(4|l_j|-3)/{2}+i(2n_j+1))|\le {1}/{8}+
\sum_{m=0}^{n}{2^{-(4+2m)}}$$ whenever $1 \le j \le k-1.$ Now arguing in the
same way for $u_1^{n+1}$ we see that (\ref{2c}) holds for $u_k^{n+1}$ as well.
Thus $|F_n^{-1}(z)|$ is bounded and hence $z \in F(\mbb C^k).$

\end{proof}
\section{Polynomial Shift-like maps}
\noindent
In this section we prove Theorems \ref{thm1.3}, \ref{thm1.4} and \ref{thm1.5}.
\par
\medskip
\noindent
Consider finitely many polynomial shift-like maps of type 1 in $\mbb C^k$ of the form
\[ F_l(z_1,z_2,\hdots, z_k)=(z_2,z_3,\hdots,\alpha_l z_1+p_l(z_k))\]
where $1 \le l \le m$ and
$p_l$ is a polynomial with degree $d_l \ge 2$ and the constants $\alpha_l \neq 0.$
Let 
\begin{align}\label{3}
F=F_m\circ F_{m-1}\circ \hdots \circ F_1
\end{align} 
The degree of $F$ is $d=d_1d_2 \hdots d_m.$ 
\par
\medskip
\noindent
Recall the sets $K^{\pm}, K,V^{\pm},V$ for the map $F$ as defined in Section 1. From \cite{BP} we have the following basic result:
\begin{lem}
$K^{+} \subset V\cup V^{-}$ and $K^{-} \subset V \cup V^{+}.$ Moreover
for every $1 \le j \le m $, let $G_j=F_j \circ F_{j-1} \circ \hdots \circ F_1.$ Then $G_j(K^{+})\subset V \cup V^{-}$ and $G_j(K^-) \subset V \cup V^{+}.$
\end{lem}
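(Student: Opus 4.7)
My plan is to first prove the filtration claims for $F$ itself and then bootstrap to the intermediate compositions $G_j$ via a cyclic-conjugation trick, adapting the standard filtration argument of \cite{BP} for a single shift-like map to the composition $F = F_m \circ \ldots \circ F_1$.

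For the claim on $F$, I would establish three escape properties, choosing $R$ sufficiently large relative to the coefficients of the $p_l$'s and the $\alpha_l$'s. First, $F(V^+) \subset V^+$ with $\Vert F(z) \Vert \gtrsim \Vert z \Vert^d$: given $z \in V_k$ with $|z_k| > R$ dominant, applying $F_1$ yields a new last coordinate $\alpha_1 z_1 + p_1(z_k)$ of magnitude $\sim |z_k|^{d_1}$, which dominates the shifted coordinates $z_2, \ldots, z_k$ (each $\le |z_k|$); iterating through $F_2, \ldots, F_m$ compounds the degree to $d$. Hence $F^n(z) \to \infty$ for $z \in V^+$, so $V^+ \cap K^+ = \emptyset$. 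Second, the dual analysis for $F^{-1} = F_1^{-1} \circ \ldots \circ F_m^{-1}$, where each $F_l^{-1}(w) = ((w_k - p_l(w_{k-1}))/\alpha_l, w_1, \ldots, w_{k-1})$ shifts coordinates upward and introduces a new first coord dominated by $p_l(w_{k-1})/\alpha_l$, gives $F^{-1}(V^-) \subset V^-$ with growth, hence $V^- \cap K^- = \emptyset$. Third, for $z \in V_1$, tracking the dominant first coordinate through one or more applications of $F$ (resp.\ $F^{-1}$) eventually pushes it to the $k$-th position (resp.\ upward past $k-1$), placing the orbit in $V^+$ (resp.\ $V^-$), which then escapes by the first two steps. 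Altogether $V_1 \cap K^\pm = \emptyset$, giving $K^+ \subset V \cup V^-$ and $K^- \subset V \cup V^+$.

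For the moreover part I would use the identity $G_j \circ F \circ G_j^{-1} = F_j \circ \ldots \circ F_1 \circ F_m \circ \ldots \circ F_{j+1} =: \tilde F_j$, which follows by direct cancellation of the $F_l \circ F_l^{-1}$ pairs. The map $\tilde F_j$ is itself a composition of $m$ shift-like maps of type $1$ with the same total degree $d$, obtained by cyclically rotating the factors of $F$. Denoting its bounded-orbit sets by $\tilde K^\pm_j$, conjugation gives $\tilde K^\pm_j = G_j(K^\pm)$. Applying the first part to $\tilde F_j$ (with $R$ taken uniformly large for all $m$ cyclic permutations) yields the desired inclusions $G_j(K^+) \subset V \cup V^-$ and $G_j(K^-) \subset V \cup V^+$.

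The main obstacle is the $V_1$ analysis: unlike $V^\pm$, this region has no obvious single-step escape direction, and one must control the potential cancellation between $\alpha_1 z_1$ and $p_1(z_k)$ in the new last coordinate, possibly chaining a bounded number of iterations before the escape becomes visible. Choosing $R$ large enough relative to all the coefficients is what makes the filtration work uniformly.
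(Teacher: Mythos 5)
First, a point of reference: the paper does not prove this lemma at all --- it is quoted from Bedford--Pambuccian \cite{BP} --- so your proposal can only be judged on its own merits. The strategy (one-step invariance of escape regions, plus the cyclic-conjugation identity $G_j\circ F\circ G_j^{-1}=F_j\circ\cdots\circ F_1\circ F_m\circ\cdots\circ F_{j+1}$ together with $G_j(K^\pm)=K^\pm(\widetilde F_j)$ for the ``moreover'' part) is the right one, and the conjugation step is correct and clean. But two of your three steps have genuine problems. Your Step 2 is false as stated: $F^{-1}(V^-)\not\subset V^-$. Indeed $F_m^{-1}(w)=\bigl((w_k-p_m(w_{k-1}))/\alpha_m,\,w_1,\ldots,w_{k-1}\bigr)$, so for $w\in V_{k-1}$ the new first coordinate has modulus $\sim \Vert w\Vert^{d_m}$ and dominates everything; thus $F_m^{-1}(V_{k-1})\subset V_1$. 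The set that is actually backward-invariant is $V_1\cup V^-$, the dominant coordinate migrates upward one slot per step, and the norm is only guaranteed to be amplified once the dominant slot reaches position $k-1$ (or when the new first coordinate overtakes the max); one must then argue that this amplification recurs along the backward orbit. None of this is in your sketch, and the ``growth at every step'' you assert does not hold.

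The more serious issue is Step 3. You correctly flag the cancellation between $\alpha_1 z_1$ and $p_1(z_k)$ as the obstacle, but you do not resolve it, and your proposed resolution (``chaining a bounded number of iterations'') cannot work: after a cancellation the dominant coordinate of modulus $\Vert z\Vert$ is destroyed and replaced by a coordinate of modulus $\sim\Vert z\Vert^{1/d_1}$ sitting in an interior slot, and such cancellations can in principle cascade, with the norm decaying like $\Vert z\Vert^{2^{-j}}$ over roughly $\log\log\Vert z\Vert$ steps --- not a bounded number. In fact, for the forward orbit the exclusion of $V_1$ from $K^+$ is not achievable: a short ``sliding window'' argument shows that if every window $\bigl(\pi_1,\ldots,\pi_k\bigr)\circ G_j\circ F^n(z)$, $z\in K^+$, had its maximum at an interior coordinate, then $K^+$ would be bounded, contradicting the unboundedness of stable manifolds. (For $k=2$ the displayed definition would even give $V^-=\emptyset$.) The set playing the role of $V^-$ in \cite{BP} contains $V_1$; the slot that genuinely must be excluded from $K^-$ by the \emph{backward} dynamics is $V_1$ (via the migration-plus-amplification mechanism above), while the forward analysis only needs $F(V^+)\subset V^+$, which is your (correct) Step 1. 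So rather than fighting the cancellation in $V_1$ under forward iteration, you should consult the filtration argument in \cite{BP} and organize the proof around the invariance of $V^+$ under $F$ and of $V_1\cup V^-$ under $F^{-1}$.
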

\no
Using this we have the following refined estimates.
\begin{lem}\label{lem3.1}
For an arbitrary $\epsilon >0$ there exists $M>0$ such that 
\begin{enumerate}
\item[(i)]If $(z_1,z_2,\hdots,z_k) \in K^-$ then
\begin{align*}
(1- \epsilon)|z_{k-1}|^{d_{[m]}}-M &\le |z_k| \le (1+ \epsilon)|z_{k-1}|^{d_{[m]}}+M \\
(1- \epsilon)|z_{i-1}|^{d_{[m-k+i]}}-M &\le |z_i| \le (1+ \epsilon)|z_{i-1}|^{d_{[m-k+i]}}+M
\end{align*}
for every $2 \le i \le k-1.$
\item[(ii)]If $(z_1,_2,\hdots,z_k) \in G_j(K^-)$, when $1 \le j\le m-1$ then
\begin{align*}
(1- \epsilon)|z_{k-1}|^{d_{[j]}}-M &\le |z_k| \le (1+ \epsilon)|z_{k-1}|^{d_{[j]}}+M \\
(1- \epsilon)|z_{i-1}|^{d_{[j-k+i]}}-M &\le |z_i| \le (1+ \epsilon)|z_{i-1}|^{d_{[j-k+i]}}+M
\end{align*}
for every $2 \le i \le k-1.$
\end{enumerate}
\end{lem}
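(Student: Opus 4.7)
My approach is to interpolate the map $F$ through its partial compositions $G_1,\ldots,G_m=F$ so that every intermediate image of a point of $K^-$ lies in $V\cup V^+$ (by the preceding lemma), and then to read the required bounds off a single one-step shift-recursion. For a fixed $z\in K^-$ I will build a doubly-infinite ``fine orbit'' $\{\zeta^{(l)}\}_{l\in\mbb Z}\subset\mbb C^k$ by setting $\zeta^{(nm+s)}=G_s(F^n(z))$ for $n\in\mbb Z$ and $0\le s<m$; every term lies in $V\cup V^+$ because $K^-$ is invariant under both $F$ and $F^{-1}$ and $G_s(K^-)\subset V\cup V^+$ for every $s$. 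Writing $[l]\in\{1,\ldots,m\}$ for the residue of $l$ modulo $m$, the one-step identity $\zeta^{(l)}=F_{[l]}(\zeta^{(l-1)})$ unpacks to
\[
\zeta^{(l)}_i=\zeta^{(l-1)}_{i+1}\text{ for }1\le i\le k-1,\qquad \zeta^{(l)}_k=\alpha_{[l]}\,\zeta^{(l-1)}_1+p_{[l]}\bigl(\zeta^{(l-1)}_k\bigr).
\]

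Next I will unwind the shift. Iterating the first recursion $k-i$ times from $\zeta^{(0)}=z$ yields $z_i=\zeta^{(-(k-i))}_k$ and $z_{i-1}=\zeta^{(-(k-i)-1)}_k$ for $2\le i\le k-1$, and a single application of the second recursion then produces
\[
z_i=\alpha_{[m-k+i]}\,\zeta^{(-(k-i)-1)}_1+p_{[m-k+i]}(z_{i-1}),
\]
and, in the same fashion, $z_k=\alpha_{[m]}\,\zeta^{(-1)}_1+p_{[m]}(z_{k-1})$ with $z_{k-1}=\zeta^{(-1)}_k$. These are the only identities needed.

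The estimate itself is then routine. The inclusion $\zeta^{(-(k-i)-1)}\in V\cup V^+$ forces $\bigl|\zeta^{(-(k-i)-1)}_1\bigr|\le\max\{R,\,|\zeta^{(-(k-i)-1)}_k|\}=\max\{R,\,|z_{i-1}|\}$, so the perturbation $\alpha_{[m-k+i]}\zeta^{(-(k-i)-1)}_1$ grows at most linearly in $|z_{i-1}|$ and is therefore dominated by $|p_{[m-k+i]}(z_{i-1})|$, a polynomial of degree $d_{[m-k+i]}\ge 2$. Choosing $M$ large enough to absorb both the bounded regime $|z_{i-1}|\le R_0$ and the lower-order terms of $p_{[m-k+i]}$ then produces the claimed $(1\pm\epsilon)$-type bounds. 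Part~(ii) is handled by the identical scheme with the fine orbit re-initialised at $z'\in K^-$ (so $\zeta^{(0)}=z'$ and $\zeta^{(j)}=z$); the same unwinding gives $z_i=\alpha_{[j-k+i]}\zeta^{(j-k+i-1)}_1+p_{[j-k+i]}(z_{i-1})$, to which the same estimate applies.

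The only real obstacle I expect is the cyclic bookkeeping of the indices modulo $m$, especially when $k$ is large relative to $m$ and the shift-unwinding must make several laps through the periodic family $\{F_1,\ldots,F_m\}$ before reaching a $k$-th-coordinate slot; once the recursion is set up, the analytic content reduces to the triviality that a polynomial of degree at least two dominates a linear perturbation at infinity.
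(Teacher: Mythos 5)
Your argument is correct and is essentially the paper's own proof: the analytic content is the single one-step estimate $z_k=\alpha_j w_1+p_j(w_k)$ with $|w_1|\le\max\{R,|w_k|\}$ coming from $G_s(K^-)\subset V\cup V^+$, and the remaining coordinates are handled by unwinding the shift so that $z_i$ and $z_{i-1}$ appear as the $k$-th and $(k-1)$-th coordinates of an earlier point of the orbit. Your doubly-infinite ``fine orbit'' is just a tidier bookkeeping of the paper's choice of preimages $w\in G_{[j-k+i]}(K^-)$ with $F_{[j]}\circ\cdots\circ F_{[j-k+i+1]}(w)=z$, so the two proofs coincide in substance.
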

\begin{proof}
Let $(z_1,z_2, \hdots, z_k) \in G_j(K^-)$ for some $1 \le j \le m.$ Then there exists 
$(w_1,w_2,\hdots,w_k) \in G_{j-1}(K^-)$ such that
\begin{align*}
(z_1,z_2,\hdots,z_k)&=F_j(w_1,w_2,\hdots,w_k)\\
&=(w_1,w_2,\hdots,w_p,\alpha_j w_1+p_j(w_k))
\end{align*} 
Recall that the filtration $V, V^{\pm}$ is determined by a chosen $R>0.$
Now for sufficiently large $R>0$ there exists $M>0$ such that
\begin{align*}
 -M+(1-\epsilon)|w_k|^{d_{[j]}}\le |\alpha_j+p_j(w_k)| \le (1+\epsilon)|w_k|^{d_{[j]}}+M, ~\mbox{and}
\end{align*}
\begin{align}\label{3c}
 -M+(1-\epsilon)|z_{k-1}|^{d_{[j]}}\le| z_k| \le (1+\epsilon)|z_{k-1}|^{d_{[j]}}+M 
\end{align}
for every $1 \le j \le m.$ This gives (ii).
\par\medskip\noindent
Therefore for a given $i$ with $2 \le i \le k-1$, if $(z_1,z_2, \hdots, z_k) \in G_j(K^-)$ for some $1 \le j \le m$ there exists 
$(w_1,w_2,\hdots,w_k) \in G_{[j-k+i]}(K^-)$ such that
\begin{align*}
 \underbrace{F_{[j]}\circ F_{[j-1]}\circ \hdots \circ F_{[j-k+i+1]}}_{(k-i)}(w_1,w_2,\hdots,w_k)=(z_1,z_2,\hdots,z_k).
\end{align*}
By definition this means that
\[(\underbrace{w_{k-i+1},\hdots,w_k}_i,\tilde{w}_{i+1},\hdots,\tilde{w}_k)=(z_1,z_2,\hdots,z_k).\]
Hence $z_i=w_k $ and $z_{i-1}=w_{k-1}.$ Now  by (\ref{3c})
\begin{align}\label{3f}
M+(1+\epsilon)|z_{i-1}|^{d_{[j-k+i]}}\le| z_i| \le (1+\epsilon)|z_{i-1}|^{d_{[j-k+i]}}+M
\end{align} 
for every $1 \le j \le m.$
\par\medskip\noindent
Observe that $K^-$ is invariant under $F$, i.e., $K^-=G_m(K^-).$ Now by substituting $j=m$ in (\ref{3c}) and  (\ref{3f}), we obtain (i).
\end{proof}
\begin{proof}[Proof of Theorem \ref{thm1.3} ]
Pick any $z=(\ti{z}_1^0,\ti{z}^0_2,\hdots,\ti{z}^0_k) \in K^-$. Then 
\begin{align*}
(\ti{z}_1^0,\ti{z}_2^0,\hdots,\ti{z}_k^0)&\xrightarrow{F_1}(\ti{z}_2^0,\ti{z}_3^0,\hdots,\ti{z}_k^0,\ti{z}_k^1)\xrightarrow{F_2} \hdots \\
&\xrightarrow{F_m}(\ti{z}_k^{-k+m+1},\ti{z}_k^{-k+m+2},\hdots,\ti{z}^k_k)
\end{align*}
where we define $\ti{z}_k^{-j}=\ti{z}_{k-j}^0$ for $0\le j\le k-1.$
\par
\medskip
\noindent
By Lemma \ref{lem3.1} it follows that
\begin{align*}
|\ti{z}_k^i| \le (1+\epsilon)\max \Big\{|\ti{z}_k^{i-1}|^{d_{[i]}}, M^{d_{[i]}}\Big\}
\end{align*}
for $1 \le i \le m.$ Thus
\begin{align*}
|\ti{z}_k^m| &\le (1+\epsilon)\max\Big\{(1+\epsilon)\max\Big\{\hdots \max\Big\{|\ti{z}_k^0|,M^{d_1}\Big\}^{d_2},\hdots\Big\}^{d_m},M^{d_m}\Big\} \\
& \le (1+\epsilon)^{1+d_{[m]}+d_{[m]}d_{[m-1}+\hdots+d_{[m]}d_{[m-1]}\hdots d_{[2]}} \max\Big\{|\ti{z}_k^0|^d,M^d\Big\} \\
&\le(1+\epsilon)^{1+D_k^1+D_k^2+\hdots +D_k^{m-1}}\max\Big\{|\ti{z}_k^0|^d,M^d\Big\} \\ & \le C_k(\epsilon)\max\Big\{|\ti{z}_k^0|^d,M^d\Big\}.
\end{align*}
For any $j$, with $2 \le j\le k-1$ by using the facts 
\begin{align*}
|\ti{z}_j^m|&=|\ti{z}_k^{-k+m+j}| \\
& \le (1+\epsilon)^{1+d_{[-k+m+j]}+\hdots+d_{[-k+m+j]}\hdots d_{[2]}} \max\Big\{|\ti{z}_k^0|^{d_{[-k+m+j]}\hdots d_{[1]}},M^{d_{[-k+m+j]}\hdots d_{[1]}}\Big\}
\end{align*}
and 
\begin{align*}
|\ti{z}_{i+1}^0| \le (1+\epsilon)\max\Big \{|\ti{z}_i^0|^{d_{[m-k+i+1]}},M^{d_{[m-k+i+1]}}\Big\}
\end{align*}
for every $1 \le i \le k-1$, it can be concluded that
\begin{align*}
|\ti{z}_j^m| &\le (1+\epsilon)^{1+D_j^1+D_j^2+\hdots +D_j^{m-1}}\max\Big\{|\ti{z}_j^0|^d,M^d\Big\} \\
&\le C_j(\epsilon)\max\Big\{|\ti{z}_j^0|^d,M^d\Big\}.
\end{align*}
Now note that if $$\ti{z}_j^0=\pi_j \circ F^n(z)=z_j^n$$ then
\[|\ti{z}_j^m|=\pi_j \circ F(z_1,z_2,\hdots,z_k)=z_j^{n+1} \]
where $n \ge 0$ and $1 \le j \le k.$
Thus applying the above argument for any $n \ge 1$ we have
\begin{align*}
|z_{j}^{n+1}| &\le C_j(\epsilon)\max\Big\{ |z_j^n|^d,M^d\Big\} \\
& \le  C_j(\epsilon)^{1+d+\hdots+d^n}\max\Big\{ |z_j^0|^{d^{n+1}},M^{d^{n+1}}\Big\} \\
& \le C_j(\epsilon)^{d^{n+1}}\max\Big\{ |z_j^0|^{d^{n+1}},M^{d^{n+1}}\Big\}
\end{align*}
which is the first estimate.
\par \medskip \noindent
To prove the second inequality, i.e.,
\[
\max \Big\{ \vert z_j^n \vert, C_j(-\ep)^{d^n} M^{d^n} \Big\} \ge C_j(-\ep)^{d^n}
\vert z_j^0 \vert^{d^n}.
\]
again Lemma \ref{lem3.1} is applied to obtain
\[\max\Big\{|\ti{z}_j^0|,(1-\ep)M\Big\} \ge (1-\ep)|\ti{z}_{j-1}^0|^{d_{[m-k+j]}}\]
for every $2 \le j\le k.$
\par\medskip\noindent
Now repeating the steps as before the required inequality is achieved.
\end{proof}
\par \medskip \noindent
The next statement concerns the parametrization of an unstable manifold of an automorphism
with a $(k-1,1)$ saddle point and the conjugacy of the restriction of the automorphism 
to the unstable manifold to its linear part. It is a consequence of Sternberg's theorem (See \cite{SS} and \cite{Sb}). The proof given below is an adaptation of the proof in $\mbb C^2$ given in \cite{MNTU}.
\begin{lem}\label{3lem(a)}
Let $F \in {\rm Aut}(\mbb C^k)$ with a saddle point $a \in \mbb C^k$ of type $(k-1,1).$ Then there exists a holomorphic map $H: \mbb C \to \mbb C^k$ such that $H(\mbb C)=W^u(a)$ and $F\circ H(t)=H(\la t)$ where $W^u(a)$ is the unstable manifold at $a \in \mbb C^k$ and $\la$ is the eigenvalue of $DF(a)$ such that $|\la|>1.$
\end{lem}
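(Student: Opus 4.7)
The plan is to produce $H$ in two stages: first construct a local parametrization of the unstable manifold near $a$ by linearizing $F$ along the one-dimensional expanding direction, then extend globally by iteration using the functional equation $F\circ H(t)=H(\la t)$.

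First I would invoke the stable/unstable manifold theorem for the holomorphic automorphism $F$ at the hyperbolic fixed point $a$. Because exactly one eigenvalue $\la$ of $DF(a)$ lies outside the unit circle and the remaining $k-1$ eigenvalues lie strictly inside it, there exists a local unstable manifold $W^u_{\mathrm{loc}}(a)$, i.e., a one-dimensional complex submanifold of $\mbb C^k$ passing through $a$, invariant under $F^{-1}$, and tangent at $a$ to the eigenspace of $\la$. On this one-dimensional submanifold, the map $F$ restricts to a holomorphic germ fixing $a$ with multiplier $\la$, where $|\la|>1$.

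Next I would linearize $F$ along $W^u_{\mathrm{loc}}(a)$. Choose a local holomorphic coordinate $w$ on $W^u_{\mathrm{loc}}(a)$ centered at $a$; then $F|_{W^u_{\mathrm{loc}}(a)}$ has the form $w\mapsto \la w + O(w^2)$. Since $|\la|>1$ there are no resonances for a single multiplier, so the standard one-dimensional linearization theorem (Koenigs, equivalently a special case of Sternberg \cite{SS}) produces a holomorphic germ $\phi$ near $a$ in $W^u_{\mathrm{loc}}(a)$ with $\phi(a)=0$, $\phi'(a)\neq 0$, and $\phi\circ F=\la\cdot\phi$. Let $H_0=\phi^{-1}$ on a disc $D(0,r)\subset\mbb C$ small enough that $H_0$ is biholomorphic onto a neighborhood of $a$ in $W^u_{\mathrm{loc}}(a)$. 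Then $H_0(0)=a$ and $F\circ H_0(t)=H_0(\la t)$ for all $t\in D(0,r/|\la|)$.

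Now I would extend $H_0$ to all of $\mbb C$. Given $t\in\mbb C$, choose an integer $n=n(t)\ge 0$ so large that $t/\la^n\in D(0,r)$, and set
\[
H(t)=F^n\bigl(H_0(t/\la^n)\bigr).
\]
The functional equation $F\circ H_0=H_0\circ(\la\cdot)$ on $D(0,r/|\la|)$ makes this definition independent of the particular $n$ chosen, because replacing $n$ by $n+1$ inserts one extra $F$ outside and one extra factor of $1/\la$ inside, which cancel. Thus $H:\mbb C\to\mbb C^k$ is well-defined and holomorphic (on each disc $\{|t|<r|\la|^n\}$ it equals $F^n\circ H_0\circ (t/\la^n)$, a holomorphic expression). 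By construction $H(0)=a$ and $F\circ H(t)=H(\la t)$ for every $t\in\mbb C$.

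It remains to identify $H(\mbb C)$ with $W^u(a)$. The inclusion $H(\mbb C)\subset W^u(a)$ follows from $W^u(a)\supset W^u_{\mathrm{loc}}(a)$ and the $F$-invariance of $W^u(a)$. Conversely, any $p\in W^u(a)$ satisfies $F^{-n}(p)\to a$ as $n\to\infty$, so for $n$ large $F^{-n}(p)\in W^u_{\mathrm{loc}}(a)$ and hence $F^{-n}(p)=H_0(s)$ for some $s\in D(0,r)$; then $p=F^n(H_0(s))=H(\la^n s)$. This shows $W^u(a)\subset H(\mbb C)$, completing the identification. The main delicate point here, and the one I would write out with most care, is the consistency check that the iterative extension is single-valued and globally holomorphic, as everything else is routine once the local linearization is in hand.
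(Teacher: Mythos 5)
Your proposal is correct, but it reaches the local parametrization by a different route than the paper. You treat the local unstable manifold theorem and the one--dimensional Koenigs linearization as black boxes: first get $W^u_{\mathrm{loc}}(a)$ as a one--dimensional invariant submanifold, then linearize the restricted germ $w \mapsto \la w + O(w^2)$ and invert the linearizer to obtain $H_0$ with $F \circ H_0(t) = H_0(\la t)$. The paper instead constructs the local parametrization from scratch by a Picard--type iteration adapted from \cite{MNTU}: starting from the linear candidate $H_0(t) = (0,\dots,0,t)$ along the expanding eigendirection, it sets $H_n(t) = F \circ H_{n-1}(t/\la)$ and proves convergence via the estimate $\Vert H_n - H_{n-1}\Vert_s \le (s/\vert\la\vert^2)^{n-1} K \vert t \vert^2$ in an adapted norm with $\vert\la\vert < s < \vert\la\vert^2$; the existence of the local unstable manifold and its linearizing parametrization thus come out of the same limit, which is why the paper invokes Sternberg \cite{SS} rather than the manifold theorem plus Koenigs. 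Your global extension $H(t) = F^n(H_0(t/\la^n))$ and the consistency check are identical to the paper's. Two minor points: your approach buys brevity and modularity at the cost of citing two substantial classical theorems, whereas the paper's is self-contained; and your converse inclusion $W^u(a) \subset H(\mbb C)$ implicitly uses local uniqueness of the unstable manifold (a point of $W^u(a)$ whose backward orbit has entered a small neighborhood of $a$ must lie on $W^u_{\mathrm{loc}}(a)$) --- this is standard and the paper's corresponding step, via the local graph description of $W^u(a)$, is equally terse, so no real gap either way.
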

\begin{proof}
By a suitable conjugation we can assume the following properties on the map $F:$
\begin{enumerate}
\item[(i)] $a=0$ is the required saddle point.
\item[(ii)] $DF(0)$ is an upper triangular matrix with $\la_k=\la$ and $|\la_i| \le 1$ for every $1 \le i \le k-1.$
\end{enumerate}
Since $DF(0)$ is an upper triangular matrix at the origin, it has the form 
\begin{align}\label{3a}
F(z)=(\la_1 z_1+f_1(z_1,z_2,\hdots,z_k),\la_2 z_2+f_2(z_1,z_2,\hdots,z_k), \hdots, \la_k z_k +f_k(z_1,z_2,\hdots,z_k))
\end{align} 
where the $f_i$'s are polynomials in $\mbb C^k$ with the property that deg $z_k \ge 2$ in each $f_i.$ 
\par
\medskip
\noindent
Since $|\la|> 1$, choose $s$ such that \[|\la| < s < |\la|^2.\]
Let $A= s^{-1}DF(0)$. Then all the eigenvalues of $A$ are less than $1$ in modulus, and hence there exists a constant $\alpha_s$ such that $|\la s^{-1}| < \alpha_s < 1.$ 
\par 
\medskip
\noindent
Define a norm on $\mbb C^k$ by
\[\|z\|_s= \sum_{i=0}^{\infty}\alpha_s^{-i}|A^i (z)|.\]
Note that the operator norm of $A$ with respect to $\|.\|_s$ in $\mbb C^k$ is less than $1$. In fact \[\|s^{-1}Df(0)\|_s=\|A\|_s < \alpha_s.\]
So there exists a sufficiently small $r >0$ such that 
\[\|s^{-1}DF(z)\|_s < \alpha_s<1\]
for every $z \in B_r(0).$ This implies that 
\[\|F(z)-F(z')\|_s<s \|z-z'\|_s\]
for all $z, z' \in B_r(0).$ Let $\De_r$ denote the disc of radius $r$ in $\mbb C.$ Define a map $H_0: \De_r \to B_r(0)$ as
\[H_0(t)=(0,\hdots, 0,t).\]
Let $$H_1(t)=F\circ H_0(t/\la).$$ Since $|\la|>1$ the function $H_1$ is well defined on $\De_r.$ Now from (\ref{3a})
\begin{align*}
H_1(t)&=F\circ H_0(t/\la) \\
&=F(0,\hdots,0,t/\la) \\
&=(\tilde{f}_1(t),\tilde{f}_2(t),\hdots,\tilde{f}_{k-1}(t), t+\tilde{f}_k(t))
\end{align*}
where deg $\tilde{f}_i \ge 2$ for all $1 \le i \le k.$ 
Thus there exists a constant $K>0$ such that 
\begin{align}\label{3b}
\|H_0(t)-H_1(t)\|_s < K |t|^2
\end{align}
for $t \in \De_r.$ Now choose $\rho >0$ such that $\|z\|_s<r/2$ for every $z \in B_\rho(0)$ and 
\begin{align*}
\sum_{n=0}^{\infty}({s}/{|\la|^2})^n K \rho^2< r/2.
\end{align*}
Define $H_n: \De_{\rho} \to \mbb C^k$ inductively as 
\[H_n(t)=F\circ H_{n-1}(t/\la)\]
for every $n \ge 2.$
\par
\medskip
\noindent
{\it Claim:} 
\[\|H_n(t)-H_{n-1}(t)\|_s \le  ( {s}/{|\la|^2})^{n-1} K|t|^2\]
for every $n \ge 1.$
\par
\medskip
\noindent
From (\ref{3b}), the claim is true when $n=1.$ Assume that it holds upto some $n.$ Then
\begin{align*} 
\|H_n(t)\|_s &\le \|H_0(t)\|_s+ \sum_{j=1}^{n-1}\|H_j(t)-H_{j-1}(t)\|_s \\
&< r/2+r/2=r.
\end{align*}
Since $|H_n(t)| \le \|H_n(t)\|_s$, it follows that $H_n(\De_{\rho}) \subset B_r(0).$
Now
\begin{align*}
\|H_{n+1}(t)-H_n(t)\|_s&=\|F\circ H_n(t/\la)-f \circ H_{n-1}(t/\la)\|_s \\
&< s \|H_n(t/\la)-H_{n-1}(t/\la)\|_s \\
&\le  ( {s}/{|\la|^2})^{n-1} K|t|^2.
\end{align*}
This proves the claim. It also shows that the sequence $\{H_n\}$ converges uniformly on $\De_{\rho}$, i.e., $H_n \to H$ such that
\[F\circ H(t)= H(\la t)\]
where $t \in \De_\rho.$
\par
\medskip
\noindent
The function $H$ obtained locally at the origin can be extended to the entire complex plane in the following manner:
\[H(t)=F^{n}\circ H(t/\la^n).\]
Consider $G_n=\pi_k(H_n(t))$ for $n \ge 0.$ Since $G_n'(0)=1$ for all $n \ge 0$, the function $H$ obtained is injective at the origin, and hence on all of $\mbb C.$ Also $G(t)=\pi_k(H(t))$ is locally invertible at the origin. Now as the unstable manifold can be realized locally as a graph of a map and $H(\mbb C) \subset W^u(a)$, it follows that $H(\mbb C)=W^u(a).$
\end{proof}
\par
\medskip
\noindent
It follows that there exists a mapping $H:\mbb C \to \mbb C^k$ such that the map $F$ as in (\ref{3}) with a $(k-1,1)$ saddle point at $a \in \mbb C^k$, the unstable manifold $W^u(a)=H(\mbb C)$ and $H(\la t)=F\circ H(t).$
\begin{proof}[Proof of Theorem \ref{thm1.4}]
Let $\rho=\log d/\log  |\la|$, i.e., $|\la|^{\rho}=d.$ We then compute the type, using this value of $\rho$ and show that the coordinates $h_i$ of $H$ are of mean type. This will allow us to conclude that the $h_i$'s have order $\rho$. By Lemma \ref{3lem(a)} and Theorem \ref{thm1.3},
\begin{align*}
\sigma_i&=\limsup_{r \to \infty}\frac{\log \max_{|t|=r}|h_i(t)|}{r^{\rho}} \\
&\le \limsup_{n \to \infty}\frac{\log \max_{|t|=|\la^{n+1}t_0|}|h_i(t)|}{|\la^n t_0|^{\rho}} \\
&=\limsup_{n \to \infty}\frac{\log \max_{|t|=|\la t_0|}|\pi_i \circ F^n\circ H(t)|}{d^n|t_0|^{\rho}}\\
&\le \limsup_{n \to \infty}\frac{\log \max_{|t|=|\la t_0|}C_i(\ep)^{d^n}\max\{|h_i(t)|^{d^n},M^{d^n}\}}{d^n|t_0|^{\rho}} \\
&=\frac{\log \max_{|t|=|\la t_0|}C_i(\ep)\max\{|h_i(t)|,M\}}{|t_0|^{\rho}} < \infty
\end{align*}
for every $2 \le i\le k.$ 
\par\medskip\no
{\it Claim:} $\sigma_i >0$ for every $2 \le i \le k.$
\par\medskip\noindent
Note that none of the $h_i$'s , $1 \le i \le k$ can be constant. Indeed, if there exists $i_0$  such that $h_{i_0}$ is constant, by Lemma \ref{lem3.1}
we see that each of the $h_i$'s are bounded entire maps on $\mbb C$, and hence they are constant. But this contradicts the fact that $h_k(t)$ is locally injective at the origin.
\par\medskip\noindent
Choose $w_i \in \mbb C$ and $t_0 \in \mbb C$ such that $w_i=h_i(t_0)$ and $|w_i|$ is sufficiently large.
Then from Theorem \ref{thm1.3} it follows that 
\[|w_n^i|\ge C_i(\ep)^{d^n}|w_i|^{d^n}\]
where $w_n^i=\pi_i \circ F^n(w_1,w_2,\hdots,w_k).$ Now
\begin{align*}
\sigma_i&=\limsup_{r \to \infty}\frac{\log \max_{|t|=r}|h_i(t)|}{r^{\rho}} \\
& \ge \limsup_{n \to \infty}\frac{\log |h_i(\la^n t_0)|}{|\la^n t_0|^{\rho}} \\
&=\limsup_{n \to \infty}\frac{\log |\pi_i \circ F^n \circ H(t_0)|}{|\la^n t_0|^{\rho}}\\
&\ge \frac{\log C_i(\ep)+\log|w_i|}{|t_0|^{\rho}} >0.
\end{align*}
\end{proof}
\begin{lem}
Let \[\sigma_1=\limsup_{r \to \infty}\frac{\log \max_{|t|=r}|h_1(t)|}{r^{\rho}} .\]
Then $0 < \sigma_1 < \infty$ and
\begin{align}\label{3e}
\limsup_{r \to \infty}\frac{\log \max_{|t|=r}|\pi_i \circ G_j \circ H(t)|}{r^{\rho}} =d_{[m-k+2]}...d_{[j+i-k]}\sigma_1 
\end{align}
where $1 \le i\le k$ and $0 \le j\le m.$
\end{lem}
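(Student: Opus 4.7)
The lemma contains two assertions: the finiteness and positivity of $\sigma_1$, and the explicit formula (\ref{3e}). The plan is to derive both from the filtration estimates in Lemma \ref{lem3.1} together with the shift structure of the $F_l$'s.

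For $0 < \sigma_1 < \infty$: since $H(\mbb{C}) = W^u(a) \subset K^-$, Lemma \ref{lem3.1}(i) applies pointwise to $H(t)$, giving
\[
(1-\ep)|h_{i-1}(t)|^{d_{[m-k+i]}} - M \;\le\; |h_i(t)| \;\le\; (1+\ep)|h_{i-1}(t)|^{d_{[m-k+i]}} + M
\]
for $2 \le i \le k$. Taking logarithms and $\limsup$ on the circles $|t| = r$ yields $\sigma_i = d_{[m-k+i]}\sigma_{i-1}$ for each such $i$. Theorem \ref{thm1.4} guarantees $0 < \sigma_k < \infty$, so stepping backward through these equalities forces $0 < \sigma_1 < \infty$. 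This same chain also proves (\ref{3e}) in the base case $j = 0$, since the claimed product collapses to $d_{[m-k+2]}\cdots d_{[m-k+i]}$.

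For $j \ge 1$, let $\Sigma_i^j$ denote the left-hand side of (\ref{3e}). Because $G_j\circ H(\mbb{C}) \subset G_j(K^-)$, Lemma \ref{lem3.1}(ii) (or Lemma \ref{lem3.1}(i) when $j = m$) gives, by the same squeezing argument,
\[
\Sigma_i^j = d_{[j-k+i]}\,\Sigma_{i-1}^j, \qquad 2 \le i \le k,
\]
and hence $\Sigma_i^j = d_{[j-k+2]}\cdots d_{[j-k+i]}\,\Sigma_1^j$.

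The key remaining reduction exploits the shift structure of $F_j$: since $\pi_1\circ F_j(z_1,\ldots,z_k) = z_2$, we have $\pi_1\circ G_j = \pi_2\circ G_{j-1}$ for every $j \ge 1$, so
\[
\Sigma_1^j \;=\; \Sigma_2^{j-1} \;=\; d_{[j-k+1]}\,\Sigma_1^{j-1}.
\]
Iterating this one-step recursion and using $d_{[2-k]} = d_{[m-k+2]}$ yields $\Sigma_1^j = d_{[m-k+2]}\cdots d_{[j-k+1]}\,\sigma_1$, and combining with the earlier reduction produces
\[
\Sigma_i^j \;=\; d_{[m-k+2]}\cdots d_{[j+i-k]}\,\sigma_1,
\]
the displayed product being $j+i-1$ consecutive factors taken cyclically modulo $m$. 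The main obstacle is purely bookkeeping: tracking the cyclic indices and confirming that the successive applications of Lemma \ref{lem3.1} cover each relevant range of $j$. The substantive analytic input is already supplied by that lemma and by Theorem \ref{thm1.4}.
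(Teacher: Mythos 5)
Your proof is correct, and its skeleton is the one the paper uses: introduce the quantities $\alpha_i^j=\limsup_{r\to\infty}\log\max_{|t|=r}|\pi_i\circ G_j\circ H(t)|/r^{\rho}$, relate consecutive coordinates through Lemma \ref{lem3.1}, and shift the index via $\pi_i\circ G_j=\pi_{i+1}\circ G_{j-1}$. The one genuine difference is how the multiplicative \emph{equalities} are obtained. You read Lemma \ref{lem3.1} as a two-sided estimate and conclude $\Sigma_i^j=d_{[j-k+i]}\Sigma_{i-1}^j$ directly at each link (which is legitimate: the upper bound passes to the maximum on $|t|=r$, and the lower bound is applied at a point maximizing the $(i-1)$-st coordinate, using that each $\pi_{i-1}\circ G_j\circ H$ is non-constant so its maximum modulus tends to infinity — a point you should state explicitly). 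The paper instead records only the one-sided inequality $\alpha_i^j\le d_{[j-k+i]}\alpha_{i-1}^j$, chains it with $\alpha_i^j=\alpha_{i+1}^{j-1}$ to get $\alpha_1^0\ge\alpha_1^m/d$, and then closes the sandwich by computing $\alpha_1^m=d\,\alpha_1^0$ directly from the linearization $F\circ H(t)=H(\lambda t)$, which forces every inequality in the chain to be an equality. Your route dispenses with that functional-equation step but leans on the lower bound of Lemma \ref{lem3.1}; the paper's route needs only the upper bound plus the global scaling identity. Two small cautions: when you invoke Theorem \ref{thm1.4} for $0<\sigma_k<\infty$, be explicit that you are using only the part already established for $2\le i\le k$ (the $i=1$ case of that theorem is exactly what this lemma supplies, so citing the full theorem reads as circular); and you only actually need the anchor $0<\sigma_2<\infty$ together with $\sigma_2=d_{[m-k+2]}\sigma_1$ to conclude $0<\sigma_1<\infty$.
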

\begin{proof}
Let
\[\alpha_i^j=\limsup_{r \to \infty}\frac{\log \max_{|t|=r}|\pi_i \circ G_j \circ H(t)|}{r^{\rho}}\]
for every $1 \le i\le k$ and $0 \le j\le m.$ 
\par\medskip\noindent
Then observe that so far we have proved that
\[0< \alpha_i^0< \infty\]
for every $2 \le i \le k$ and $\alpha_1^0=\sigma_1.$ To show $0 < \sigma_1 < \infty$, note that
\[|\pi_i \circ G_j\circ H(t)|=|\pi_{i+1}\circ G_{j-1}\circ H(t)|\]
for $1 \le i \le k-1$ and $1 \le j\le m$, i.e.,
\begin{align}\label{3d}
\alpha_i^j=\alpha_{i+1}^{j-1}.
\end{align} 
By Lemma \ref{lem3.1} 
\[|\pi_i \circ G_j\circ H(t)|\le (1+\ep)|\pi_{i-1} \circ G_j\circ H(t)|^{d_{[j-k+i]}}+M\]
i.e.,
\[\alpha_i^j \le d_{[j-k+i]} \alpha_{i-1}^j\]
for $2 \le i\le k$ and $0 \le j \le m.$ It follows from (\ref{3d}) that
\begin{align*}
\alpha_1^0 \ge &\frac{\alpha_2^0}{d_{[m-k+2]}}=\frac{\alpha_1^1}{d_{[m-k+2]}} \\
\ge &\frac{\alpha_2^1}{d_{[m-k+2]}d_{[1-k+2]}}=\frac{\alpha_1^2}{d_{[m-k+2]}d_{[1-k+2]}}\\
&\vdots \\
\ge &\frac{\alpha_2^{m-1}}{d_{[m-k+2]}d_{[1-k+2]}\hdots d_{[m-1-k+2]}}=\frac{\alpha_1^m}{d}.
\end{align*}
Also,
\begin{align*}
\alpha_1^m &=\limsup_{r \to \infty}\frac{\log \max_{|t|=r}|\pi_1 \circ F \circ H(t)|}{r^{\rho}} \\
&=\limsup_{r \to \infty}\frac{\log \max_{|t|=r}| H(\la t)|}{|t|^{\rho}} \\
&=\limsup_{r \to \infty}\frac{\log \max_{|t|=r}| H(t)|}{|t|^{\rho}} |\la|^{\rho} \\
&=d\alpha_1^0.
\end{align*}
Thus it follows that
\[\alpha_1^0=\frac{\alpha_1^1}{d_{[m-k+2]}}=\frac{\alpha_1^2}{d_{[m-k+2]}d_{[1-k+2]}}=\hdots = \frac{\alpha_1^m}{d}.\]
Now, as $0 < \alpha_i^0 < \infty$  for $2 \le i \le k $ and $\alpha_2^0=\alpha_1^1$,
\[0 < \alpha_1^0 =\sigma_1 <\infty.\]
Also 
\[\alpha_1^j=\limsup_{r \to \infty}\frac{\log \max_{|t|=r}|\pi_1 \circ G_j \circ H(t)|}{r^{\rho}}=d_{[-k+2]}\hdots d_{[j+1-k]}\sigma_1\]
for $1 \le j \le m.$
\par 
\medskip
\noindent
Assume that (\ref{3e}) is true for some $i$, $1 \le i < k$ and for all $1 \le j \le m$. Then
\begin{align*}
\alpha_{i+1}^j=\alpha_{i}^{j+1}=d_{[-k+2]}\hdots d_{[j+(i+1)-k]}\sigma_1
\end{align*}
for every $1 \le j \le m-1.$ When $j=m,$
\begin{align*}
\alpha_{i+1}^m&=\limsup_{r \to \infty}\frac{\log \max_{|t|=r}|\pi_{i+1} \circ G_m \circ H(t)|}{r^{\rho}} \\
&=d \alpha_{i+1}^0=d \alpha_i^1=d d_{[-k+2]}\hdots d_{[(i+1)-k]} \sigma_1 \\
&=d_{[-k+2]}\hdots d_{[m-1-k+2]}d_{[m-k+2]}\hdots d_{[m+(i+1)-k]} \sigma_1.
\end{align*}
This proves (\ref{3e}), and hence completes the proof of Theorem {\ref{thm1.4}}.
\end{proof}
\medskip
\noindent
We will now prove Theorem {\ref{thm1.5}} following \cite{J}.
Note that $\ti{K}=H^{-1}(K)=H^{-1}(K^+).$ From \cite{BP}, the Green's function
\[G^+(z)=\lim_{n \to \infty} \frac{1}{d^n}\log^+|F^n(z)|\]
is plurisubharmonic and continuous on $\mbb C^k$ and is positive and pluriharmonic on $\mbb C^k \setminus K^+.$
Let $u^+(t)=G^+ \circ H(t).$ Then 
\[\ti{K}=\{t\in \mbb C: u^+(t)=0 \}\]
and
\[u^+(\la t )=d u^+(t)\] where $d=d_1d_2\hdots d_m.$
The order of $u^+$ is
\[ \rho={\rm ord} u^+=\limsup_{r \to \infty}\frac{\log \max_{|t|=r}u^+(t)}{\log r}.\]
\begin{prop}
$\rho={\rm ord}u^+=\log d/ \log |\la|.$
\end{prop}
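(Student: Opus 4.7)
The plan is to exploit the functional equation $u^+(\la t) = d\, u^+(t)$ together with the maximum principle for subharmonic functions. Iterating the functional equation yields
\[
u^+(\la^n t) = d^n u^+(t) \quad \text{for every } n \ge 0 \text{ and } t \in \mbb C,
\]
and hence, setting $M(r) = \max_{|t|=r} u^+(t)$, one has $M(|\la|^n r) = d^n M(r)$ for every $r > 0$ and every $n \ge 0$. Since $u^+ = G^+ \circ H$ is nonnegative and subharmonic on $\mbb C$, the quantity $M(r)$ is a non-decreasing function of $r$ by the maximum principle.

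The crucial first step will be to exhibit some $r_0 > 0$ with $M(r_0) > 0$, i.e.\ to rule out $u^+ \equiv 0$. If $u^+$ were identically zero, then $H(\mbb C) = W^u(a) \subset K^+$, and combining with $W^u(a) \subset K^-$ one would obtain $H(\mbb C) \subset K = K^+ \cap K^-$. The filtration in \cite{BP} (recalled just before Lemma \ref{lem3.1}) forces $K \subset V$, which is bounded, so $H$ would be a bounded entire map, hence constant by Liouville's theorem. This contradicts the local injectivity of $H$ at the origin established in the proof of Lemma \ref{3lem(a)} via the component $G(t) = \pi_k \circ H(t)$ with $G'(0) = 1$. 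Thus $u^+ \not\equiv 0$, and I can pick $t_0 \in \mbb C$ with $u^+(t_0) > 0$; taking $r_0 = |t_0|$, the monotonicity of $M$ gives $M(r_0) > 0$.

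With $M(r_0) > 0$ in hand, the rest is a routine sandwiching argument. For any sufficiently large $r$, choose the unique integer $n \ge 0$ with $|\la|^n r_0 \le r < |\la|^{n+1} r_0$. Monotonicity of $M$ yields
\[
d^n M(r_0) = M(|\la|^n r_0) \;\le\; M(r) \;\le\; M(|\la|^{n+1} r_0) = d^{n+1} M(r_0).
\]
Taking logarithms and dividing by $\log r$, and using $n \log |\la| + \log r_0 \le \log r < (n+1) \log |\la| + \log r_0$, both the upper and the lower bound for $\log M(r)/\log r$ tend to $\log d / \log |\la|$ as $n \to \infty$. Hence
\[
\rho \;=\; \limsup_{r \to \infty} \frac{\log M(r)}{\log r} \;=\; \frac{\log d}{\log |\la|},
\]
completing the proof. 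The only non-routine step is the non-triviality of $u^+$; everything else is a direct consequence of the functional equation and subharmonicity.
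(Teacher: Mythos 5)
Your argument is correct and is essentially the paper's proof: both rest on the functional equation $u^+(\la t)=d\,u^+(t)$ and the monotonicity of $M(r)=\max_{|t|=r}u^+(t)$ coming from the maximum principle for the subharmonic function $u^+$, followed by sandwiching $r$ between consecutive powers of $|\la|$. The one point where you go beyond the paper is the explicit verification that $u^+\not\equiv 0$ (via $H(\mbb C)\subset K\subset V$ and Liouville), a non-degeneracy that the paper's computation of $\limsup_n \log\max_{|t|=1} d^{n+1}u^+(t)/(n\log|\la|)$ tacitly assumes; that is a worthwhile addition, not a departure.
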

\begin{proof}
Let $r \gg 1.$ Then there exists $n \ge 0$ such that 
\[|\la|^n < r <|\la|^{n+1}.\]
Since $u^+$ is harmonic on $\mbb C$, the maximum principle shows that
\begin{align*}
\frac{\log \max_{|t|=r}u^+(t)}{\log r} \le \frac{\log \max_{|t|=|\la|^{n+1}}u^+(t)}{\log |\la|^{n}}.
\end{align*}
Hence
\begin{align*}
\rho &\le \limsup_{n \to \infty}\frac{\log \max_{|t|=|\la|^{n+1}}u^+(t)}{\log |\la|^{n}} \\
&=\limsup_{n \to \infty}\frac{\log \max_{|t|=1}d^{n+1}u^+(t)}{n \log |\la|} 
=\frac{\log d}{\log |\la|}.
\end{align*}
Also
\begin{align*}
\rho &\ge \limsup_{n \to \infty}\frac{\log \max_{|t|=|\la|^{n}}u^+(t)}{\log |\la|^{n}} \\
&=\limsup_{n \to \infty}\frac{\log \max_{|t|=1}d^{n}u^+(t)}{n \log |\la|} 
=\frac{\log d}{\log |\la|}=\frac{\log d}{\log |\la|}.
\end{align*}
Hence $\rho=\log d/ \log |\la|.$
\end{proof}
\par
\noindent
We now show that $\mbb C \setminus \tilde{K}$ has finitely many components. For this recall the following
(Theorem 8.9  in \cite{Ha}) fact about subharmonic functions.
\medskip
\par
\no
{\bf Theorem.}
{\it 
Let $u_1(z),u_2(z) \hdots u_N(z)$ be non-constant, non-negative, subharmonic  functions in the plane satisfying
\[u_i u_j(z)\equiv 0\]
for $1 \le i <j \le N$ and $N \ge 2.$ Let
\[B_j(r)=\max_{|z|=r}u_j(z)\]
and 
\[B(r)=\max_{1 \le j \le N} B_j(r).\]
Then \[\liminf_{r \to \infty} \frac{B(r)}{r^{N/2}} > 0.\]
}
\par
\noindent
Since ${\rm ord}u^+=\rho$ there exists an unbounded sequence $\{r_n\}$ of positive real numbers such that
\[\rho=\lim_{n \to \infty} \frac{\log \max_{|t|=r_n} u^+(t)}{\log r_n} .\]
So for an arbitrary $\ep > 0$ 
\begin{align}\label{4a}
\max_{|t|=r_n} u^+(t)\le r_n^{\rho+\ep}
\end{align}
for sufficiently large $n.$ 
\par\medskip\noindent
Suppose $\mbb C\setminus \ti{K}$ has infinitely many components. Choose an integer $N > \max\{2, 2(\rho + \ep)\}$ and let $U_1,U_2,\hdots,U_N$ be a collection of some $N$ components in 
$\mbb C \setminus \ti{K}.$ For $1 \le i \le N$, let
\begin{align*}
u_i(z)=\left\{
           \begin{array}{ll}
             u^+(z),&\hbox{if $z \in U_i$;} \\
             0,&\hbox{otherwise.}
           \end{array}
         \right.
\end{align*}
From (\ref{4a})
\begin{align*}
\frac{\max_{1 \le i \le N} \{\max_{|t|=r_n} u_i(t)\}}{r_n^{\rho + \ep }}=\frac{B(r_n)}{r_n^{\rho+\ep}} \le 1 .
\end{align*}
Thus
\[\lim_{n \to \infty} \frac{B(r_n)}{r_n^{N/2}}=0,\]
which is a contradiction.
\par\medskip\no
This argument also shows that the number of components of $\mbb C \setminus \ti{K}$ can at most be $\max\{1,2\rho\}.$
\begin{thm}
The number of components of $\mbb C\setminus \ti{K}$ cannot exceed $\max\{1, 2\rho\}.$ Therefore every component of $\mbb C \setminus \ti{K}$ is periodic under multiplication by $\la$.
\end{thm}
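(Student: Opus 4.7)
The bound on the number of components is essentially already established in the paragraph preceding the statement: using the Hayman-type theorem on subharmonic functions quoted there, any choice of $N$ distinct components $U_1,\ldots,U_N$ of $\mbb C\setminus\ti K$ gives rise to non-constant non-negative subharmonic functions $u_i$ with disjoint supports modulo zero sets, and the inequality $\liminf_{r\to\infty}B(r)/r^{N/2}>0$ is incompatible with $\max_{|t|=r_n}u^+(t)\le r_n^{\rho+\ep}$ along the sequence $\{r_n\}$ realizing the order, whenever $N>2(\rho+\ep)$. So my plan is first to record this more carefully: pick $\ep>0$ arbitrary, deduce that $N\le 2(\rho+\ep)$ for every choice of $N$ disjoint components, and then let $\ep\downarrow 0$ to conclude that $\mbb C\setminus\ti K$ has at most $\max\{1,2\rho\}$ components. (The case of a single component needs to be allowed separately because the Hayman theorem requires $N\ge 2$.)

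For the periodicity statement, the key observation is that the map $M_\la:t\mapsto \la t$ is a homeomorphism of $\mbb C$ that preserves $\ti K$. Indeed, the relation $u^+(\la t)=d\,u^+(t)$ from the functional equation for the Green function shows that $u^+(\la t)=0$ iff $u^+(t)=0$, so $M_\la(\ti K)=\ti K$ and hence $M_\la(\mbb C\setminus\ti K)=\mbb C\setminus\ti K$. Since $M_\la$ is a homeomorphism, it sends components to components, and therefore induces a well-defined bijection on the (finite) set of connected components of $\mbb C\setminus\ti K$.

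With this in hand, the periodicity is immediate: any bijection of a finite set has finite order, so there exists $p\ge 1$ with $M_\la^p(U)=U$ for every component $U$; equivalently $\la^p U=U$, so each component is periodic. I would state this as the final line of the proof, noting also that the argument actually produces a common period $p$ dividing $(q')!$, where $q'\le \max\{1,2\rho\}$ is the number of components (this is consistent with the integers $p',q',N,p,q$ introduced just before Theorem~\ref{thm1.5}).

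The only substantive obstacle is bookkeeping in the first part: one must justify that the $u_i$ are genuinely non-constant subharmonic functions on $\mbb C$ (non-constancy follows because $u^+>0$ on each $U_i$ since $U_i$ is a non-empty open subset of $\mbb C\setminus\ti K$ and $u^+$ vanishes only on $\ti K$; subharmonicity of each $u_i$ on $\mbb C$ follows because $u^+$ is continuous and vanishes on $\partial U_i\subset\ti K$, so extending by zero preserves the sub-mean-value property). Once this is in place, the cited theorem plus the upper bound $\max_{|t|=r_n}u^+(t)\le r_n^{\rho+\ep}$ gives the contradiction, and the rest is essentially formal.
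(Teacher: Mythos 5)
Your proposal is correct and follows essentially the same route as the paper: the component bound is exactly the Hayman/Denjoy--Carleman--Ahlfors argument from the preceding paragraph (with the harmless extra care of letting $\ep\downarrow 0$ and checking subharmonicity of the truncations), and periodicity comes from the $\la$-invariance of $\mbb C\setminus\ti K$ via $u^+(\la t)=d\,u^+(t)$ together with finiteness of the set of components. Your phrasing via the induced \emph{bijection} on components is in fact slightly cleaner than the paper's pigeonhole on forward images $\la^n U_i\subset U_{i(n)}$, since it avoids having to argue separately that $U_i$ itself, and not merely some image $U_{i(n)}$, recurs.
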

\begin{proof}
Let the number of components in $\mbb C \setminus \ti{K}$ be $q'$. 
\par \medskip \no
Note that if  $t \in \mbb C\setminus \ti{K}$ then 
$\la^n t \in \mbb C\setminus \ti{K}$ for every $n \ge 1$, as
\begin{align}\label{4c}
u^+(\la^n t)=d^n u^+(t).
\end{align}
Therefore $$\la^n U_i \subset U_{i(n)}$$ for every $n \ge 1$ and $1 \le i \le q'.$ As the number of components in $\mbb C \setminus \ti{K}$ is finite, for every $1 \le i \le q'$ there exists $n_i \ge 1$ such that 
\[\la^{n_i} U_i \subset U_{i}.\] 
Hence each component is periodic.
\end{proof}
\no
Let us recall the notion of an {\it access} from \cite{J}. Let $U$ be an open set and $a$ a point in the boundary of $U.$ We say that $a$ is {\it accessible} from $U$ if there exists a curve $\gamma:[0,1] \to \overline{U}$ which satisfies $\gamma(0)=a$ and $\gamma((0,1]) \subset U.$ We call such a $\gamma$ an {\it access.}
\begin{cor}\label{4cor}
The origin is accessible from an arbitrary component of $\mbb C \setminus \ti{K}$. Moreover the access
$\Gamma$ can be periodic, i.e., if $q$ is the period of the component, there is a path $\Gamma$ satisfying $\Gamma([0,1]) \subset \la^q \Gamma([0,1]).$ 
\end{cor}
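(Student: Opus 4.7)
The plan is to build the access $\Gamma$ by iteratively contracting a single ``fundamental arc'' in $U$ under the map $z \mapsto \la^{-q} z$. Since the component $U$ has period $q$ the previous theorem gives $\la^q U = U$, hence $\la^{-q}$ is a self-map of $U$, and because $|\la|>1$ it is a strict contraction towards the origin. Moreover $H(0)=a\in K$ forces $0 \in \ti K$, so the origin is a candidate boundary point of $U$.

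First I would fix a base point $z_0 \in U$. Since $U$ is an open connected subset of $\mbb C$ it is path-connected, and $\la^{-q} z_0 \in U$ as well, so there exists a continuous arc $\gamma : [\tfrac12,1] \to U$ with $\gamma(\tfrac12)=\la^{-q}z_0$ and $\gamma(1)=z_0$. For each $n \ge 0$ and $t \in [2^{-n-1},2^{-n}]$ I would define
\[
\Gamma(t) \;=\; \la^{-nq}\,\gamma(2^n t),
\]
and set $\Gamma(0)=0$. The invariance $\la^{-q}U=U$ guarantees each scaled arc $\la^{-nq}\gamma$ lies in $U$, and the endpoints $\Gamma(2^{-n})=\la^{-nq}z_0$ match across neighboring intervals, giving continuity on $(0,1]$ with $\Gamma((0,1])\subset U$.

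Next I would verify continuity at $t=0$: on the $n$-th dyadic piece
\[
|\Gamma(t)| \;\le\; |\la|^{-nq}\max_{s \in [1/2,1]}|\gamma(s)|,
\]
and the right side tends to $0$ because $|\la|>1$. This produces the desired access and shows accessibility of the origin from $U$. For the periodic-access clause, the construction is self-similar: for $t \in [2^{-n-1},2^{-n}]$ one computes $\la^q \Gamma(t) = \la^{-(n-1)q}\gamma(2^n t) = \Gamma(2t)$, because the defining formula at level $n-1$ on the interval $[2^{-n},2^{-n+1}]$ reads $\la^{-(n-1)q}\gamma(2^{n-1}\cdot 2t)$. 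Hence $\la^q \Gamma([0,\tfrac12]) = \Gamma([0,1])$, and consequently
\[
\Gamma([0,1]) \;=\; \la^q \Gamma([0,\tfrac12]) \;\subset\; \la^q \Gamma([0,1]).
\]

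The potential sticking point here is conceptual rather than computational: one needs the component $U$ to be path-connected (immediate since it is open and connected in $\mbb C$), the dilation $\la^{-q}$ to preserve $U$ (recorded in the preceding theorem), and the estimate $|\la|^{-q}<1$ to guarantee both the uniform shrinkage of the arcs and convergence of endpoints to $0$. All three inputs are in place, so the remainder is the scaling-and-gluing construction described above.
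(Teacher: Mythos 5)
Your construction is exactly the paper's: join $z_0$ to $\la^{-q}z_0$ by an arc in the component, rescale dyadically by powers of $\la^{-q}$, and glue, using $|\la|>1$ for convergence to the origin and the self-similarity $\la^q\Gamma(t)=\Gamma(2t)$ for periodicity. The proposal is correct and follows the same route as the paper, with the continuity at $t=0$ and the periodicity identity spelled out a bit more explicitly.
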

\begin{proof}
Pick $t_0 \in \mbb C \setminus \ti{K}$, i.e., $t_0 \in U_{i_0}$ where $1 \le i_0 \le q'$ and let $q_0$ be the period of $U_{i_0}.$ Then $t_0/\la ^{q_0}$ lies in the same component. So there exists a curve $\gamma:[{1}/{2},1]$ such that $\gamma({1}/{2})=t_0/\la^{q_0}$ and $\gamma(1)=t_0.$
Now extend the path to $[0,1]$ by
\begin{align*}
\Gamma(\xi)= \left\{
           \begin{array}{ll}
             0,&\hbox{if $\xi=0$;} \\
             (\la^{q_0})^{-n}\gamma(2^n \xi),&\hbox{if $\frac{1}{2} \le 2^n \xi \le 1$ for $n \ge 0$.}
           \end{array}
         \right.
\end{align*}
Since $\{{t_0}/{(\la^{q_0})^n}\}$ is a sequence converging to the origin, $\gamma$ is a well defined access to 
the origin from $U_{i_0}.$ Also \[\Gamma(\xi)=\la^{q_0} \Gamma(\xi/2).\]
Thus $\Gamma$ is the required curve.
\end{proof}
\begin{lem}
The components of $\mbb C \setminus \ti{K}$ have a natural circular ordering.
\end{lem}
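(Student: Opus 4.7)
The strategy is to show that each component of $\mbb C \setminus \ti K$ is unbounded, then read off a circular ordering from the way these components ``reach infinity''. The circular structure of the Riemann sphere viewed from $\infty$ will be the source of the ordering.

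\emph{Step 1: every component is unbounded.} Write $U_1,\ldots,U_{q'}$ for the components (finitely many, by the Denjoy--Carleman--Ahlfors argument already established) and recall that $\la U_i \subset U_{\sigma(i)}$ for some permutation $\sigma$ of $\{1,\ldots,q'\}$. Because $\sigma$ has finite order, for each $i$ there is $q_i\ge 1$ with $\la^{q_i}U_i\subset U_i$. For any $t_0\in U_i$ the orbit $\{\la^{jq_i}t_0\}_{j\ge 0}$ lies in $U_i$ and has modulus tending to $\infty$, so $U_i$ is unbounded.

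\emph{Step 2: an access to infinity in each component.} Fix $t_0\in U_i$ and choose a path $\gamma:[1/2,1]\to U_i$ joining $t_0$ to $\la^{q_i}t_0$, as in the proof of Corollary~\ref{4cor}. Concatenate the dilates $\la^{jq_i}\gamma$ for $j\ge 0$, reparametrising on $[1-2^{-j},1-2^{-(j+1)}]$, to obtain a continuous injective path $\Gamma_i:[0,1)\to U_i$ with $\Gamma_i(0)=t_0$, $\Gamma_i(s)\in U_i$ for every $s$, and $|\Gamma_i(s)|\to\infty$ as $s\to 1^-$. I will call such a $\Gamma_i$ an \emph{access from infinity} for $U_i$.

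\emph{Step 3: circular ordering from the endpoints on a large circle.} Choose $R>0$ so large that each $\Gamma_i$ has a last entry time $s_i\in(0,1)$ into the disc $\{|z|\le R\}$, and set $p_i=\Gamma_i(s_i)$, so that $\Gamma_i([s_i,1))$ is a simple arc in $U_i\cap\{|z|\ge R\}$ starting at the circle $|z|=R$ and escaping to $\infty$. The arcs $\Gamma_i([s_i,1))$ are pairwise disjoint because they lie in distinct components of $\mbb C \setminus \ti K$. Adding the point at infinity in $\mbb P^1$, each of these arcs becomes a simple arc in the closed disc $\{|z|\ge R\}\cup\{\infty\}$ running from the boundary circle to $\infty$; such a collection of pairwise disjoint arcs in a closed topological disc induces a circular ordering on their endpoints $p_1,\ldots,p_{q'}$ on the circle $|z|=R$, and I take this as the circular ordering of the components.

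\emph{Step 4: independence of the choices.} The main point still to address is that the resulting circular ordering does not depend on the choices of base points $t_0\in U_i$, of connecting paths $\gamma$, or of the radius $R$. Two accesses $\Gamma_i$ and $\Gamma_i'$ for the same component $U_i$ can be joined by a path inside $U_i$; pushing this homotopy out to large radii using the dilation action of $\la^{q_i}$ shows that the endpoints $p_i$ and $p_i'$ lie in the same component of the intersection $U_i\cap\{|z|=R\}$, and this is enough to give them the same position in the cyclic order of endpoints. Increasing $R$ only refines the picture, since the unbounded arcs $\Gamma_i([s_i,1))$ cannot wrap around each other without crossing. The main obstacle is the verification that, in taking $R$ large and considering the arcs in $\{|z|\ge R\}\cup\{\infty\}$, each $U_i$ contributes exactly one well-defined endpoint class on the circle $|z|=R$; here the finiteness of the number of components coming from the Denjoy--Carleman--Ahlfors estimate, together with the self-similarity $\la U_i=U_{\sigma(i)}$, is precisely what rules out pathological accumulation at infinity and makes the cyclic ordering canonical.
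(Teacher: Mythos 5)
Your argument is correct in outline, but it takes the mirror--image route to the paper's. The paper orders the components by where their accesses to the \emph{origin} --- already constructed in Corollary \ref{4cor} from the contracting dilates $t \mapsto \la^{-q_0}t$ --- first meet a small circle $\{|z|=r\}$ with $r \le \min_j |\Gamma_j(1)|$; you order them by where accesses to \emph{infinity}, built from the expanding dilates $t \mapsto \la^{q_i}t$, last leave a large disc. Both constructions yield a cyclic order preserved by multiplication by $\la$ (which fixes $0$ and $\infty$ and is an orientation--preserving homeomorphism of the sphere), and that is all the subsequent rotation--number argument and the proof of Theorem \ref{thm1.5} require, so your version would serve equally well in the sequel. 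What the paper's choice buys is economy: the accesses to the origin already exist and are reused later when $\Gamma_1$ and $\la\Gamma_1$ are lifted through the logarithm, whereas your Steps 1--2 must re--derive an ``access to infinity'' analogue from scratch (and, a minor point, the concatenated path need not be injective as you claim --- the dilates of $\gamma$ may overlap --- though nothing essential depends on this). On the one genuinely delicate point, namely that the cyclic order is independent of the choice of access and of the radius, so that a single component cannot contribute ``interleaved'' endpoint positions, your Step 4 is a sketch rather than a proof; but the paper is no more rigorous here, disposing of the issue with the one--line remark that the ordering is independent of the choice of the $\Gamma_j$'s. So the gap you honestly flag as the main obstacle is real, but it is shared by the source, and your overall strategy is sound.
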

\begin{proof}
Let $U_1,U_2,\hdots, U_{q'}$ be the components of $\mbb C\setminus \ti{K}$ and $\Gamma_1,\Gamma_2, \hdots ,\Gamma_{q'}$ the corresponding accesses. Choose $0<r \le \min\{|\Gamma_1(1)|,|\Gamma_2(1)|, \hdots ,|\Gamma_{q'}(1)|\}.$ Let
\[\xi_j=\min\{0 < \xi \le 1:|\Gamma_j(\xi)|=r\}\]
for each $1 \le j \le q'.$ Then the $\Gamma(\xi_j)$'s are points on the circle $\{z: |z|=r\}$ and hence have a natural circular ordering. Note that the circular ordering is independent of the choice of $\Gamma_j$'s.
\end{proof}
\no
Let $p':\{1,2,\hdots,q'\} \to \mbb N$ be a function such that 
\[\la U_i \subset U_{(i+p'(i)){\rm mod}~ q'}.\]
{\it Claim:} The function $p'$ is  constant.
\par \medskip \no
Note that $\la U_i\cap \la U_j=\emptyset$ for $i \neq j.$ Since there is a circular ordering on the components of $\mbb C \setminus \ti{K}$, we see that
$$1+p'(1) < 2+p'(2) < \hdots < q'+p'(q'), $$
i.e., $p'$ is an increasing function. 
\par \medskip \no 
If $p'$ is non-constant then there exists a $j_0 > 1$ such that
\[j_0=\min\{i: p'(i)>p'(1), ~2 \le i\le q'\}.\]
Then \[1+p'(1)\le (j-1)+p'(j-1) < (j_0-1)+p'(j_0)\]
for every $2 \le j < j_0.$ Since $\la(\mbb C \setminus \ti{K})=\mbb C \setminus \ti{K}$, there exists $i_0> j_0$ such that 
\[i_0+p'(i_0)=(j_0-1)+p'(j_0)\]
i.e., $p'(i_0)<p'(j_0)$, which is a contradiction! 

\par\medskip\no
This shows that the rotation number $p'$ is well defined and hence the period of each component of 
$\mbb C\setminus \ti{K}$ is the same. Let $N,~p,~q$ be as mentioned in Section 1. Then it can be easily seen that $q$ is the period of each component and there are $N$ cycles.
\begin{prop}
The following three conditions are equivalent:
\begin{enumerate}
\item[(i)] $\ti{K}$ is bridged.
\item[(ii)] The component of $\ti{K}$ containing $0$ is unbounded.
\item[(iii)] $\ti{K}$ has an unbounded component.

\end{enumerate}
\end{prop}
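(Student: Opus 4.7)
The key structural tool is the $\la$-invariance of $\ti K = \{u^+ = 0\}$, which follows from the functional equation $u^+(\la t) = d\,u^+(t)$; consequently $t \mapsto \la t$ is a homeomorphism of $\ti K$ onto itself that fixes $0$ and permutes its connected components. Writing $C_0$ for the component of $\ti K$ containing $0$, we get $\la C_0 = C_0$. This produces the basic dichotomy: if $t \in C_0 \setminus \{0\}$, then $\la^n t \in C_0$ for every $n \ge 0$ and $|\la^n t| \to \infty$, so $C_0$ is unbounded. Hence either $C_0 = \{0\}$ or $C_0$ is unbounded.

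Both implications out of (ii) are then immediate. For (ii) $\Rightarrow$ (iii), $C_0$ itself is an unbounded component of $\ti K$. For (ii) $\Rightarrow$ (i), a connected unbounded subset of $\mbb C$ containing $0$ meets every sufficiently small circle $\{|t|=r\}$, so $0$ is an accumulation point of $C_0 \setminus \{0\} \subset \ti K$.

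For (iii) $\Rightarrow$ (ii), fix an unbounded component $D$ of $\ti K$ and a point $t_0 \in D$. Then each $\la^{-n} D$ is an unbounded component of $\ti K$ containing $\la^{-n} t_0 \to 0$. Working in the Riemann sphere $S^2 = \mbb C \cup \{\infty\}$, the continua $\la^{-n} D \cup \{\infty\}$ all lie in the compact set $\ti K \cup \{\infty\}$, so by Blaschke's selection theorem a subsequence converges in the Hausdorff metric to a compact connected set $Q \subseteq \ti K \cup \{\infty\}$ containing both $0$ and $\infty$. A Zorn's lemma argument on the subcontinua of $Q$ containing $\{0,\infty\}$ yields an irreducible subcontinuum $Q'$ between $0$ and $\infty$. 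I then claim that $Q' \setminus \{\infty\}$ is connected: any disconnection into nonempty open subsets $A$ and $B$ with $0 \in A$ would, upon taking closures in $Q'$ and using that $\infty$ cannot be an isolated point of the continuum $Q'$, yield a proper subcontinuum of $Q'$ still containing both $0$ and $\infty$, contradicting irreducibility. Therefore $Q' \setminus \{\infty\} \subset \ti K$ is a connected unbounded set through $0$, and so $C_0$ is unbounded.

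For (i) $\Rightarrow$ (ii), by the dichotomy I need only rule out $C_0 = \{0\}$. Choose a sequence $t_n \to 0$ in $\ti K \setminus \{0\}$ and let $D_n$ denote the component of $\ti K$ containing $t_n$. If some $D_n$ is unbounded, then (iii) applies and the previous step yields $C_0$ unbounded, contradicting $C_0 = \{0\}$. The main obstacle is the remaining subcase in which every $D_n$ is bounded: the $\la$-orbits $\{\la^k D_n\}_{k \in \mbb Z}$ are then infinite families of distinct bounded components of $\ti K$ (periodicity being ruled out as in the dichotomy), and $\la^{-k} D_n$ shrinks to $\{0\}$ as $k \to \infty$. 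Excluding this configuration is where I expect the real work to lie, and I would approach it by combining the finiteness of the components of $\mbb C \setminus \ti K$ proved earlier in the section with the specific properties of $u^+$ (subharmonicity, $\la$-homogeneity, and harmonicity on each complementary component) to argue that such an infinite nested family of bounded components of $\ti K$ accumulating at $0$ would force the number of components of $\mbb C \setminus \ti K$ to exceed any fixed bound.
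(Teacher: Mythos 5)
Your dichotomy ($C_0=\{0\}$ or $C_0$ unbounded, via $\la C_0=C_0$) and the implications out of (ii) agree with the paper. For (iii) $\Rightarrow$ (ii) you take a genuinely different and much heavier route: the paper argues by contraposition, enclosing a bounded $C_0$ by a Jordan curve $\Gamma\subset\mbb C\setminus\ti K$ around the origin and noting that the curves $\la^j\Gamma$, which stay in $\mbb C\setminus\ti K$ and encircle $0$ at radii tending to infinity, would have to meet any unbounded component of $\ti K$. Your Hausdorff-limit argument on the sphere (Blaschke selection, irreducible subcontinuum, boundary bumping) is correct but replaces a paragraph of elementary plane topology with continuum-theoretic machinery; both work.

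The genuine gap is (i) $\Rightarrow$ (ii), which you correctly isolate as the delicate implication but do not close. Bridgedness only says $0$ is not isolated in $\ti K$; it does not by itself prevent $C_0=\{0\}$ with a sequence of bounded components of $\ti K$ accumulating at $0$ (the model $\{0\}\cup\{1/n:n\ge 1\}$ is bridged at $0$ with trivial component of $0$). Your proposed repair --- that such a configuration would force $\mbb C\setminus\ti K$ to have too many components --- fails as stated: infinitely many bounded components of $\ti K$ shrinking to $0$ need not disconnect the complement at all (isolated points do not), so no contradiction with the finiteness of $q'$ arises from counting alone. Whatever closes this implication must use the potential theory of $u^+$ (it is positive and harmonic off $\ti K$, vanishes continuously on $\ti K$, and satisfies $u^+(\la t)=d\,u^+(t)$), for instance to show that a compact component of $\ti K$ cannot be removable for $u^+$, rather than pure topology. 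You should be aware that the paper itself dispatches this step in a single sentence --- it simply asserts that bridgedness yields a point $t\neq 0$ in the component of the origin --- so you have put your finger on exactly the point the paper glosses over; but your write-up, as it stands, leaves the proposition unproved in the direction (i) $\Rightarrow$ (ii).
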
 
\begin{proof}
If $\ti{K}$ is bridged and $A$ is the component of $\ti{K}$ containing origin, there exists $t \in A$ such that $t \neq 0.$ Since $0 \in A$, $\la^n A=A$ for every $n \ge 0.$ Now from (\ref{4c}) it follows that $\la^n t \in A$ for every $n \ge 0$, and hence $A$ is unbounded. Thus $(i) \Rightarrow (ii) \Rightarrow (iii).$

\par\medskip\no
For the converse suppose that $A$ is bounded and hence $\bar{A}$ is compact. Then there exists a  simple closed curve around the origin $\Gamma$ in $\mbb C \setminus \ti{K}$ such that $A \subset D_{\Gamma}$, where $D_{\Gamma}$ is the domain enclosed by the curve $\Gamma.$ Since $\Gamma$ is closed curve in $\mbb C \setminus \ti{K}$, $\la^j \Gamma$ is a curve in $\mbb C \setminus \ti{K}$, for every $j \ge 1.$ Now, if we assume $(iii)$,then there exists an unbounded component in $\ti{K}$, say $A_{\infty}.$ But $\la^j \Gamma \cap A_{\infty} \neq \emptyset $ for sufficiently large $j \ge 0$, which is a contradiction! Hence $(iii) \Rightarrow (ii) \Rightarrow (i).$ 
\end{proof}
\begin{figure}[H]\label{fig5}
\includegraphics[height=3in,width=6.5in]{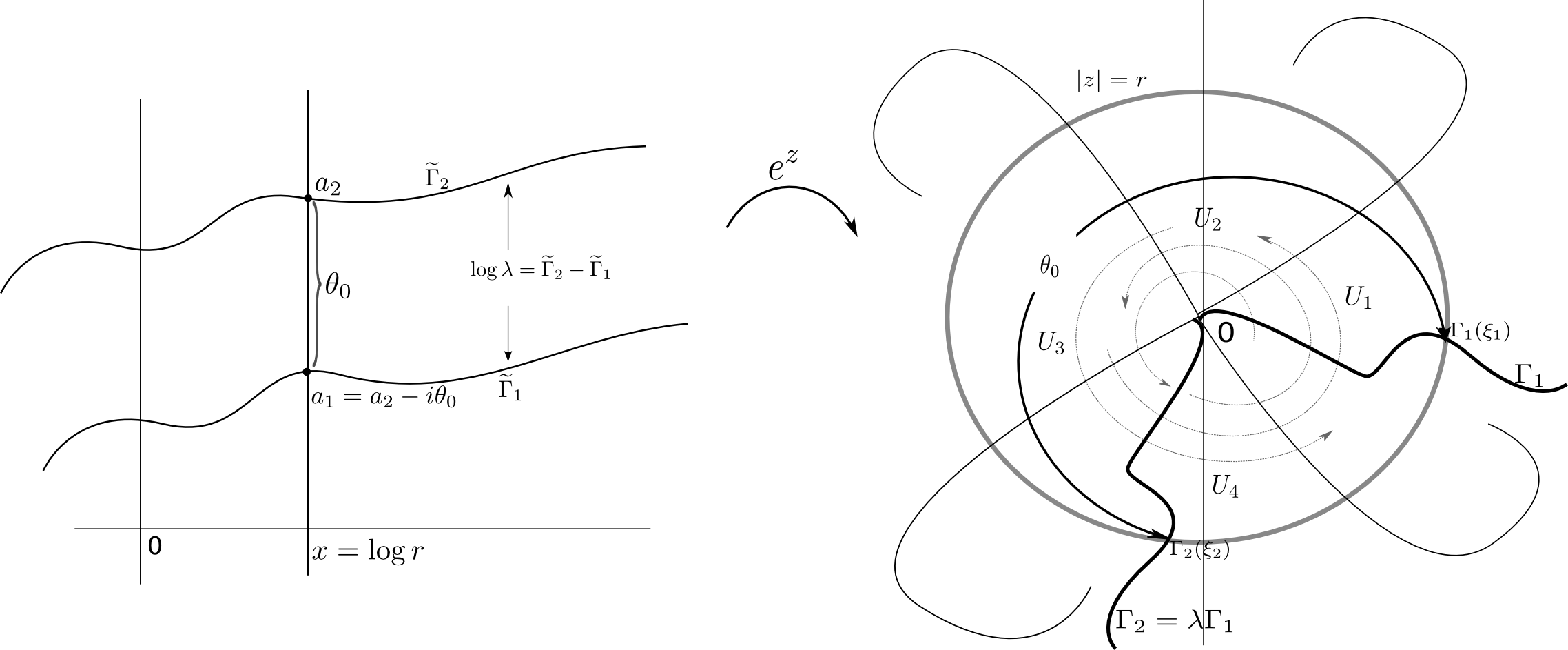}
\caption{}
\end{figure}
\begin{proof}[Proof of Theorem 1.5]
The connected components of $\mbb C \setminus \ti{K}$ can be decomposed into $N$ cycles. Choose one component from each of these cycles and let the resulting collection be relabelled as
${U}_1,{U}_2,\hdots ,{U}_N.$
\par\medskip\no
Define 
\[v(t)=\max\{u^+(t)-1,0\}.\]
Let $$D=\{t \in \mbb C: v(t)>0\}$$
and then define $D_j=D\cap U_j$ for every $1 \le j\le N.$
\par\medskip\no
Since $\ti{K}$ is bridged with an unbounded component containing the origin, all branches of the logarithm are well defined on $\mbb C \setminus \ti{K}$

\par
\medskip
\noindent
Let $\gamma_1:[0,1]$ be a periodic access  to origin in $U_1$ as constructed in Corollary \ref{4cor}. Extend it to $[0,\infty)$ in the following way
\begin{align*}
\Gamma_1=\left\{
           \begin{array}{ll}
             \gamma_1,&\hbox{if $0 \le \xi \le 1$;} \\
             (\la^q)^n \gamma_1({\xi}/{2^n}),&\hbox{for $n \ge 1 $ and $2^{n-1} \le \xi \le 2^n$.}
           \end{array}
         \right.
\end{align*}
Let $\Gamma_2 = \la \Gamma_1 \subset U_{1+p'}$. Choose $r>0$, and let 
\[\xi_1=\min\{\xi: |\Gamma_1(\xi)|=r\}\]
\[\xi_2=\min\{\xi: |\Gamma_2(\xi)|=r\}.\]
Let $$\theta_0=\arg(\Gamma_2(\xi_2)/\Gamma_1(\xi_1))$$ where $0 \ge \theta_0 <2\pi.$
Fix $a_2 \in \mbb C$ such that $e^{a_2}=\Gamma_2(\xi_2).$ Lift $\Gamma_2$ to a path $\ti{\Gamma}_2:(0,\infty) \to \mbb C$ such that $\ti{\Gamma}_2(\xi_2)=a_2$ using the covering of $\mbb C\setminus \{0\}$
by the exponential map.
This correspond to choosing a branch of the logarithm such that $a_2=\log \Gamma_2(\xi_2).$
Let \[a_1=a_2-i \theta_0.\]
Since $\theta_0 \in [0,2\pi]$, we may use the same branch of the logarithm to lift $\Gamma_1$ to a path
$\ti{\Gamma}_1:(0,\infty)\to \mbb C$ such that $\ti{\Gamma}_1(\xi_1)=a_1.$ Now
\[e^{\ti{\Gamma}_2(\xi)}=\Gamma_2=\la \Gamma_1=\la e^{\ti{\Gamma}_1(\xi)}. \]
This shows that 
\[e^{\ti{\Gamma}_2(\xi)-\ti{\Gamma}_1(\xi)} = \la\] and we may now define
\[\tau =\log \la = \ti{\Gamma}_2(\xi)-\ti{\Gamma}_1(\xi).\]
This incidentally shows that the lift $\ti{\Gamma}_2$ is a translate of $\ti{\Gamma}_1.$
We will henceforth work with this choice of the logarithm.
\par\medskip\no
Define $D_j'=\log D_j$ for $1 \le j \le N$
and let $f(s)=s+q \tau-2\pi ip.$ 
\par\medskip\no 
{\it Claim:} If $s \in D_j'$ then $f(s) \in D_j'.$
\par\medskip\no
Suppose the ordering in the components is anticlockwise, i.e., $0<\theta_0 <2\pi.$ Let $s \in D_j'$. Then $e^s \in D_j$ and therefore $$u^+(\la^q e^s)=d^q u^+(e^s)>1.$$ Hence 
$$\la^q e^s=e^{\tau q+s} \in D_j.$$ But observe that $p'q=q'p$ which means that a  point $s \in D_j$ completes $p$-cycles around the origin to get back to $D_j$, i.e., $\tau q+s \in D_j'+2\pi ip.$ Thus the claim.
\par\medskip\noindent
Note that $\Re \tau=\log |\la| >0$ since $|\la|>1.$ This shows that
\begin{align}{\label{4d}}
\Re f(s)>\Re s. 
\end{align}
\par\medskip\noindent
{\it Claim:} For every $1 \le j\le N$, the domains $D_j'$'s are distributed along the direction $\tau-2 \pi i p/q$, i.e., for every point $z \in D_j'$ the distance of $z$ from the line through the origin in the direction $\tau-2 \pi i p/q$ is bounded. 
\par\medskip\noindent
Suppose $s_j \in D_j'$. Let $D_j'=D^1_j \cup D^2_j$ where 
\[D^1_j=\Big\{z \in D_j': \Re z \le \Re s_j\Big \}\]
and 
\[D^2_j=D_j'\setminus D_j^1.\]
Since the origin is not a limit point of the $D_j$'s it follows that $$m_j=\min\{\Re z: z \in \overline{D_j'} \} > -\infty.$$ Let $a_j \in \overline{D_j'}$ be such that $\Re a_j=m_j.$ Then there exists a curve in $\overline{D_j'}$ joining $a_j$ and $s_j$. Call it $\beta_j^1.$ Define
\[[{\beta}_j^1]=\Big\{z \in \mbb C: z \in (\beta_j^1(t)-2\pi i,\beta_j^1(t)+2\pi i), 0\le t\le 1\Big\}.\]
Observe that a circle around the origin is mapped to segment of length $2 \pi$ parallel to the $y$-axis by the branch of the $\log$ function defined above. 
So $D_j^1 \subset [{\beta}_j^1]$ and hence it is bounded, with a finite distance from the line $(\tau -2i \pi p/q)t, ~t \in(0,\infty).$ 

\par\medskip\no
If $s_j \in D_j^2 \subset D_j'$ then from (\ref{4d})$$f(s_j)=s_j+q (\tau-i2\pi p/q) \in D_j^2 \subset D_j'.$$ There exists a curve say $\beta_j^2 : [0,1] \rightarrow D'_j$ 
connecting $s_j$ and $f(s_j).$ The curve $\beta_j^2$ can be extended to $[0,\infty)$ in the following way:
\begin{align*}
\beta_j^2(\xi)=\left\{
           \begin{array}{ll}
             \beta_j^2,&\hbox{if $0 \le \xi \le 1$;} \\
             f^n \circ \beta_j^2(\xi-n),&\hbox{if $n\le \xi \le n+1$ for some $n \ge 1 $.}
           \end{array}
         \right.
\end{align*} 
Observe that for any point $s \in \mbb C$ the distance of $s$ and $f(s)$ from the line $(\tau -2i \pi p/q)t, ~t \in(0,\infty)$ is the same. So
$\beta_j^2$ is distributed along the direction of $\tau -2i \pi p/q.$ Now define
\[[{\beta}_j^2]=\Big\{z \in \mbb C: z \in (\beta_j^2(t)-2\pi i,\beta_j^2(t)+2\pi i), t \in [0,\infty)\Big\}.\] 
Then $D_j^2 \subset [{\beta}_j^2].$ Hence the claim.
\par\medskip\no 
{\it Claim:} There exists $C>0$ such that
\begin{align}\label{4e}
\bigg \vert{\Re s-\frac{\Re \tau}{|\tau-2i \pi p/q |}|s|}\bigg \vert < C
\end{align}
for $s \in \bigcup D_j'.$
\par\medskip\no
Let $$\alpha=\cos^{-1}\bigg(\frac{\Re \tau}{|\tau-2i \pi p/q|}\bigg).$$
Note that if $s \in \bigcup D_j'$ then $|s||\sin(\theta-\alpha)|$ is the distance of the point from the line $(\tau -2i \pi p/q)t, ~t \in(0,\infty)$, and by the previous claim there exist $M>0$ such that
\[|s||\sin(\theta-\alpha)|< M\]
where $$\theta=\cos^{-1}\bigg(\frac{\Re s}{|s|}\bigg).$$
Since the $D_j'$s are unbounded domains, $\sin(\theta-\alpha) \to 0$ as $|s| \to \infty$, i.e., if $|s|>2M$ then 
\[|\sin(\theta-\alpha)|<1/2\]
which means 
$$\theta-\alpha \in [-{\pi}/{6} ,{\pi}/{6} ] \subset( -{\pi}/{2},{\pi}/{2}).$$
On $( -{\pi}/{2},{\pi}/{2})$ 
\[|{2t}/{\pi}|\le|\sin t| \le |t| .\]
This shows that when $|s|>2M,$
\begin{align*}
|s|\bigg \vert{\frac{\Re s}{|s|}-\frac{\Re \tau}{|\tau-2i \pi p/q |}}\bigg \vert  
=~&2|s| \bigg|\sin\bigg(\frac{\theta-\alpha}{2}\bigg)\bigg|\bigg|\sin\bigg(\frac{\theta+\alpha}{2}\bigg)\bigg|\\
<&~2|s|\bigg|\frac{\theta-\alpha}{2}\bigg|\bigg|\sin\bigg(\frac{\theta+\alpha}{2}\bigg)\bigg| \\
<&~2|\pi||s||\sin(\theta-\alpha)|\bigg|\sin\bigg(\frac{\theta+\alpha}{2}\bigg)\bigg|<+\infty.
\end{align*}
When $|s|\le 2M$, (\ref{4e}) follows just from the triangle inequality.
\par\medskip\no
For a path $\sigma$ in $\mbb C$ let ${\rm len}(\sigma)$ denote the length of $\sigma.$ Since $\la^n U_j=U_{(j+np'){\rm mod}~ q'}$, it follows from the above calculation that
\begin{align}{\label{4f}}
{\rm len}\bigg( \{\Re s=\xi\}\cap \bigcup_{j=1}^N \bigcup_{n=0}^{q-1} \log(\la^n U_j)\bigg) \le 2 \pi.
\end{align}
Now note that 
\begin{align*}
{\rm len}\bigg( \{\Re s=\xi\}\cap \bigcup_{j=1}^N  \log( U_j)\bigg)=&{\rm len}\bigg( \{\Re s=\xi+\Re \tau\}\cap \bigcup_{j=1}^N  \log( \la U_j)\bigg) \\
=&{\rm len}\bigg( \{\Re s=\xi+2\Re \tau\}\cap \bigcup_{j=1}^N  \log( \la^2 U_j)\bigg) \\
& \vdots\\
=&{\rm len}\bigg( \{\Re s=\xi+(q-1)\Re \tau\}\cap \bigcup_{j=1}^N  \log( \la^{q-1} U_j)\bigg).
\end{align*}
Combining this with (\ref{4f}), we get
\begin{align*}
2\pi q \Re \tau  \ge &\int\limits_{\xi}^{\xi+q \Re \tau}{\rm len}\bigg( \{\Re s=\xi\}\cap \bigcup_{j=1}^N \bigcup_{n=0}^{q-1} \log(\la^n U_j)\bigg) d\xi \\
=&\int\limits_{\xi}^{\xi+q \Re \tau}{\rm len}\bigg( \{\Re s=\xi\}\cap \bigcup_{j=1}^N  \log(U_j)\bigg) d\xi +\\&\int\limits_{\xi-\Re \tau}^{\xi+(q -1)\Re \tau}{\rm len}\bigg( \{\Re 
s=\xi\}\cap \bigcup_{j=1}^N \log(U_j)\bigg) d\xi +\\
&\vdots \\ +&\int\limits_{\xi-(q-1)\Re \tau}^{\xi+\Re \tau}{\rm len}\bigg( \{\Re s=\xi\}\cap \bigcup_{j=1}^N  \log(U_j)\bigg) d\xi ,
\end{align*}
i.e.,
\begin{align}\label{4g}
2\pi q \Re \tau  \ge \sum_{n=0}^{q-1}\int\limits_{\xi-n \Re \tau}^{\xi+(q-n) \Re \tau}{\rm len}\bigg( \{\Re s=\xi\}\cap \bigcup_{j=1}^N  \log( U_j)\bigg) d\xi .
\end{align}
Since the $\log (U_j)$'s are invariant under the map $s \mapsto f(s)=s+q\tau-2 i\pi p $ we have
\begin{align*}
{\rm len}\bigg( \{\Re s=\xi\}\cap \bigcup_{j=1}^N  \log( U_j)\bigg)={\rm len}\bigg( \{\Re s=\xi+q \Re \tau\}\cap \bigcup_{j=1}^N  \log( U_j)\bigg).
\end{align*}
So for any $n$, $1 \le n \le {q-1} $ 
\begin{align}\label{4h}
\int\limits_{\xi-n \Re \tau}^{\xi}{\rm len}\bigg( \{\Re s=\xi\}\cap \bigcup_{j=1}^N  \log(\la^n U_j)\bigg) d\xi =\int\limits_{\xi+(q-n) \Re \tau}^{\xi+q \Re \tau}{\rm len}\bigg( \{\Re 
s=\xi\}\cap \bigcup_{j=1}^N  \log( U_j)\bigg) d\xi.
\end{align}
From (\ref{4g}) and (\ref{4h}) we get that
\begin{align*}
2\pi q \Re \tau  &\ge q\int\limits_{\xi}^{\xi+q \Re \tau}{\rm len}\bigg( \{\Re s=\xi\}\cap \bigcup_{j=1}^N  \log( U_j)\bigg) d\xi  \\
&\ge  q\int\limits_{\xi}^{\xi+q \Re \tau}{\rm len}\bigg( \{\Re s=\xi\}\cap \bigcup_{j=1}^N  D_j'\bigg) d\xi.
\end{align*}
Therefore
\[\frac{1}{q \Re \tau}\int\limits_{\xi}^{\xi+q \Re \tau}{\rm len}\bigg( \{\Re s=\xi\}\cap \bigcup_{j=1}^N  D_j'\bigg) d\xi \le \frac{2\pi}{q}.\]
\medskip\no 
Define subharmonic functions $\{w_j\}_{j=1}^N$ as follows:
\begin{align*}
w_j(s)=\left\{
           \begin{array}{ll}
             v(e^s),&\hbox{if $s \in D_j'$;} \\
             0,&\hbox{otherwise.}
           \end{array}
         \right.
\end{align*}
For each $1 \le j \le N$, define a function $\theta_j(r):(0, \infty) \to [0, \infty]$ as follows. Restrict $w_j$ to $S_r$, the circle of radius $r$ around the origin and let $N_j(r) 
\subset S_r$ be the collection of of those points $re^{i \psi}$ such that $w_j$ has a positive lower bound on the arc $(re^{i (\psi+\ep)},re^{i (\psi-\ep)})$ for some $\ep>0.$ 
Let $s \in \overline{N_j(r)}$. Then
\begin{align*}
\theta_j(r)=\left\{
           \begin{array}{ll}
             0,&\hbox{if $w_j(s)> 0$ for some $s \in S_r  \setminus N_j(r) $;} \\
             \infty, &\hbox{if $S_r \subset N_j(r)$ or $w_j|_{S_r}\equiv 0$;}
           \end{array}
         \right.
\end{align*}
else
\begin{align*}
\theta_j(r)= \frac{1}{r}\left(\mbox{maximum length of the components of } N_j(r) \cap S_r \right)
\end{align*}
otherwise.
Choose $r_0$ such that if $r >r_0$, $\theta_j(r) < \infty $ for every $1 \le j \le N.$
Let $r$ be such that $r_0< \ka r$ where $1/e \le \ka < 1.$ Then it follows from Tsuji's inequality that
\begin{align*}
\log \max_{|t|=r} w_j(t) \ge \pi \int\limits_{r_0}^{\ka r}\frac{dr}{r\theta_j(r)} +\log \max_{|t|=\ka r_0}w_j(t)+ 1/6 \;\log (1-\ka)^{3/2} 
\end{align*}
and hence
\begin{align}\label{4k}
\sum_{j=1}^n \log \max_{|t|=r} w_j(t) \ge \sum_{j=1}^n \pi \int\limits_{r_0}^{\ka r} \frac{dr}{r\theta_j(r)} -{\rm const}.
\end{align}
\no 
The Cauchy--Schwarz inequality shows that
\[N^2=\bigg(\sum_{j=1}^N \frac{\sqrt{r\theta_j(r)}}{\sqrt{r\theta_j(r)}}\bigg)^2 \le
\bigg(\sum_{j=1}^N r \theta_j(r)\bigg) \bigg(\sum_{j=1}^N \frac{1}{r \theta_j(r)}\bigg) \]
and
\[(\ka r-r_0)^2=\bigg(\int\limits_{r_0}^{\ka r} \frac{\sqrt{r\theta_j(r)}}{\sqrt{r\theta_j}} dr\bigg)^2 \le
\bigg(\int\limits_{r_0}^{\ka r} r\theta_j(r)\bigg) \bigg(\int\limits_{r_0}^{\ka r} \frac{1}{r \theta_j(r)}\bigg) .\]
Therefore
\begin{align}\label{4i}
\sum_{j=1}^N \int\limits_{r_0}^{\ka r} \pi \frac{dr}{r \theta_j(r)} \ge \pi N^2 \int\limits_{r_0}^{\ka r} \frac{dr}{\sum_{j=1}^N r \theta_j(r)} \ge \frac{\pi N^2 (\ka 
r-r_0)^2}{\int\limits_{r_0}^{\ka r} \sum_{j=1}^N r\theta_j(r)dr}.
\end{align}
Since the domains $D'_j$ are directed along $\tau -2 \pi i p/q$, for sufficiently large $r>0$, the area of $\bigcup_{j=1}^N D_j' $ upto radius $r$ is bounded above by the area of 
$\bigcup_{j=1}^N D_j'$ upto the real coordinate $({\Re \tau}/{|\tau -2\pi i p/q|})r$. See the Figure 5 for details.
\begin{figure}[H]\label{fig6}
\includegraphics[height=3in,width=3in]{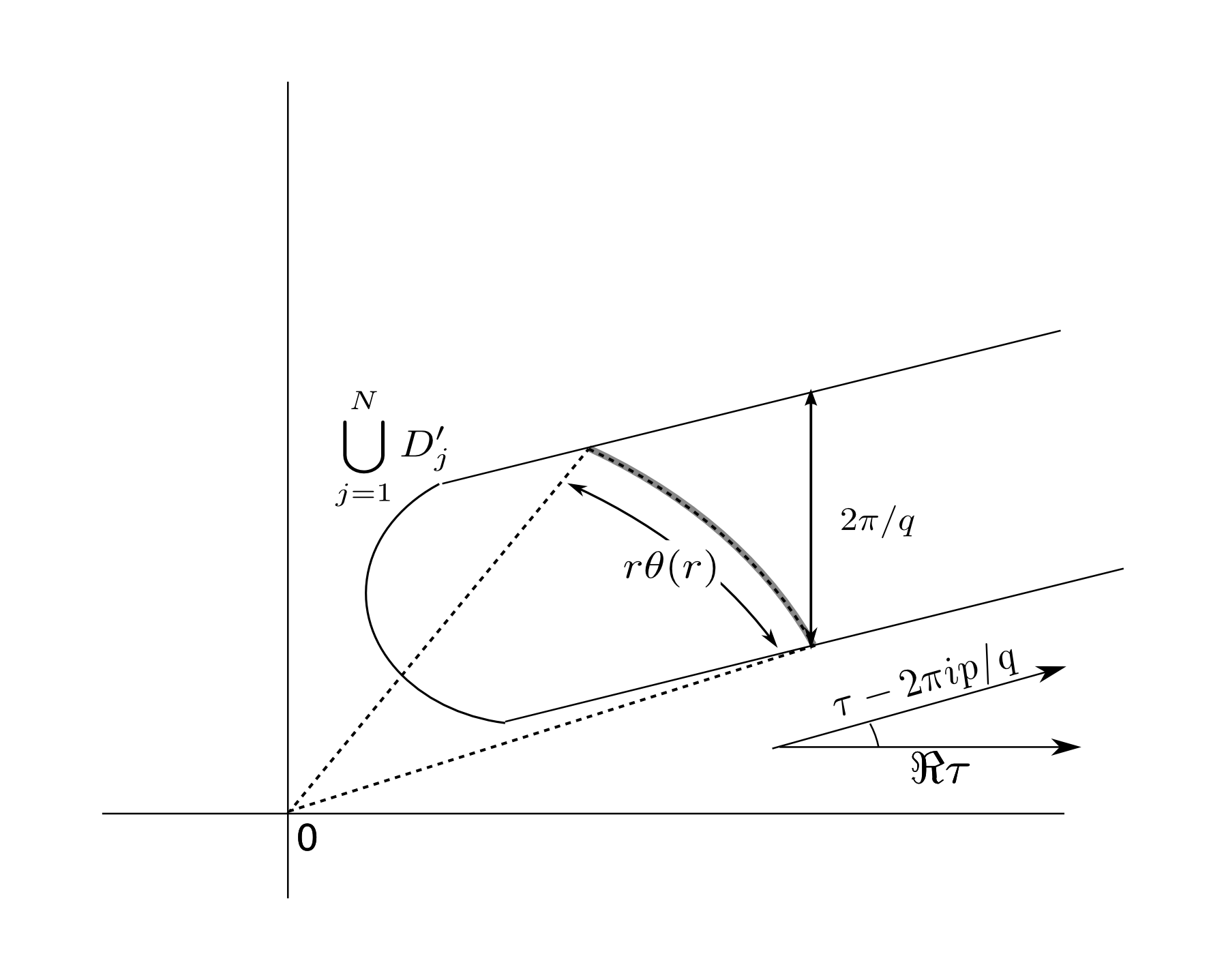}
\caption{}
\end{figure}
\par\medskip\no 
Therefore from (\ref{4g}) it follows that 
\begin{align*}
\int\limits_{r_0}^{\ka r} \sum_{j=1}^N r \theta_j(r)dr &\le \int\limits_0^{\frac{\Re \tau}{|\tau-2\pi i p/q|}\ka r} {\rm len}\bigg(\{\Re s =\xi \cap \bigcup_{j=1}^N D_j'\} \bigg)d\xi + 
{\rm const}  \\
& \le \frac{2 \pi }{q}\frac{\Re \tau}{|\tau-2\pi i p/q|}\ka(r+{\rm const})
\end{align*}
By substituting the above inequality in (\ref{4i}) we get
\begin{align}\label{4j}
\sum_{j=1}^N \pi \int\limits_{r_0}^{\ka r}  \frac{dr}{r \theta_j(r)} \ge \frac{N^2q| \tau-2 \pi i p/q|}{2 \Re \tau} \frac{(\ka r-r_0)^2}{\ka (r +{\rm const.})}.
\end{align}
Since $|e^s| = e^{\Re s}$, the estimate on the left side of (\ref{4k}) can be modified as
\begin{align}\label{4l}
\sum_{j=1}^N \log \max_{|s|=r}w_j(s) &\le N \log \max_{s \in \bigcup D_j', |s|=r}  v(e^s) \nonumber \\
& \le \max_{s \in \bigcup D_j', |s|=r} N \Re s 
\bigg(\frac{\log |u^+(e^s)|}{\log |e^s|}\bigg).
\end{align}
So from (\ref{4k}), (\ref{4j}) and (\ref{4l}) we get 
\begin{align*}
\max_{s \in \bigcup D_j', |s|=r} N \Re s 
\bigg(\frac{\log |u^+(e^s)|}{\log |e^s|}\bigg) \ge \frac{N^2q| \tau-2 \pi i p/q|}{2 \Re \tau} \frac{(\ka r-r_0)^2}{\ka (r +{\rm const.})} -{\rm const.}
\end{align*}
and by dividing both sides by $r$, we get
\begin{align}\label{4m}
\max_{s \in \bigcup D_j', |s|=r} N \frac{\Re s}{|s|}
\frac{\log |u^+(e^s)|}{\log |e^s|} \ge \frac{N^2q| \tau-2 \pi i p/q|}{2 \Re \tau} \frac{(\ka r-r_0)^2}{\ka r(r +{\rm const.})} -\frac{{\rm const.}}{r}.
\end{align}
Earlier we have proved that 
\begin{align*}
\bigg \vert{\Re s-\frac{\Re \tau}{|\tau-2i \pi p/q |}|s|}\bigg \vert
\end{align*}
is bounded, i.e.,
\begin{align*}
\lim_{|s|\to \infty}\frac{\Re s}{|s|}=\frac{\Re \tau}{|\tau-2i \pi p/q |}.
\end{align*}
By taking the limit as $r \to \infty$ in (\ref{4m}) we get
\begin{align}
N\frac{\Re \tau}{|\tau-2i \pi p/q |} {\rm ord}u^+ \ge \frac{\ka N^2q| \tau-2 \pi i p/q|}{2 \Re \tau}.
\end{align}
Since ${\rm ord}u^+=\log d / \Re \tau$ and $1/e \le \ka < 1$ is arbitrary we get
\begin{align*}
\frac{N \log d}{|\tau-2i \pi p/q |} \ge \frac{ N^2q| \tau-2 \pi i p/q|}{2 \Re \tau}
\end{align*}
which reduces to the Yoccoz inequality.
\end{proof}

\section{Appendix}
\no
For a given $\delta > 0$ and real numbers $a_1,a_2,\hdots,a_k$ define a family of subsets $X(i_2,i_3,\hdots ,i_{k-2}) \subset \mbb{C}^k $ where $i_j \in\{0,1\}$ for every $j \in \{2,\hdots,k-2\}$ as :
\[
 X(i_2,i_3,\hdots,i_{k-2}) =\Big\{z\in \mbb{C}^k:\Re\pi_{j-1}(z) \leqslant a_{j-1}-\delta \;\mbox{if}\;i_j=0 \;\mbox{else}\; \Re\pi_{j-1}(z) \geqslant a_{j-1}+\delta \;\mbox{if} \;i_j=1 \Big\}.
\]
\no
Note that $X(i_2,i_3,\hdots,i_{k-2})$ is non-empty provided $k>3$.

\medskip
\begin{thm}\label{thm1}
Let $\rho(z)=\min\Big\{|\pi_1(z)|,\hdots,|\pi_p(z)|\Big\}$. Let $\lambda_1,\lambda_2,\hdots,\lambda_k,a_1,a_2,\hdots,a_k$ be real constants and let $\delta>0,~\eta>0.$
Define a family of maps $\alpha_j:\{0,1\} \to \{0,\lambda_j\}$ as $\alpha_j(0)=0~,\alpha_j(1)=\lambda_j$ for all $j=1,2,\hdots,k.$ Then there
exists a map $\theta \in Aut_1(\mbb{C}^k)$ such that
\begin{enumerate}
 \item [(i)]$|\theta(z)-z+(\alpha_1(1),\alpha_2(i_2),\hdots,\alpha_{k-2}(i_{k-2}),\alpha_{k-1}(1),\alpha_k(1))|<\eta e^{-\rho(z)^{1/4}}$ whenever
 \[
 z \in X(i_2,i_3,\hdots,i_{k-2}) \cap \Big\{z \in \mbb{C}^k: \Re \pi_{k-2}(z)\geqslant a_{k-2}+\delta,~\Re \pi_{k-1}(z)\geqslant a_{k-1}+\delta,~\Re \pi_{k}(z)\geqslant a_{k}+\delta\Big\}.
 \]
 \medskip
 \item [(ii)]$|\theta(z)-z+(\alpha_1(1),\alpha_2(i_2),\hdots,\alpha_{k-2}(i_{k-2}),\alpha_{k-1}(0),\alpha_k(1))|<\eta e^{-\rho(z)^{1/4}}$ whenever
     \[
 z \in X(i_2,i_3,\hdots,i_{k-2}) \cap \Big\{z \in \mbb{C}^k: \Re \pi_{k-2}(z)\leqslant a_{k-2}-\delta,~\Re \pi_{k-1}(z)\geqslant a_{k-1}+\delta,~\Re \pi_{k}(z)\geqslant a_{k}+\delta\Big\}.
 \]
 \medskip
 \item [(iii)]$|\theta(z)-z+(\alpha_1(0),\alpha_2(i_2),\hdots,\alpha_{k-2}(i_{k-2}),\alpha_{k-1}(1),\alpha_k(1))|<\eta e^{-\rho(z)^{1/4}}$ whenever
 \[z \in X(i_2,i_3,\hdots,i_{k-2}) \cap \Big\{z \in \mbb{C}^k: \Re \pi_{k-2}(z)\geqslant a_{k-2}+\delta,~\Re \pi_{k-1}(z)\geqslant a_{k-1}+\delta,~\Re \pi_{k}(z)\leqslant a_{k}-\delta\Big\}.\]
 \medskip
 \item [(iv)]$|\theta(z)-z+(\alpha_1(0),\alpha_2(i_2),\hdots,\alpha_{k-2}(i_{k-2}),\alpha_{k-1}(0),\alpha_p(1))|<\eta e^{-\rho(z)^{1/4}}$ whenever
 \[z \in X(i_2,i_3,\hdots,i_{k-2}) \cap \Big\{z \in \mbb{C}^k: \Re\pi_{k-2}(z)\leqslant a_{k-2}-\delta,~\Re \pi_{k-1}(z)\geqslant a_{k-1}+\delta,~\Re\pi_{k}(z)\leqslant a_{k}-\delta\Big\}.\]
 \medskip
 \item [(v)]$|\theta(z)-z+(\alpha_1(1),\alpha_2(i_2),\hdots,\alpha_{k-2}(i_{k-2}),\alpha_{k-1}(1),\alpha_k(0))|<\eta e^{-\rho(z)^{1/4}}$ whenever
\[z \in X(i_2,i_3,\hdots,i_{k-2}) \cap \Big\{z \in \mbb{C}^k: \Re\pi_{k-2}(z)\geqslant a_{k-2}+\delta,~\Re \pi_{k-1}(z)\leqslant a_{k-1}-\delta,~\Re\pi_{k}(z)\geqslant a_{k}-\lambda_k+\delta\Big\}.\]
\medskip
 \item [(vi)]$|\theta(z)-z+(\alpha_1(1),\alpha_2(i_2),\hdots,\alpha_{k-2}(i_{k-2}),\alpha_{k-1}(0),\alpha_k(0))|<\eta e^{-\rho(z)^{1/4}}$ whenever
 \[z \in X(i_2,i_3,\hdots,i_{k-2}) \cap \Big\{z \in \mbb{C}^k: \Re\pi_{k-2}(z)\leqslant a_{k-2}-\delta,~\Re \pi_{k-1}(z)\leqslant a_{k-1}-\delta,~\Re\pi_{k}(z)\geqslant a_{k}-\lambda_k+\delta\Big\}.\]
 \medskip
 \item [(vii)]$|\theta(z)-z+(\alpha_1(0),\alpha_2(i_2),\hdots,\alpha_{k-2}(i_{k-2}),\alpha_{k-1}(1),\alpha_k(0))|<\eta e^{-\rho(z)^{1/4}}$ whenever
 \[z \in X(i_2,i_3,\hdots,i_{k-2}) \cap \Big\{z \in \mbb{C}^k: \Re\pi_{k-2}(z)\geqslant a_{k-2}+\delta,~\Re \pi_{k-1}(z)\leqslant a_{k-1}-\delta,~\Re\pi_{k}(z)\leqslant a_{k}-\lambda_k-\delta\Big\}.\]
 \medskip
 \item [(viii)]$|\theta(z)-z+(\alpha_1(0),\alpha_2(i_2),\hdots,\alpha_{k-2}(i_{k-2}),\alpha_{k-1}(0),\alpha_k(0))|<\eta e^{-\rho(z)^{1/4}}$ whenever
 \[z \in X(i_2,i_3,\hdots,i_{k-2}) \cap \Big\{z \in \mbb{C}^k: \Re\pi_{k-2}(z)\leqslant a_{k-2}-\delta,~\Re \pi_{k-1}(z)\leqslant a_{k-1}-\delta,~\Re\pi_{k}(z)\leqslant a_{k}-\lambda_k-\delta\Big\}.\]
\end{enumerate}
\end{thm}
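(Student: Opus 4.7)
The proof parallels the $k=3$ construction of Theorem \ref{thm1a} step by step. First, I would invoke the Arakelian--Gauthier theorem to build, for each $n \gg 1$, entire functions $f_n^1, f_n^2, \ldots, f_n^k$ on $\mbb C$ whose step-like behavior is prescribed on two disjoint closed half-planes: for $2 \le j \le k$, the separating threshold is $a_{j-1} \pm \delta/2$; for $j = 1$, it is the shifted value $(a_k - \lambda_k) \pm \delta/2$. On the ``positive'' half-plane $f_n^j$ is within $n^{-1} e^{-|z|^{1/4}}$ of an appropriate constant (chosen so the final translations $\alpha_j(\cdot)$ come out correctly), and on the ``negative'' half-plane it is within the same error of $0$. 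In each case the complement of the approximation set is connected and locally connected at $\infty$, so the hypotheses of \cite{AG} are satisfied.

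Second, I would define $\theta_n(z_1, \ldots, z_k) = (v_1, \ldots, v_k)$ by the cascade
\[
v_j = z_j + f_n^j(z_{j-1}) \quad (2 \le j \le k), \qquad v_1 = z_1 + f_n^1(v_k).
\]
A direct computation shows that $\theta_n$ factors as the inverse of a composition of $k$ type-$1$ shift-like maps of the form $S_j(z_1, \ldots, z_k) = (z_2, \ldots, z_k, z_1 - f_n^j(z_k))$, so $\theta_n \in {\rm Aut}_1(\mbb C^k)$; an inductive Jacobian calculation gives $\det D\theta_n \equiv 1$. The only ``backward'' coupling in the cascade is of $v_1$ on $v_k$, precisely as in the $k=3$ case.

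Third, I would verify the eight estimates. Fix, say, case (i) together with a choice of $(i_2, \ldots, i_{k-2}) \in \{0, 1\}^{k-3}$. For each $2 \le j \le k$, the sign hypothesis on $\Re z_{j-1}$ (read off from $i_j$ for $j \le k-2$, and from the case label for $j \in \{k-1, k\}$) places $z_{j-1}$ in one of the two half-planes where $f_n^j(z_{j-1})$ is well-approximated, yielding $|v_j - z_j + \alpha_j(i_j)| \le \eta e^{-\rho(z)^{1/4}}$ for $n$ sufficiently large. In particular $|v_k - (z_k - \alpha_k(\cdot))| \le \eta < \delta/2$, so $\Re v_k$ lies in the correct half-plane relative to $(a_k - \lambda_k) \pm \delta/2$; this is exactly where the shifted threshold in the construction of $f_n^1$ pays off, producing the required estimate for $v_1$.

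The main obstacle is converting the decay rate $e^{-|v_k|^{1/4}}$ that naturally appears in the $v_1$-estimate into the desired $e^{-\rho(z)^{1/4}}$. Using $|v_k| \ge |z_k| - |\lambda_k| - 1$ for $n$ large, together with the asymptotic $t^{1/4} - (t - C)^{1/4} \to 0$ as $t \to \infty$, one obtains an absolute constant $C > 0$ with $e^{-|v_k|^{1/4}} \le C e^{-|z_k|^{1/4}} \le C e^{-\rho(z)^{1/4}}$; then choosing $n$ so large that $C/n < \eta$ closes the bound. Cases (ii)--(viii) are handled by the same cascade, the only change being which of the $f_n^j(z_{j-1})$ values land near $-\alpha_j(i_j)$ versus $0$; the shifted threshold $a_k - \lambda_k$ (as opposed to $a_k$) is precisely the device that distinguishes the four ``upper'' cases (i)--(iv) from the four ``lower'' cases (v)--(viii).
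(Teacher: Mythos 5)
Your proposal is correct and matches the paper's own argument essentially step for step: the same Arakelian--Gauthier construction of the $f_n^j$ with the shifted threshold $a_k-\lambda_k$ for $f_n^1$, the same cascade definition of $\theta_n$ with the single backward coupling $v_1 = z_1 + f_n^1(v_k)$, and the same conversion of $e^{-|v_k|^{1/4}}$ into $Ce^{-|z_k|^{1/4}} \le Ce^{-\rho(z)^{1/4}}$. The only cosmetic point is that (like the paper) you implicitly need $\eta < \delta/2$ in the step placing $\Re v_k$ in the correct half-plane, which is harmless since shrinking $\eta$ only strengthens the conclusion.
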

\begin{proof}
As in Section 2, we use the Arakelian--Gauthier theorem to establish the existence of entire functions with the following properties:
\begin{align*}
 \mbox{(i)} \hspace{10 mm} &|f_n^1(z)-\lambda_1| \leqslant\frac{1}{n}
e^{-|z|^{1/4}} <\eta e^{-|z|^{1/4}} ~~\mbox{if}~~\Re z\geqslant
a_k-\lambda_k+\delta/2 ,~\mbox{and} \\
&|f_n^1(z)-0| \leqslant \frac{1}{n} e^{-|z|^{1/4}}<\eta
e^{-|z|^{1/4}}~~\mbox{if}~~ \Re z\leqslant a_k-\lambda_k-\delta/2. &\\
\mbox{(ii)} \hspace{10 mm} &|f_n^j(z)-\lambda_n| \leqslant \frac{1}{n}
e^{-|z|^{1/4}} <\eta e^{-|z|^{1/4}}~~\mbox{if}~~\Re z\geqslant a_{j-1}+\delta/2
, ~\mbox{and}\\
&|f_n^j(z)-0| \leqslant \frac{1}{n} e^{-|z|^{1/4}} <\eta
e^{-|z|^{1/4}}~~\mbox{if}~~\Re z\leqslant a_{j-1}-\delta/2, ~~\mbox{where}~~
2 \le j \le k.
\end{align*}
\no
Define $\theta_n \in {\rm Aut}_1(\mbb C^k)$ as $\theta_n(z_1,z_2,\hdots,z_k)=(v_1,v_2,\hdots,v_k)$ as
\begin{align*}
v_1 &= z_1 + f_{n}^1(v_k),\\
v_2 &= z_2 + f_{n}^2(z_1), \\
&\vdots\\
v_k &= z_k + f_{n}^k(z_{k-1}).
\end{align*}
Pick $z \in \mbb{C}^k$ such that $z \in X(i_2,i_3,\hdots,i_{k-2})$ for $i_j \in \{0,1\}$ for every $1 \le j\le k-2$ and $\Re\pi_{k-2}(z)\ge a_j+\delta ,~\Re\pi_{k-1}(z)\ge a_j+\delta ,~\Re\pi_k(z)\geqslant
a_k-\lambda_k+\delta.$
Then $$|f_n^j(z_{j-1})-\alpha_j(i_{j})|=|\pi_j(\theta_n(z))-\pi_j(z)+\alpha_j(i_j)|< \eta
e^{-\rho(z)^{1/4}}$$
for $2 \le j \le k-2$ and
$$|f_n^j(z_{j-1})-\lambda_j|=|\pi_j(\theta_n(z))-\pi_j(z)+\lambda_j|< \eta
e^{-\rho(z)^{1/4}}$$ for $j=k-1,k.$ Therefore
$$\Re v_k\geqslant \Re z_k-\lambda_k-\eta\geqslant a_k-\lambda_k+\delta/2$$
and hence
$$|\pi_1(\theta_n(z))-\pi_1(z)+\lambda_1|=|f_n^1(v_k)-\lambda_1| <
e^{-|v_k|^{1/4}}/n.$$ 
Since $|v_k|>|z_k|-|\lambda_k|-1$ and $t^{1/4}-(t-\lambda_3
-1)^{1/4}\to 0$ as $t \to \infty$ there exists a constant $C>0$ such that
\[e^{-|v_k|^{1/4}}<C e^{-|z_k|^{1/4}}.\]
If we choose $n$ large enough so that
$C/n<\eta$ and $1/n<\eta$, then $\theta_n$ satisfies condition (i) of the theorem.
Similarly it can be shown that for large $n$, $\theta_n$ satisfies all the other
conditions.
\end{proof}

\no In analogy with Theorem \ref{thm1a} which has $2^3 = 8$ regions where the map $\theta$ has apriori prescribed behaviour, one would imagine that there should be $2^k$ regions in 
Theorem \ref{thm1} with specified behaviour for the map obtained there. However, the behaviour of that map can be summarised within the conditions (i) - (viii) because of its shift-like 
nature. The set $ X(i_2,i_3,\hdots,i_{k-2})$ is the region where the shift--like behaviour is similar for the coordinates $z_j$ with $1 \le j \le k-3$.

\end{document}